\newcommand{\rem}[1]{}
\def\thefigure{\thesection.\@arabic\c@figure}
\def\fps@figure{h, t}
\def\thetable{\thesection.\@arabic\c@table}
\def\fps@table{h, t}
\begin{document}

\newtheorem{theorem}{Theorem}[section]
\newtheorem{definition}[theorem]{Definition}
\newtheorem{lemma}[theorem]{Lemma}
\newtheorem{remark}[theorem]{Remark}
\newtheorem{proposition}[theorem]{Proposition}
\newtheorem{corollary}[theorem]{Corollary}
\newtheorem{example}[theorem]{Example}

\def\below#1#2{\mathrel{\mathop{#1}\limits_{#2}}}



\title{Invariant higher-order variational problems II}
\author{
Fran\c{c}ois Gay-Balmaz$^{1}$, Darryl D. Holm$^{2}$, David M. Meier$^{2}$, 
\\Tudor S. Ratiu$^{3}$, Fran\c{c}ois-Xavier Vialard$^{4}$
}
\addtocounter{footnote}{1}
\footnotetext{Laboratoire de M\'et\'eorologie Dynamique, \'Ecole Normale Sup\'erieure/CNRS, Paris, France. 
\texttt{gaybalma@lmd.ens.fr}
\addtocounter{footnote}{1} }
\footnotetext{Department of Mathematics, Imperial College, London SW7 2AZ, UK. 
\texttt{d.holm@ic.ac.uk, d.meier09@ic.ac.uk}
\addtocounter{footnote}{1}}
\footnotetext{Section de
Math\'ematiques and Bernoulli Center, \'Ecole Polytechnique F\'ed\'erale de
Lausanne,
CH--1015 Lausanne, Switzerland.
\texttt{tudor.ratiu@epfl.ch}
\addtocounter{footnote}{1}}
\footnotetext{Centre de Recherche en Math\'ematiques de la D\'ecision, Universit\'e Paris-Dauphine, Paris, France.
\texttt{fxvialard@normalesup.org}
\addtocounter{footnote}{1}}
\date{{\it Fondly remembering our late friend Jerry Marsden}\\
}
\maketitle

\makeatother
\maketitle




\begin{abstract} 
 Motivated by applications in computational anatomy, we consider a second-order problem in the calculus of variations on object manifolds that are acted upon by Lie groups of smooth invertible transformations. This problem leads to solution curves known as Riemannian cubics on object manifolds that are endowed with normal metrics. The prime examples of such object manifolds are the symmetric spaces. We characterize the class of cubics on object manifolds that can be lifted horizontally to cubics on the group of transformations. Conversely, we show that certain types of non-horizontal geodesics on the group of transformations project to cubics. Finally, we apply second-order Lagrange--Poincar\'e reduction to the problem of Riemannian cubics on the group of transformations. This leads to a reduced form of the equations that reveals the obstruction for the projection of a cubic on a transformation group to again be a cubic on its object manifold.
\end{abstract}
\newpage
\tableofcontents


\section{Introduction}\label{Intro-sec}

In this section, we summarize the main content of the paper, motivated by potential applications of first-order and higher-order trajectory planning problems in computational anatomy.
 
\subsection{General Background}
\paragraph{Geodesic matching in computational anatomy.} The new science of computational anatomy (CA) is concerned with quantitative  comparisons of shape, in particular the shapes of organs in the human body \cite{GrMi1998}. In CA, shapes are defined by spatial distributions of various types of geometric data structures, such as points (landmarks), spatially embedded curves or surfaces (boundaries), intensity, or density (regions), or tensors that encode  local  orientation of muscle fibers, etc.
 A fruitful approach in this burgeoning field applies the large deformation matching (LDM) method. In the LDM method, shapes are compared by measuring the relative deformation required to match one shape to another under  the action of the  diffeomorphism group \cite{DuGrMi1998, Tr1998}. This approach follows D'Arcy Thompson's inspired proposal to study comparisons of shapes by transforming one shape into another \cite{DThompson1942}. More specifically, it follows the Grenander deformable template paradigm \cite{Gr1993}. 

If a Lie group $G$ is equipped with an invariant metric, then its action on a smooth manifold $Q$ induces a metric on $Q$ called the \emph{normal metric}. If the diffeomorphism group is equipped with a right-invariant metric, then the normal metric associated with the group action is a natural choice for the metric on shape space. For discussions of normal metrics induced by actions of the diffeomorphism group on smooth manifolds in the context of computational anatomy, see e.g. \cite{Younes2009S40, You2010}. 

The objective in computational anatomy is to quantify the distance between two given shapes by computing the length of the geodesic path between them, with respect to the normal metric on shape space. This is equivalent to computing a (horizontal) geodesic path on the group of diffeomorphisms that carries one shape into the other. This \emph{horizontal lifting property of geodesics} has been crucial in the understanding and the numerics of LDM, and it has clarified the close connection between ideal fluid mechanics and image registration. Namely, geodesic flows on diffeomorphism groups are described by Euler--Poincar\'e equations, called \emph{EPDiff} equations in the diffeomorphism context \cite{HoMaRa1998,MaRa1994,Younes2009S40}. Geodesic flows on the subgroup of volume-preserving diffeomorphisms were famously  identified with Euler's equations for ideal incompressible fluid flow in \cite{Ar1966}. Paper \cite{MiTrYo2006} studies momentum conservation properties of geodesic EPDiff flows in the context of image matching. In particular, these geodesic flows are encoded by their initial momenta, and horizontality means that only momenta of a specific form are permitted. For example, landmark-based geodesic image matching naturally summons the singular momenta that were introduced as solitons for shallow water waves on the real line in \cite{CaHo1993} and then characterized as singular momentum maps in any number of  dimensions in \cite{HoMa2004}. We refer to \cite{HoRaTrYo2004, MiTrYo2006, CoHo2010, Younes2009S40, via09, BrFGBRa2010, FGBRa2011} for further details. 

In addition to their importance in computational anatomy, normal metrics and their geodesics also appear in a range of problems in mechanics and control theory, see for example \cite{BlCr1996, HoMa2004, NiAk2005}.

\paragraph{Trajectory planning and longitudinal studies in computational anatomy.} Longitudinal studies in computational anatomy seek to determine a path that interpolates optimally through a \emph{time-ordered series} of images, or shapes. This sort of task is  also familiar in data assimilation. Depending on the specific application, the interpolant will be required to have a certain degree of spatiotemporal smoothness. For example, the pairwise geodesic matching procedure can be extended to piecewise-geodesic interpolation through several shapes, as in \cite{BegKhanISBI08, DurrlemanEtAl2009}. If a higher degree of smoothness is required, then one must investigate higher-order interpolation methods.  

Higher-order interpolation methods on finite-dimensional spaces have been studied extensively in the context of trajectory planning in aeronautics, robotics, computer graphics, biomechanics and air traffic control. In particular, the study of Riemannian cubics in manifolds with curvature and their higher-order generalizations originated in \cite{GaKa1985}, \cite{NoHePa1989} and \cite{CrSL1995}. 
Riemannian cubics are solutions of Euler-Lagrange equations for a certain second-order variational problem in a finite-dimensional connected Riemannian manifold, to find a curve that interpolates between two points with given initial and final velocities, subject to  minimal mean-square covariant acceleration. 
The mathematical theory of Riemannian cubics was subsequently developed in a series of papers including \cite{CaSLCr1995, CrCaSLHamiltonian1998, CaSLCr2001, No2003, No2004, No2006, splinesanalyse, Krakowski2005} and Part I of the current study, \cite{Gay-BalmazEtAl2010}. The last reference treats Riemannian cubics on Lie groups by Euler--Poincar\'e reduction. Engineering applications are discussed in \cite{PaRa1997, ZeKuCr1998, HuBl2004, HuBl2004a, Noakes_Camera2003}, amongst others. Related higher-order interpolation methods have been studied, for example, in \cite{BeKu2000, HoPo2004, MaSL2004, NoPo2005,Noakes_SphericalSplines2006, MaSLeKr2010}. We refer to \cite{Popiel2007, MaSLeKr2010} and \cite{Noakes_SphericalSplines2006} for extensive references and historical discussions concerning Riemannian cubics, their higher-order generalizations, and related higher-order interpolation methods.

\subsection{Motivation} \label{Sec-Motivation}
In this paper, we consider the problem of Riemannian cubics for normal metrics, focusing on their lifting and projection properties. Recall that in the context of normal metrics one considers a manifold of objects, or shapes, that are acted upon by a Lie group of transformations. Two distinct interpolation strategies offer themselves. First, one may choose to define a variational principle on the Lie group, or indeed its Lie algebra, and find an optimal path $g(t)$ that transforms the initial shape $q$ as $q(t) = g(t) q$, such that $q(t)$ passes through the prescribed configurations. This type of higher-order interpolation was proposed in regard to applications in Computational Anatomy, in \cite{Gay-BalmazEtAl2010} and studied there in detail in the finite-dimensional setting.  Alternatively, one may define a variational principle on shape space itself and find an optimal curve that interpolates the given shapes. This was the approach of \cite{TrVi2010}, where interpolation by Riemannian cubics on shape space was proposed, and existence results in the case of landmarks were given. The particular cost functionals that interest us here are, on the group,
\begin{equation}
  S_G[g] = \int_0^1 \left\|\frac{D}{Dt}\dot{g}\right\|_g^2\, dt, \nonumber
\end{equation}
and on the object manifold,
\begin{equation}
  S_Q[q] = \int_0^1 \left\|\frac{D}{Dt}\dot{q}\right\|_q^2\, dt. \nonumber
\end{equation}
Hamilton's principle, $\delta S = 0$, leads to cubics in the respective manifolds $G$ and $Q$. One is therefore naturally led to the question of how Riemannian cubics on the group of transformations are related to those on the object manifold. This is a new question in geometric mechanics and its answer is potentially important in applications of computational anatomy. 

In this paper, we begin the investigation of this question. We first analyze horizontal lifts of cubics on the object manifold to the group of transformations. In the context of symmetric spaces, we completely characterize the class of cubics on the object manifold that can be lifted horizontally to cubics on the group of transformations. For rank-one symmetric spaces this, selects geodesics composed with cubic polynomials in time. We then study non-horizontal curves in $G$. We show that certain types of non-horizontal geodesics project to cubics in $Q$. Finally, we present the theory of second-order Lagrange--Poincar\'e reduction for Riemannian cubics in the group of transformations.  The reduced form of the equations reveals the obstruction for such a cubic to project to a cubic on the object manifold.  
\subsection{Main content of the paper}
The main content of the paper may be summarized as follows:
\begin{description}
\item In Section \ref{Section-Geometric_Setting} we outline the geometric setting for the present investigation of Riemannian cubics for normal metrics and their relation to Riemannian cubics on the Lie group of transformations. In particular, we summarize the definition of higher-order tangent bundles by following \cite{CeMaRa2001}. Then we define normal metrics and recall that the projection $G \rightarrow Q$ which maps an element of the group to the transformed image of a reference shape is a Riemannian submersion.
\item In Section \ref{Section-Covariant_derivatives} we provide the key expressions for covariant derivatives of curves, and vector fields along curves, both in Lie groups and in object manifolds with normal metric. The horizontal generator of a curve in the object manifold is introduced and expressed in terms of the momentum map of the cotangent lifted action.
\item In Section \ref{Section-Cubic_splines_for_normal_metrics} we derive the  equations of Riemannian cubics for normal metrics. For ease of exposition we first consider a more general context and then specialize to the case of Riemannian cubics. Here the horizontal generator plays a crucial role. Invariant metrics on Lie groups are a simple example of normal metrics, as are the metrics on symmetric spaces. These examples are worked out in detail. We also recall from \cite{Gay-BalmazEtAl2010} how Riemannian cubics on Lie groups can be treated equivalently by Euler--Poincar\'e reduction. Our derivation of the Euler-Lagrange equations bypasses any mention of curvature. Therefore, these equations can also be used to \emph{compute} curvatures by means of the general equation for Riemannian cubics derived in \cite{NoHePa1989, CrSL1995}. This is demonstrated by two simple examples.
\item  In Section \ref{Sec-Horizontal_lifts} we study horizontal lifting properties of Riemannian cubics. Our form of the Euler-Lagrange equations is particularly well suited for this task, due to the appearance of the horizontal generator of curves. We characterize the cubics in symmetric spaces that can be lifted horizontally to cubics in the group of isometries. We then proceed to the more general situation of a Riemannian submersion and state necessary and sufficient conditions under which a cubic on the object manifold lifts horizontally to a cubic on the Lie group of transformations.

\item In section \ref{Section-Extended_analysis_Reduction_by_isotropy_subgroup} we extend the previous considerations to include non-horizontal curves on the Lie group. We show that certain non-horizontal geodesics on the group of transformations project to cubics on the object manifold. We then reduce the Riemannian cubic variational problem on the group by the isotropy subgroup of a reference object. To achieve this, we use  higher-order Lagrange--Poincar\'e reduction \cite{CeMaRa2001, Gay-BalmazEtAl2011HOLPHP}. The reduced Lagrangian couples horizontal and vertical parts of the motion, and this explains the absence of a general horizontal lifting property for cubics. Namely, the reduced equations that describe Riemannian cubics on the Lie group contain the equation that characterizes Riemannian cubics on the object manifold, plus extra terms. These extra terms represent the obstruction  for a cubic on the Lie group to project to a cubic on the object manifold. In this sense, these reduced equations fully describe the relation between cubics on the Lie group and cubics on the object manifold. They also lend themselves to further study of the questions investigated in the present paper. 
\end{description} 
\section{Geometric setting}\label{Section-Geometric_Setting}
This section introduces the geometric ingredients used in the paper. In particular, it reviews Hamilton's principle on higher-order tangent bundles. It also introduces Riemannian cubics and some of their generalizations. It then defines normal metrics and explains how the projection $G \rightarrow Q$ mapping an element of the group to the transformed image of a reference shape $a \in Q$ is a Riemannian submersion.
\subsection{$\mathbf{k^{th}}$-order tangent bundles}
The $k^{th}$-order tangent bundle $\tau_Q^{(k)}: T^{(k)}Q \rightarrow Q$ of a smooth manifold $Q$ is defined as a set of equivalence classes of curves, as follows: Two curves $q_1(t)$, $q_2(t)$ are \emph{equivalent}, if and only if their time derivatives at $t = 0$ up to order $k$ coincide in any local chart. That is, $q_1^{(l)}(0) = q_2^{(l)}(0)$, for $l = 0, \ldots , k$. The equivalence class of a given curve $q(t)$ is denoted by $\left[q\right]^{(k)}_{q(0)}$. One then defines $T^{(k)}Q$ as the set of equivalence classes of curves, with projection
\begin{equation}
  \tau_Q^{(k)}: T^{(k)}Q \rightarrow Q\,, \quad \left[q\right]^{(k)}_{q(0)} \mapsto q(0)\,.
\end{equation}
Finally, the inverse image of $q_0 \in Q$ by $\tau^{(k)}_Q$ will be denoted by $T^{(k)}_{q_0}Q$. This is the set of equivalence classes of curves based at $q_0$. Note that $T^{(0)}Q = Q$ and $T^{(1)}Q = TQ$.

Given a curve $q(t)$ one defines the $k^{th}$-order tangent element at time $t$ to be
\begin{equation}
  \left[q\right]^{(k)}_{q(t)}:= \left[h\right]^{(k)}_{h(0)}\,, \quad \mbox{where} \quad h: \tau \mapsto q(t + \tau)\,.
\end{equation}
We will sometimes use the coordinate notation $(q(t), 
\dot{q}(t), \ldots, q^{(k)}(t))$ to denote 
$\left[q\right]^{(k)}_{q(t)}$.
For more information on higher-order tangent bundles see 
\cite{CeMaRa2001}.

A smooth map $f: M \rightarrow N$ induces a map between $k^{th}$-order tangent bundles,
\begin{equation}
T^{(k)}f : T^{(k)}M \rightarrow T^{(k)}N\,, \quad \left[q\right]_{q_0}^{(k)} \mapsto \left[f \circ q\right]_{f(q_0)}^{(k)}\,.
\end{equation}
Therefore, a group action $\Phi: G \times Q \mapsto Q$ on the base manifold lifts to a group action on the $k^{th}$-order tangent bundle,
\begin{equation}\label{Lift_of_group_action}
\Phi^{(k)}: G \times T^{(k)}Q \mapsto T^{(k)}Q \,, \quad \Phi^{(k)}_g: \left[q\right]^{(k)}_{q(0)} \mapsto \left[ \Phi_g \circ q \right]^{(k)}_{q(0)}\,.
\end{equation}
\subsection{$\mathbf{k^{th}}$-order Euler-Lagrange equations}
A $k^{th}$-order Lagrangian is a function $L: T^{(k)}Q \rightarrow \mathbb{R}$. In the higher-order generalization of Hamilton's principle, one seeks a critical point of the functional
\begin{equation}\label{Hamiltons_principle_general}
  \mathcal{J}[q] := \int_{0}^{1} L\left(q(t), \dot{q}(t), \ldots, q^{(k)}(t)\right) \, dt
\end{equation}
with respect to variations of the curve $q(t)$ satisfying fixed end point conditions $q^{(l)}(0) = q_0^{(l)}$ and $q^{(l)}(1) = q_1^{(l)}$ for $l = 0, \ldots, k-1$. A curve $q(t)$ respecting the end point conditions satisfies Hamilton's principle if and only if it satisfies the \emph{$k^{th}$-order Euler-Lagrange equations}
\begin{equation}
[L ]_q^{k}
:=
  \sum_{j=0}^k (-1)^j \frac{d^j}{dt^j} \frac{\partial L}{\partial q^{(j)}} = 0\,.
\end{equation}
We will use from now on the standard $\delta$-notation for variations. Let $\varepsilon \mapsto f_\epsilon$ be a variation of a quantity $f = f_0$. Define
\begin{equation}
  \delta f := \left.\frac{d}{d\varepsilon}\right|_{\varepsilon = 0} f_{\varepsilon}\,.
\end{equation}
Hamilton's principle, for example, then takes the simple form $\delta J = 0$.
\paragraph{Examples: Riemannian cubic polynomials and generalizations.} Riemannian cubics, as introduced in \cite{GaKa1985, NoHePa1989} and \cite{CrSL1995}, generalize cubic polynomials in Euclidean space to Riemannian manifolds. Let $(Q, \gamma)$ be a Riemannian manifold and denote by $\frac{D}{Dt}$ the covariant derivative with respect to the Levi-Civita connection $\nabla$ for the metric $\gamma$. Let $\|\cdot \|$ be the norm induced by $\gamma$. Consider Hamilton's principle \eqref{Hamiltons_principle_general} for $k=2$ with Lagrangian $L: T^{(2)}Q \rightarrow \mathbb{R}$ given by
\begin{equation}\label{Cubics_Lagrangian}
  L\left(q, \dot{q}, \ddot{q}\right) = \frac{1}{2}\left\|\frac{D}{Dt} \dot{q}\right\|_q^2 \,.
\end{equation}
This Lagrangian is indeed well-defined on the second-order tangent bundle $T^{(2)}Q$, since in coordinates
\begin{equation}
  \frac{D}{Dt} \dot{q}^k = \ddot{q}^k + \Gamma^k_{ij}(q)\dot{q}^i \dot{q}^j\,,
\end{equation}
where $\Gamma^{k}_{ij}(q)$ are the Christoffel symbols at the point $q$. Denoting by $R$ the curvature tensor defined by 
$R(X, Y) Z := \nabla_X \nabla_Y Z - \nabla_Y \nabla_X Z - \nabla_{[X, Y]}Z$ for any vector fields $X,Y,Z \in 
\mathfrak{X}(Q)$, the Euler-Lagrange equation is
\begin{equation}\label{ELeqns-T1}
  \frac{D^3}{Dt^3}\dot q(t)+ R \left( \frac{D}{Dt}\dot q(t),\dot q(t)
\right) \dot q(t)= 0.
\end{equation}
A solution of this equation is called a \emph{Riemannian cubic}, or \emph{cubic} for short. These are the curves we shall study in this paper. 

We also mention two generalizations of Riemannian cubics. The first one consists of the class of 
\emph{geometric $k$-splines} \cite{CaSLCr1995} for $k \geq 2$ with Lagrangian $L: T^{(k)}Q \rightarrow \mathbb{R}$,
\begin{equation}
  L\left(q, \dot{q}, \ldots, q^{(k)}\right)  =  \frac{1}{2}\left\|\frac{D^{k-1}}{Dt^{k-1}} \dot{q}\right\|_q^2.
\end{equation}
Note that the case $k=2$ recovers the Riemannian cubics. The Euler-Lagrange equations are  \cite{CaSLCr1995}
\begin{equation}
\frac{D^{2k-1}}{Dt^{2k-1}}\dot q(t) + \sum_{j=2}^k (-1)^j R
\left(\frac{D^{2k-j-1}}{Dt^{2k-j-1}}\dot q(t),
\frac{D^{j-2}}{Dt^{j-2}}\dot q(t)\right)\dot q(t) =0.
\end{equation}
The second generalization comprises the class of \emph{cubics in tension}; see, for example, \cite{HuBl2004}.
\subsection{Normal metrics}\label{Section_Normal_metrics}
\paragraph{Group actions.} Let $G$ be a Lie group with Lie algebra $\mathfrak{g}$, acting from the \textit{left} on a smooth manifold $Q$. We denote the action  by
\begin{equation}
\Phi:G \times Q \rightarrow Q\,, \quad (g, q) \mapsto gq:= \Phi_g(q).
\end{equation}
The infinitesimal generator of the action corresponding to $\xi \in \mathfrak{g}$ is the vector field on $Q$ given by
\begin{equation}
  \xi_Q(q) := \left.\frac{d}{dt}\right|_{t = 0}\operatorname{exp}( t\xi)q.
\end{equation}
 In accordance with \eqref{Lift_of_group_action}, the tangent lift of $\Phi$ is defined as the action of $G$ on $TQ$,
\begin{equation}
G \times TQ \rightarrow TQ, \quad (g, v_q) \mapsto gv_q:= T\Phi_g(v_q),
\end{equation}
with infinitesimal generator $\xi_{TQ}$ corresponding to $\xi \in \mathfrak{g}$. Note that we have the relation
\begin{equation}\label{relation_xiQ_xiTQ}
T \tau _Q ( \xi _{TQ}( v _q ))= \xi _Q (q),
\end{equation} 
where $ \tau _Q :TQ \rightarrow Q$ is the tangent bundle projection. Similarly, one defines the cotangent lifted action as
\begin{equation}
  G \times T^*Q \rightarrow T^*Q, \quad (g, \alpha_q) \mapsto g\alpha_q := (T\Phi_{g^{-1}})^*(\alpha_q).
\end{equation}
The momentum map $\mathbf{J} : T^*Q \rightarrow \mathfrak{g}^*$ associated with the cotangent lift of $\Phi$ is determined by
\begin{equation}\label{Definition_momentum_map}
\left<\mathbf{J} (\alpha_q), \xi\right>_{\mathfrak{g}^* \times \mathfrak{g}}=\left<\alpha_q, \xi_Q(q)\right>_{T^*Q \times TQ},
\end{equation}
for arbitrary $\alpha_q \in T^*Q$ and $\xi \in \mathfrak{g}$.
\paragraph{Normal metrics.} Let $G$ be a Lie group acting transitively from the left on a smooth manifold $Q$. Let $\gamma_G$ be a right-invariant Riemannian metric on $G$. We will now use the action of $G$ on $Q$ in order to induce a metric $\gamma_Q$ on $Q$. To do this, define a pointwise inner product on tangent spaces $T_qQ$ by
  \begin{equation}\label{Induced_metric}
   \gamma_Q(v_q, v_q) := \min_{\left\{\xi \in \mathfrak{g} \,\left|\,\xi_Q( q) = v_q \right.\right\}} \left\{\gamma_G(\xi, \xi)\right\}\,.
 \end{equation}

We refer to \cite{You2010} for a rigorous treatment of the infinite dimensional case of diffeomorphism groups.
 We define the \emph{vertical subspace of $\mathfrak{g}$ at $q$} as
 \begin{equation}\label{Vertical_subspace_Lie_algebra_definition}
   \mathfrak{g}^V_q = \left\{ \xi \in \mathfrak{g} \big| \xi_Q(q) = 0\right\}
 \end{equation}
and the \emph{horizontal subspace} as the orthogonal complement $\mathfrak{g}_q^H = \left(\mathfrak{g}^V_q\right)^{\perp}$. Denote the orthogonal projection onto $\mathfrak{g}^H_q$ by  $\xi \mapsto \operatorname{H}_q\left(\xi\right)$. This projection depends smoothly on $q \in Q$. The vertical projection is similarly written as $\xi \mapsto \operatorname{V}_q\left(\xi\right)$. Let $\nu_1, \ldots \nu_k$ be an orthonormal basis of $\mathfrak{g}_q^V$. For $v_q$ in $T_qQ$ and $\xi$ any generator of $v_q$, i.e. $\xi_Q(q) = v_q$, we can write
\begin{align}
  \gamma_Q(v_q, v_q) &= \min_{\lambda^i \in \mathbb{R}}\left\{\gamma_G\left(\operatorname{H}_q\left(\xi\right) + \lambda^i\nu_i, \operatorname{H}_q\left(\xi\right) + \lambda^j \nu_j\right)\right\} =  \min_{\lambda^i \in \mathbb{R}}\left\{\gamma_G\left(\operatorname{H}_q\left(\xi\right), \operatorname{H}_q\left(\xi\right)\right) + \sum^k_{i=1} (\lambda^i)^2\right\} \nonumber \\
&= \gamma_G\left(\operatorname{H}_q\left(\xi\right), \operatorname{H}_q\left(\xi\right)\right). \label{Induced_metric_expression}
\end{align}
The pair $(Q, \gamma_Q)$ with $\gamma_Q$ defined pointwise by \eqref{Induced_metric} is therefore a Riemannian manifold. The metric $\gamma_Q$ is usually called a \emph{normal metric} or a \emph{projected metric}. It coincides with the normal metric considered in \cite{FGBRa2011}. 
\paragraph{Tangent action of the vertical space.}
 From 
\eqref{relation_xiQ_xiTQ}, it follows that $T \tau _Q( \xi _{TQ}( v_q))=0_q$, for all $ \xi \in \mathfrak{g}  ^V _q $. 
Let $V_{v_q} TQ : = \ker (T_{v_q} \tau_Q)$ be the vertical
space at $v_q$. Then $\xi_{TQ}(v_q) \in V_{v_q}(TQ)$  can be identified with an element of $T_qQ$ in the standard way: 
$\tau: V_{v_q}(TQ)\ni  \xi_{TQ}( v_q) \stackrel{\sim} \mapsto w_q \in T_qQ$, 
where $w_q $ is the unique vector satisfying
$\xi_{TQ}(v_q) = \left.\frac{d}{d\varepsilon}\right|_{\varepsilon = 0} (v_q + \varepsilon w_q)$. Let $\gamma_Q$ be a Riemannian metric on $Q$ and define the \emph{Connector} of $\gamma_Q$ to be the intrinsic map $K: T(TQ) \rightarrow TQ$ given in coordinates as
\begin{equation}
  K_{\operatorname{loc}}(x, w, u, v):=(x, v + \Gamma(x)(w,u)),
\end{equation}
where $\Gamma$ are the Christoffel symbols of the metric. More information on the connector can be found, for example, in \cite[\S13.8]{Michor2008}. Its key property is that for any two vector fields $X, Y \in \mathfrak{X}(Q)$,
\[
  \nabla_YX = K \circ TX \circ Y.
\]
Moreover, its restriction to vertical spaces $V_{v_q} \subset T(TQ)$ corresponds to $\tau$, as $K_{\operatorname{loc}}(x, e, 0, v) = (x, v) = \tau_{\operatorname{loc}}(x, e, 0, v)$. For $\xi \in \mathfrak{g}_q^V$ one therefore obtains
\begin{equation}\label{Konnector_formula_vertical_action}
  \tau\left(\xi_{TQ}(v_q)\right) =  K\left(\xi_{TQ}(v_q)\right) =  K\left(T\xi_Q(q)(v_q)\right) = \nabla_{v_q} \xi_Q.
\end{equation}
\paragraph{Riemannian submersion.} For $(G, \gamma_G)$ and $(Q, \gamma_Q)$ as above, fix $a \in Q$, and consider the principal bundle projection
\begin{equation}
\Pi : G \rightarrow Q\,, \quad g \mapsto ga\,.
\end{equation}
For all $g\in G$, we decompose the tangent space $T_{g}G$ into the vertical space $T^V_{g}G := \ker T_{g}\Pi$ and the horizontal space $T^H_{g}G := \left(T^V_{g}G\right)^\perp$ determined by the Riemannian metric $ \gamma _G $. These spaces are translations of appropriate subspaces of $\mathfrak{g}$,
\begin{align}
  T^V_{g}G = TL_g(\mathfrak{g}^V_a) =TR_g( \mathfrak{g}^V_q), \quad \mbox{and} \quad   T^H_{g}G = TR_g (\mathfrak{g}^H_q) \,,\label{Relation_horizontal_spaces}
\end{align}
where $q=\Pi(g)$.
The second equality in \eqref{Relation_horizontal_spaces}  is due to the right-invariance of $\gamma_G$. This justifies the terminology vertical and horizontal subspaces for the isotropy subalgebra $ \mathfrak{g}  _q ^V$ at $q$ and its orthogonal complement $ \mathfrak{g}  ^H _q $. We also record that
\begin{equation}\label{Commutativity}
T_g \Pi(\xi_G(g)) = \xi_Q (\Pi(g))\,, \quad \mbox{for all } g \in G, \xi \in \mathfrak{g}\,.
\end{equation}
It is well known that $\Pi:G \rightarrow Q$ is a Riemannian submersion, i.e., $\Pi$ is a surjective submersion and for any $v_g, w_g \in T_g^HG$, we have
\begin{equation}
  \gamma_G(v_g, w_g) = \gamma_Q\left( T_g \Pi(v_g), T_g \Pi(w_g)\right) \,.
\end{equation}

This property will be useful when we compute covariant derivatives for normal metrics in the next section.
\section{Covariant derivatives}\label{Section-Covariant_derivatives}
The main goal of this section is to obtain expressions for the covariant derivative of a curve $q(t) \in Q$, where $Q$ is equipped with a normal metric. The strategy is the following. First we compute covariant derivatives of curves in Lie groups with right-invariant metrics. Then we exploit the fact that the projection mapping $\Pi$ introduced in Section \ref{Section_Normal_metrics} is a Riemannian submersion.
\subsection{Preliminary remarks}
Let $(Q, \gamma_Q)$ be a Riemannian manifold, and let $q(t)$ be a curve in $Q$. On an open set $U \subset Q$ introduce a coordinate map $\phi: U \ni q\mapsto x \in  \mathbb{R}^n$ and let $V(t) = v^i(t) \left.\frac{\partial}{\partial x^i}\right|_{q(t)} \in T_{q(t)}Q$ be a vector field along $q(t)$. The covariant derivative, with respect to the Levi-Civita connection, of $V$ along $q(t)$ is
\begin{equation}\label{Covariant_derivative_vf_along_curve}
  \frac{D}{Dt}V = (\dot{v}^k + \Gamma^k_{ij}\dot{x}^i v^j) \left.\frac{\partial}{\partial x^k}\right|_q,
\end{equation}
where the $\Gamma^k_{ij}$ are the Christoffel symbols of $\gamma_Q$. The geodesic equation $\frac{D}{Dt}\dot{q} = 0$ becomes, in coordinates,
\begin{equation}\label{Geodesic_equation_coordinates}
   \ddot{x}^k + \Gamma^k_{ij}\dot{x}^i \dot{x}^j = 0.
\end{equation}
Let $g(t) \in G$ be a curve in a Lie group $(G, \gamma_G)$ with right-invariant Riemannian metric $\gamma_G$. The symmetry reduced form of the geodesic equation is the Euler--Poincar\'{e} equation \cite{MaRa1994}
\begin{equation}\label{Euler_Poincare_equation}
  \dot{\xi} + \operatorname{ad}^\dagger_\xi \xi = 0, \quad \mbox{with} \quad \dot{g} = \xi_G(g),
\end{equation}
 where $\operatorname{ad}^\dagger$ is the metric adjoint, i.e.,
 it has the expression 
 \begin{equation}\label{ad-dagger-def}
 \operatorname{ad}^\dagger_{\nu}{\kappa} := 
 (\operatorname{ad}^*_\nu (\kappa^\flat))^\sharp
 \end{equation}
 for any $\nu, \kappa\in \mathfrak{g}$.

We also recall a formula for the covariant derivative of horizontal vector fields for Riemannian submersions, which we will use subsequently. Let $\Pi: (\tilde{Q}, \gamma_{\tilde{Q}}) \rightarrow (Q, \gamma_Q)$ be a Riemannian submersion and denote the covariant derivatives with respect to the Levi-Civita connections on $\tilde{Q}$ and $Q$ by $\tilde{\nabla}$ and 
$\nabla$, respectively. Let $\tilde{X}, \tilde{Y} \in 
\mathfrak{X}(\tilde{Q})$ be the horizontal lifts of 
$X, Y \in \mathfrak{X}(Q)$, respectively.  Then (see e.g., 
\cite{Lee1997}),
 \begin{equation}\label{Covariant_derivative_on_RS}
   \tilde{\nabla}_{\tilde{X}} \tilde{Y} = \widetilde{\nabla_{X}Y} + \frac{1}{2} [\tilde{X}, \tilde{Y}]^V,
 \end{equation}
where the superscript $V$ denotes the vertical part. The \emph{horizontal lifting property} of geodesics follows. 
Namely, if $\tilde{q}(t) \in \tilde{Q}$ is the horizontal lift of a geodesic $q(t) \in Q$, that is, $\nabla_{\dot{q}} \dot{q} = 0$, then $\tilde{q}(t)$ is a geodesic, since $ \tilde{\nabla}_{\dot{\tilde{q}}}\dot{\tilde{q}} = \widetilde{\nabla_{\dot{q}}\dot{q}} = 0$. Note that applying $T\Pi$ to both sides of \eqref{Covariant_derivative_on_RS} gives
\begin{equation}\label{Projected_covariant_derivative_on_RS}
  T\Pi \left(\tilde{\nabla}_{\tilde{X}} \tilde{Y}\right) = \nabla_XY.
\end{equation}
\subsection{Covariant derivatives for normal metrics} 
The following proposition is a compilation of well-known expressions that will be used extensively in the rest of the paper. The proofs of \eqref{Covariant_derivative_group_prop} -- \eqref{Covariant_derivative_covf_prop} can be found, for example, in \cite{KrMi1997}. We note that the expression \eqref{Momentum_map_horizontal_generator_prop} below for the horizontal generator was also used in \cite{TrVi2010} and \cite{FGBRa2011}.
\begin{proposition}\label{Covariant_derivative_main_proposition}
  Let $(G, \gamma_G)$ be a Lie group with right-invariant metric, acting transitively from the left on a manifold $(Q, \gamma_Q)$ with normal metric $\gamma_Q$.
  \begin{enumerate}[{\bf (i)}]
  \item Let $g(t)$ be a curve in $G$, and define $\xi(t) \in \mathfrak{g}$ by $\dot{g} = \xi_G(g)$. Then,
    \begin{equation}\label{Covariant_derivative_group_prop}
      \frac{D}{Dt} \dot{g} = \left( \dot{\xi} + \operatorname{ad}^\dagger_\xi \xi\right) _G(g).
    \end{equation}
  \item More generally, let $V(t) \in T_{g(t)}G$ be a vector field along a curve $g(t) \in G$. Define curves $\xi(t), \, \nu(t) \in \mathfrak{g}$ by $\dot{g} = \xi_G(g)$ and $V = \nu_G(g)$, respectively. Then,
    \begin{equation}\label{Covariant_derivative_vf_prop}
      \frac{D}{Dt} V =\Big( \dot{\nu} + \frac{1}{2} \operatorname{ad}^\dagger_\xi \nu + \frac{1}{2} \operatorname{ad}^\dagger_\nu \xi - \frac{1}{2} [\xi, \nu]\Big) _G(g).
    \end{equation}
Furthermore, let $M(t) \in T^*_{g(t)}G$ be a covector field along $g(t)$ and define $\mu(t) \in \mathfrak{g}^*$ by $\mu = (TR_g)^* M$. Then,
\begin{equation}\label{Covariant_derivative_covf_prop}
  \frac{D}{Dt}M = (TR_{g^{-1}})^* \Big(\dot{\mu} - \frac{1}{2} (\operatorname{ad}_\xi(\mu^\sharp))^\flat + \frac{1}{2} \operatorname{ad}^*_{\mu^{\sharp}}\xi^\flat + \frac{1}{2} \operatorname{ad}^*_\xi \mu\Big).
\end{equation}
\item Let $q(t)$ be a curve in $Q$ and let $\xi(t) \in \mathfrak{g}$ be a curve satisfying $\dot{q} = \xi_Q(q)$. Then
\begin{equation}\label{Covariant_derivative_induced_prop}
\frac{D}{Dt} \dot{q} = \left( \dot{\xi} + \operatorname{ad}^\dagger_{\operatorname{H}_q\left(\xi\right)} \operatorname{H}_q\left(\xi\right)\right) _Q(q) + \nabla_{\dot{q}}\big(\operatorname{V}_q\left(\xi\right)\big)_Q,
\end{equation}
In particular, if $\xi(t) \in \mathfrak{g}^H_{q(t) }$ is the unique horizontal generator of $q(t)$, then
\begin{equation}\label{Covariant_derivative_induced_horizontal_prop}
\frac{D}{Dt} \dot{q} = \left( \dot{\xi} + \operatorname{ad}^\dagger_\xi \xi\right) _Q(q).
\end{equation}
\item Let $q(t)$ be a curve in $Q$. The unique horizontal generator of $q(t)$ is given by the Lie algebra element $\bar{\mathbf{J}}(\dot{q})$ defined by
\begin{equation}\label{Momentum_map_horizontal_generator_prop}
\bar{\mathbf{J}}(\dot{q}) := \left( \mathbf{J} (\dot{q}^\flat)\right) ^\sharp\in \mathfrak{g}  ^H _q ,
\end{equation}
where $\mathbf{J} $ is the cotangent lift momentum map defined in Section \ref{Section_Normal_metrics}. In particular,
\begin{equation}\label{Covariant_derivative_momentum_map_prop}
\frac{D}{Dt} \dot{q} =  \left(\partial _t \bar{\mathbf{J}}( \dot q) + \operatorname{ad}^\dagger_{\bar{\mathbf{J}}( \dot q)} \bar{\mathbf{J}}( \dot q) \right)_Q(q).
\end{equation}
\end{enumerate}
\end{proposition}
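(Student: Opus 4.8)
The plan is to establish the four items in order, obtaining \eqref{Covariant_derivative_group_prop} and \eqref{Covariant_derivative_vf_prop} from the Koszul formula for the right-invariant metric $\gamma_G$, transporting these to $Q$ through the Riemannian submersion $\Pi$ to get \eqref{Covariant_derivative_induced_horizontal_prop} and \eqref{Covariant_derivative_induced_prop}, and finally identifying the horizontal generator with $\bar{\mathbf J}(\dot q)$ using the defining property \eqref{Definition_momentum_map} of the momentum map together with the submersion property.

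For \eqref{Covariant_derivative_vf_prop} I would fix $\xi,\nu,\eta\in\mathfrak g$ and evaluate the Koszul formula on the right-invariant generators $\xi_G,\nu_G,\eta_G$. Since $\gamma_G$ is right-invariant, $\gamma_G(\nu_G,\eta_G)$ is constant on $G$, so the three derivative-of-the-metric terms drop out and only the bracket terms survive. Using the anti-homomorphism property $[\xi_G,\nu_G]=-[\xi,\nu]_G$ of right-invariant generators, evaluating at the identity, and rewriting the two mixed bracket terms by means of the metric adjoint $\operatorname{ad}^\dagger$ from \eqref{ad-dagger-def}, I would obtain $\nabla_{\xi_G}\nu_G=\tfrac12(\operatorname{ad}^\dagger_\xi\nu+\operatorname{ad}^\dagger_\nu\xi-[\xi,\nu])_G$. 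For a field $V=\nu(t)_G(g(t))$ along $g(t)$ one then has $\frac{D}{Dt}V=\dot\nu_G(g)+\nabla_{\dot g}(\nu_G)$, with the second term given by the preceding identity at $\nu$ frozen; this is \eqref{Covariant_derivative_vf_prop}, and $\nu=\xi$ gives \eqref{Covariant_derivative_group_prop}. The covector identity \eqref{Covariant_derivative_covf_prop} follows by dualizing: impose metric compatibility $\frac{d}{dt}\langle M,V\rangle=\langle\frac{D}{Dt}M,V\rangle+\langle M,\frac{D}{Dt}V\rangle$ for arbitrary $V$ and solve for $\frac{D}{Dt}M$ using \eqref{Covariant_derivative_vf_prop}. (These three formulas are also available in \cite{KrMi1997}.)

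The core of the proposition is the passage to $Q$. Let $\eta(t):=\operatorname H_{q(t)}(\xi(t))$ be the horizontal generator of $\dot q$, and let $\tilde q(t)$ be a horizontal lift of $q(t)$ to $G$. By \eqref{Relation_horizontal_spaces} and \eqref{Commutativity} the vector $\eta_G(\tilde q)$ is horizontal and projects to $\eta_Q(q)=\dot q$, so $\dot{\tilde q}=\eta_G(\tilde q)$. Applying \eqref{Covariant_derivative_group_prop} in $G$ gives $\frac{D}{Dt}\dot{\tilde q}=(\dot\eta+\operatorname{ad}^\dagger_\eta\eta)_G(\tilde q)$. I would then project with $T\Pi$: extending $\dot q$ to a vector field and taking its horizontal lift shows, via \eqref{Projected_covariant_derivative_on_RS}, that $T\Pi\big(\frac{D}{Dt}\dot{\tilde q}\big)=\frac{D}{Dt}\dot q$, while \eqref{Commutativity} turns the right-hand side into $(\dot\eta+\operatorname{ad}^\dagger_\eta\eta)_Q(q)$. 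When $\xi$ is itself horizontal this is exactly \eqref{Covariant_derivative_induced_horizontal_prop}. For the general formula \eqref{Covariant_derivative_induced_prop} I would split $\xi=\operatorname H_q(\xi)+\operatorname V_q(\xi)=\eta+\zeta$, so that $\operatorname{ad}^\dagger_\eta\eta=\operatorname{ad}^\dagger_{\operatorname H_q(\xi)}\operatorname H_q(\xi)$, and treat $\dot\eta_Q(q)=\dot\xi_Q(q)-\dot\zeta_Q(q)$. The key observation is that $\zeta(t)_Q(q(t))\equiv 0$ because $\zeta(t)\in\mathfrak g^V_{q(t)}$; differentiating this identity covariantly along the curve, with $\zeta(t)_Q$ viewed as a time-dependent vector field, gives $0=\dot\zeta_Q(q)+\nabla_{\dot q}(\operatorname V_q(\xi))_Q$, so $-\dot\zeta_Q(q)=\nabla_{\dot q}(\operatorname V_q(\xi))_Q$. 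Substituting produces \eqref{Covariant_derivative_induced_prop}.

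Finally, for \eqref{Momentum_map_horizontal_generator_prop} I would verify separately that $\bar{\mathbf J}(\dot q)$ is horizontal and that it generates $\dot q$. Pairing with arbitrary $\xi\in\mathfrak g$ and using \eqref{Definition_momentum_map} gives $\gamma_G(\bar{\mathbf J}(\dot q),\xi)=\langle\mathbf J(\dot q^\flat),\xi\rangle=\gamma_Q(\dot q,\xi_Q(q))$; taking $\xi\in\mathfrak g^V_q$ makes the right-hand side vanish, so $\bar{\mathbf J}(\dot q)\perp\mathfrak g^V_q$, i.e.\ $\bar{\mathbf J}(\dot q)\in\mathfrak g^H_q$. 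Let $\xi_0\in\mathfrak g^H_q$ be the unique horizontal generator of $\dot q$. For any $\beta\in\mathfrak g^H_q$ the submersion property of $\Pi$ together with right-invariance of $\gamma_G$ gives $\gamma_G(\xi_0,\beta)=\gamma_Q((\xi_0)_Q(q),\beta_Q(q))=\gamma_Q(\dot q,\beta_Q(q))=\gamma_G(\bar{\mathbf J}(\dot q),\beta)$; since $\bar{\mathbf J}(\dot q)-\xi_0\in\mathfrak g^H_q$ is then orthogonal to all of $\mathfrak g^H_q$, it vanishes, whence $\bar{\mathbf J}(\dot q)=\xi_0$. Equation \eqref{Covariant_derivative_momentum_map_prop} then follows by inserting $\xi=\bar{\mathbf J}(\dot q)$ into \eqref{Covariant_derivative_induced_horizontal_prop}. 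I expect the main obstacle to be the bookkeeping in \eqref{Covariant_derivative_induced_prop}: one must correctly account for the time-dependence of the horizontal/vertical splitting and recognize that, although $(\operatorname V_q(\xi))_Q$ vanishes at $q$, its covariant derivative $\nabla_{\dot q}(\operatorname V_q(\xi))_Q$ does not, and it is precisely this term that distinguishes the general formula from the horizontal case.
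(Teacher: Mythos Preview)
Your proposal is correct and, for parts {\bf(i)}, {\bf(ii)}, {\bf(iv)}, essentially coincides with the paper's argument (the paper simply cites \cite{KrMi1997} for {\bf(i)}--{\bf(ii)}, and your Koszul computation is the standard proof behind that citation; your proof of {\bf(iv)} is the same non-degeneracy argument, only carried out in $\mathfrak g$ rather than in $T_qQ$).

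The one genuine difference is in the key step of {\bf(iii)}, namely the identity $(\partial_t\operatorname V_q(\xi))_Q(q)=-\nabla_{\dot q}(\operatorname V_q(\xi))_Q$. You obtain this in one line by applying the product rule $\frac{D}{Dt}[X_t(q(t))]=(\partial_tX_t)(q)+\nabla_{\dot q}X_t$ to the time-dependent vector field $X_t=\zeta(t)_Q$ with $\zeta(t)=\operatorname V_{q(t)}(\xi(t))$, using that $X_t(q(t))\equiv 0$. The paper instead proves this identity via the connector map $K$ and the canonical involution $\kappa_{TTQ}$ on $TTQ$, establishing two intermediate formulas (one relating $(\dot\eta)_Q(q)$ to a covariant $t$-derivative at fixed base point, the other relating that to $-\nabla_{\dot q}\eta_Q$ through $\eta_{TQ}(\dot q)$). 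Your route is shorter and avoids the second tangent bundle machinery; the paper's route has the advantage of making explicit the formula \eqref{Konnector_formula_vertical_action}, $\tau(\xi_{TQ}(v_q))=\nabla_{v_q}\xi_Q$ for $\xi\in\mathfrak g^V_q$, which is of independent use elsewhere. Both arguments hinge on exactly the point you flag at the end: the time-dependence of the splitting $\mathfrak g=\mathfrak g^H_{q(t)}\oplus\mathfrak g^V_{q(t)}$ is what produces the extra $\nabla_{\dot q}(\operatorname V_q(\xi))_Q$ term.
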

\begin{proof}
We refer to \cite{KrMi1997} for the proof of {\bf{(i)}} and {\bf{(ii)}}. In order to show {\bf{(iii)}} recall the projection mapping $\Pi: G \mapsto Q$, $g \mapsto ga$, for a fixed $a \in Q$. Let $q(t)$ be a curve in $Q$ and define the curve $\xi(t) \in \mathfrak{g}$ to be its horizontal generator, that is, $\dot{q} = \xi_Q(q)$ and $\xi \in \mathfrak{g}^H_q$. Choose $g_0 \in \Pi^{-1}(q(0))$ and define $g(t) \in G$ by $g(0) = g_0$ and $\dot{g} = \xi_G(g)$. Then $g(t)$ is the horizontal lift of $q(t)$ through $g _0 $. We apply $T\Pi$ to \eqref{Covariant_derivative_on_RS} and use \eqref{Covariant_derivative_group_prop} and \eqref{Commutativity} to find
\begin{equation}
\frac{D}{Dt} \dot{q} = \nabla_{\dot{q}}\dot{q} = T_g \Pi\left( \tilde{\nabla}_{\dot{g}}\dot{g}\right)  = T_g \Pi\left( (\dot{\xi} + \operatorname{ad}^\dagger_\xi \xi)_G(g)\right)  = (\dot{\xi} + \operatorname{ad}^\dagger_\xi \xi)_Q (q).
\end{equation}
This shows \eqref{Covariant_derivative_induced_horizontal_prop}.

Consequently, we have
\[
\frac{D}{Dt} \dot{q} = \left(\partial _t H_q (\xi) + \operatorname{ad}^\dagger_{\operatorname{H}_q\left(\xi\right)} \operatorname{H}_q\left(\xi\right)\right) _Q(q)= \left(\partial _t \xi + \operatorname{ad}^\dagger_{\operatorname{H}_q\left(\xi\right)} \operatorname{H}_q\left(\xi\right)\right) _Q(q) - \left( \partial _t \operatorname{V}_q(\xi) \right) _Q (q),
\]
so it remains to show that $\left( \partial _t V_q(\xi) \right) _Q (q)=-\nabla_{\dot{q}}\big(\operatorname{V}_q\left(\xi\right)\big)_Q$.

We first show that for $\eta (t) \in \mathfrak{g}  ^H _{q(t)}$ and with $q=q(0)$, we have
\begin{equation} 
\left( \left.\frac{d}{dt}\right|_{t=0} \eta (t) \right) _Q (q)= \left.\frac{D}{Dt}\right|_{t=0} \left( \eta (t) \right) _Q (q) .
\label{desiredformula1}
\end{equation} 
Writing $ \eta (t) = \eta (0)+ \nu (t)$, we have $ \eta (0)_Q(q)=0$, $ \nu (0) =0$ and we compute
\begin{align*} 
\left.\frac{D}{Dt}\right|_{t=0} \left( \eta (t) \right) _Q(q)&= \left.\frac{D}{Dt}\right|_{t=0} \left( \eta (0)+ \nu (t)  \right) _Q(q)= \left.\frac{D}{Dt}\right|_{t=0} \left( \eta (0) \right) _Q(q) + \left( \nu (t)  \right) _Q(q)\\
&= \left.\frac{D}{Dt}\right|_{t=0} \left(\nu  (t) \right) _Q(q)= K \left( \left.\frac{d}{dt}\right|_{t=0}  \left(\nu  (t) \right) _Q(q)\right)\\
& = K \left(\left.\frac{d}{dt}\right|_{t=0} \left.\frac{d}{d\varepsilon}\right|_{\varepsilon=0} \Phi _{ \operatorname{exp}( \varepsilon \nu (t) )}(q)  \right)=K \left( \left.\frac{d}{d\varepsilon}\right|_{\varepsilon=0}\left.\frac{d}{dt}\right|_{t=0} \Phi _{ \operatorname{exp}( \varepsilon \nu (t) )}(q)  \right) \\
&= K \left( \left.\frac{d}{d\varepsilon}\right|_{\varepsilon=0}\left(  \varepsilon \dot{\nu} (0) \right) _Q (q)  \right)= \left( \dot{\nu} (0) \right) _Q (q)= \left( \left.\frac{d}{dt}\right|_{t=0} \eta  (t) \right) _Q (q) ,
\end{align*} 
where in the seventh equality we used that $ \nu (0) =0$. This demonstrates equation (\ref{desiredformula1}).

We now show that 
\begin{equation} 
\left.\frac{D}{Dt}\right|_{t=0} \left( \eta (t) \right) _Q(q)= - \left.\frac{D}{D\varepsilon}\right|_{\varepsilon=0} \left( \eta (t) \right) _Q (q(t+ \varepsilon ))= -\nabla _{\dot q} \eta _Q
\,,\quad\hbox{for}\quad
\eta (t) \in \mathfrak{g}  ^H _{q(t)}
\label{desiredformula2}
\end{equation} 
We have
\begin{align*} 
\eta (t)_{TQ}(\dot q(t))&= \left.\frac{d}{d\varepsilon}\right|_{\varepsilon=0} T \Phi _{\operatorname{exp}( \varepsilon \eta (t))} \left( \frac{d}{dt} q(t) \right)  \\
&= \left.\frac{d}{d\varepsilon}\right|_{\varepsilon=0} \left[ \frac{d}{dt}\Phi _{\operatorname{exp}( \varepsilon \eta  (t))}(q(t))- \left.\frac{d}{ds}\right|_{s=0}  \Phi _{\operatorname{exp}( \varepsilon \eta  (t+s))}(q(t))\right] \\
&= \kappa _{TTQ} \left( \frac{d}{dt}\left.\frac{d}{d\varepsilon}\right|_{\varepsilon=0} \Phi _{\operatorname{exp}( \varepsilon \eta  (t))}(q(t))-   \left.\frac{d}{ds}\right|_{s=0} \left.\frac{d}{d\varepsilon}\right|_{\varepsilon=0}\Phi _{\operatorname{exp}( \varepsilon \eta  (t+s))}(q(t))\right)\\
&=\kappa _{TTQ} \left( \frac{d}{dt}(\eta  (t))_Q(q(t))-\left.\frac{d}{ds}\right|_{s=0} \eta  (s+t)_Q(q(t)) \right)\\
& =-\kappa _{TTQ} \left( \operatorname{Ver}_{0_{q(t)}} \left.\frac{D}{Ds}\right|_{s=0}  \eta  (s+t)_Q(q(t)) \right) =-\operatorname{Ver}_{\dot q(t)} \left.\frac{D}{Ds}\right|_{s=0}  \eta  (s+t)_Q(q(t)),
\end{align*} 
where $ \kappa _{TTQ}: TTQ \rightarrow TTQ $ denotes the canonical involution, which in local coordinates is expressed as
\[
\kappa _{TTQ}(q,\dot{q},\delta{q},\delta\dot{q}) = (q,\delta{q},\dot{q},\delta\dot{q})
\,.\]
Since $K \left( \eta (t)_{TQ}(\dot q(t)) \right) = K \left( T\eta (t)_{Q}(\dot q(t)) \right) =\left.\frac{D}{D\varepsilon}\right|_{\varepsilon=0} \left( \eta (t) \right) _Q (q(t+ \varepsilon ))$ and $K \circ \operatorname{Ver}_{v _q }= id_{TQ}$, we obtain the desired formula in (\ref{desiredformula2}) above.

Combining the results in (\ref{desiredformula1}) and (\ref{desiredformula2}), we get $\left( \partial _t V_q(\xi) \right) _Q (q)=-\nabla_{\dot{q}}\big(\operatorname{V}_q\left(\xi\right)\big)_Q$ which proves claim {\bf {(iii)}} of the proposition.

The final step is to prove {\bf{(iv)}}. For a fixed $q \in Q$, arbitrary $w_q, \, v_q \in T_qQ$ and $\xi \in \mathfrak{g}$ with $\xi_Q (q) = v_q$ we obtain the following chain of equalities; note that $(\mathbf{J} (w_q^\flat))^\sharp \in \mathfrak{g}_q^H$ is horizontal because of \eqref{Definition_momentum_map}.
\begin{align}
  \gamma_Q\big(((\mathbf{J} (w_q^\flat))^\sharp)_Q(q), v_q\big) &= \gamma_G((\mathbf{J} (w_q^\flat))^\sharp, \xi) = \left<\mathbf{J} (w_q^\flat), \xi\right>_{\mathfrak{g}^*\times \mathfrak{g}} \\ &= \left<w_q^\flat, \xi_Q(q)\right>_{T^*Q \times TQ} = \gamma_Q(w_q, v_q).
\end{align}
Since $v_q$ was arbitrary we conclude $w_q = ((\mathbf{J} (w_q^\flat))^\sharp)_Q(q)$. This, together with \eqref{Covariant_derivative_induced_horizontal_prop}, shows {\bf{(iv)}}.
\end{proof}
\section{Cubics for normal metrics }\label{Section-Cubic_splines_for_normal_metrics}
 In this section we derive the  equations of Riemannian cubics for normal metrics. For ease of exposition we first consider a more general context and then particularize to the case of Riemannian cubics. The examples of Lie groups and of symmetric spaces are worked out in detail. 
\subsection{Preparations}
Consider a manifold $Q$ with a linear connection on its tangent bundle $TQ$. Denote the covariant derivative with respect to this linear connection by $\frac{D}{Dt}$. For $v_q, \, w_q \in T_qQ$ write $(v_q)^H_{w_q} \in T_{w_q}TQ$ for the horizontal lift of $v_q$ to $w_q$, i.e., in local coordinates,
\[
(q,v)^H_{(q,w)} = (q,w,v, -\Gamma(q)(w,v)),
\]
where $\Gamma$
is the Christoffel map of the linear connection. The vertical lift of $v_q$ to $w_q$ is written $(v_q)^V_{w_q} := \left.\frac{d}{d\varepsilon}\right|_{\varepsilon = 0} (w_q + \varepsilon v_q)$. For a variation $(t, s) \mapsto q(t, s)$ of a curve $q(t) = q(t, 0)$ the curve $\delta \dot{q}(t) := \left.\frac{d}{ds}\right|_{s = 0} \dot{q}(t, s) \in T_{\dot q(t) }(TQ)$ splits into horizontal and vertical parts
\begin{equation}\label{Splitting_delta_q_dot}
  \delta \dot{q} = (\delta q)^H_{(q, \dot{q})} + \Big(\left.\frac{D}{Ds}\right|_{s=0} \dot{q}\Big)^V_{(q, \dot{q})}.
\end{equation}
For a function $\xi: TQ \rightarrow \mathfrak{g}$ and an arbitrary $v_q \in TQ$ define the $\mathfrak{g}$-valued linear form $\frac{\delta \xi}{\delta q}\big|_{v_q}$ by
\begin{equation}
  \Big<\left.\frac{\delta \xi}{\delta q}\right|_{v_q}, w_q\Big> := \left.\frac{d}{d\varepsilon}\right|_{\varepsilon = 0} \xi(v(\varepsilon)), \quad \mbox{for any} \quad w_q \in T_qQ,
\end{equation}
where $v(\varepsilon)$ is any curve in $TQ$ with $\left.\frac{d}{d\varepsilon}\right|_{\varepsilon = 0} v(\varepsilon) = (w_q)^H_{v_q}$. On the other hand we write $\frac{\delta \xi}{\delta \dot{q}}\big|_{v_q}$ for the fiber derivative of $\xi$ at $v_q$. Note that if $q(t, s)$ is a variation of a curve $q(t) = q(t, 0)$, then using the splitting \eqref{Splitting_delta_q_dot} we get
\begin{equation}\label{delta_xi_geometric}
  \delta \xi = \Big<\frac{\delta \xi}{\delta q}, \delta q\Big> + \Big<\frac{\delta \xi}{\delta \dot{q}}, \left.\frac{D}{Ds}\right|_{s=0} \dot{q}\Big>.
\end{equation}
We define the operator $\big(\frac{\delta \xi}{\delta q}\big|_{v_q}\big)^*: \mathfrak{g}^* \rightarrow T_q^*Q$ by
\begin{equation}
  \left<\left(\left.\frac{\delta \xi}{\delta q}\right|_{v_q}\right)^* \mu, w_q\right> =  \left<\mu, \Big<\left.\frac{\delta \xi}{\delta q}\right|_{v_q}, w_q\Big>\right>\,, \quad \mbox{for any} \quad \mu \in \mathfrak{g}^*, \, w_q \in T_qQ,
\end{equation}
and similarly the operator $\big(\frac{\delta \xi}{\delta \dot{q}}\big|_{v_q}\big)^*: \mathfrak{g}^* \rightarrow T_q^*Q$ by
(keeping track of $q$ and $\dot{q}$)
\begin{equation}
  \left<\left(\left.\frac{\delta \xi}{\delta \dot{q}}\right|_{v_q}\right)^* \mu, w_q\right> = \left<\mu, \Big<\left.\frac{\delta \xi}{\delta \dot{q}}\right|_{v_q}, w_q\Big>\right> , \quad \mbox{for any} \quad \mu \in \mathfrak{g}^*, \, w_q \in T_qQ.
\end{equation}
\subsection{A generalized variational problem}
Given a Lagrangian $\ell: 2\mathfrak{g} \rightarrow \mathbb{R}$ and a smooth map $\xi : TQ \rightarrow \mathfrak{g}  $, consider the action functional on the space of curves $q(t): [0, 1] \rightarrow Q$ given by
\begin{equation}\label{Action_functional}
  \mathcal{J}[q] = \int_0^1 \ell\left( \xi(q, \dot{q}), \partial_t \xi(q, \dot{q})\right) \, dt,
\end{equation}
and Hamilton's principle $\delta \mathcal{J} = 0$ with respect to variations satisfying $\delta q(0) = \delta q(1) =0$ and $\delta \dot{q}(0) = \delta \dot{q}(1) = 0$. As we shall see below, the cubic spline Lagrangian for normal metrics fits in this framework.

Taking variations of $\mathcal{J}$ we obtain, using \eqref{delta_xi_geometric},
\begin{align*}
  \delta \mathcal{J} &= \int_0^1 \left<\frac{\delta \ell}{\delta \xi}, \delta \xi\right> + \left<\frac{\delta \ell}{\delta \dot{\xi}}, \delta \dot{\xi}\right>\, dt = \int_0^1 \left<\frac{\delta \ell}{\delta \xi} - \frac{d}{dt} \frac{\delta \ell}{\delta \dot{\xi}}, \delta \xi\right>\, dt \\
&= \int_0^1  \left<\frac{\delta \ell}{\delta \xi} - \frac{d}{dt} \frac{\delta \ell}{\delta \dot{\xi}}, \Big<\frac{\delta \xi}{\delta q}, \delta q\Big> + \Big<\frac{\delta \xi}{\delta \dot{q}}, \left.\frac{D}{Ds}\right|_{s=0} \dot{q}\Big> \right>\, dt\\
&= \int_0^1  \left<\left(\frac{\delta \xi}{\delta q}\right)^*\left(\frac{\delta \ell}{\delta \xi} - \frac{d}{dt} \frac{\delta \ell}{\delta \dot{\xi}}\right), \delta q\right> +  \left<\left(\frac{\delta \xi}{\delta \dot{q}}\right)^*\left(\frac{\delta \ell}{\delta \xi} - \frac{d}{dt} \frac{\delta \ell}{\delta \dot{\xi}}\right), \frac{D}{Dt}  \delta q\right> \, dt\\
&= \int_0^1 \left<\left[\left(\frac{\delta \xi}{\delta q}\right)^* - \frac{D}{Dt} \circ \left(\frac{\delta \xi}{\delta \dot{q}}\right)^*\right] \left(\frac{\delta \ell}{\delta \xi} - \frac{d}{dt} \frac{\delta \ell}{\delta \dot{\xi}}\right), \delta q\right>\, dt.
\end{align*}
The Euler-Lagrange equation for Hamilton's principle $\delta \mathcal{J} = 0$ is therefore 
\begin{equation}\label{Euler_Lagrange_general}
  \left[\left(\frac{\delta \xi}{\delta q}\right)^* - \frac{D}{Dt} \circ \left(\frac{\delta \xi}{\delta \dot{q}}\right)^*\right] \left(\frac{\delta \ell}{\delta \xi} - \frac{d}{dt} \frac{\delta \ell}{\delta \dot{\xi}}\right) = 0.
\end{equation}
\subsection{Cubics for normal metrics: Euler-Lagrange equations}
Let $(G, \gamma_G)$ be a Lie group with right-invariant metric $\gamma_G$, acting transitively from the left on a manifold $(Q, \gamma_Q)$ with normal metric $\gamma_Q$. Let $q(t)$ be a curve in $Q$, originating at $q_0 = q(0)$. Recall from \eqref{Momentum_map_horizontal_generator_prop} that its horizontal generator curve is given by $\bar{\mathbf{J}}(\dot{q})= (\mathbf{J}(\dot{q}^\flat))^\sharp \in \mathfrak{g}_q^H$.
\begin{lemma} For any curve $q(t)\in Q$, the curve $\partial _t (\bar{\mathbf{J}} (\dot q)) + \operatorname{ad}^\dagger_{\bar{\mathbf{J}}(\dot q)}\bar{\mathbf{J}}(\dot q)$ is horizontal, that is,  in $\mathfrak{g}^H_{q(t) }$.
\end{lemma}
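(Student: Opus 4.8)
The plan is to pass to the group $G$ and exploit the fact that the acceleration of a horizontal lift is again horizontal. Write $\xi(t) := \bar{\mathbf{J}}(\dot q) \in \mathfrak{g}^H_{q(t)}$ for the horizontal generator and set $\zeta := \partial_t \xi + \operatorname{ad}^\dagger_\xi \xi$, the curve in $\mathfrak{g}$ whose horizontality we must establish. By \eqref{Covariant_derivative_induced_horizontal_prop} (equivalently \eqref{Covariant_derivative_momentum_map_prop}) we already know $\zeta_Q(q) = \frac{D}{Dt}\dot q$, so $\zeta$ is a generator of the acceleration $\frac{D}{Dt}\dot q \in T_qQ$. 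The subtlety is that this alone only pins down the horizontal part $\operatorname{H}_q(\zeta)$: any generator of $\frac{D}{Dt}\dot q$ has the same horizontal part, namely $\bar{\mathbf{J}}\big(\frac{D}{Dt}\dot q\big)$ by part {\bf (iv)}, whereas the lemma asserts that the vertical part $\operatorname{V}_q(\zeta)$ vanishes. Establishing this vanishing is the crux.

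To control the vertical part I lift to $G$. Exactly as in the proof of part {\bf (iii)}, choose $g_0 \in \Pi^{-1}(q(0))$ and let $g(t)$ be the horizontal lift of $q(t)$ through $g_0$, so that $\dot g = \xi_G(g)$ and $\Pi(g) = q$. Since $g(t)$ is a horizontal lift, $\dot g(t)$ is the horizontal lift of $\dot q(t)$ to $T_gG$. By \eqref{Covariant_derivative_group_prop} the group acceleration is $\frac{D}{Dt}\dot g = (\partial_t\xi + \operatorname{ad}^\dagger_\xi\xi)_G(g) = \zeta_G(g) = TR_g\,\zeta$.

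The decisive input is the same computation that yields the horizontal lifting property of geodesics noted just after \eqref{Covariant_derivative_on_RS}: applying \eqref{Covariant_derivative_on_RS} to a local extension of the tangent field of the curve, the vertical correction term $\tfrac12[\,\cdot\,,\cdot\,]^V$ drops out because it is the bracket of a single vector field with itself, leaving $\frac{D}{Dt}\dot g = \tilde\nabla_{\dot g}\dot g = \widetilde{\nabla_{\dot q}\dot q}$, which is horizontal by construction of the lift. Hence $TR_g\,\zeta = \frac{D}{Dt}\dot g \in T^H_gG$. Finally I invoke \eqref{Relation_horizontal_spaces}, which identifies $T^H_gG = TR_g(\mathfrak{g}^H_q)$; since $TR_g$ is a linear isomorphism, $TR_g\,\zeta \in TR_g(\mathfrak{g}^H_q)$ forces $\zeta \in \mathfrak{g}^H_q$, which is precisely the claim.

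The main obstacle is thus conceptual rather than computational: one must resist concluding horizontality directly from \eqref{Covariant_derivative_momentum_map_prop}, since an equality of infinitesimal generators determines only horizontal parts. The real work is in routing the vertical part through the horizontal lift on $G$, where the vanishing of $[\tilde X,\tilde X]^V$ does the job. I would also take care that the convention $\xi_G(g)=TR_g\,\xi$ underlying part {\bf (i)} is exactly the one matching the identification $T^H_gG = TR_g(\mathfrak{g}^H_q)$ in \eqref{Relation_horizontal_spaces}, so that the final translation of horizontality from $T_gG$ back to $\mathfrak{g}$ is legitimate.
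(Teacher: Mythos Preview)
Your proof is correct and follows essentially the same route as the paper: lift $q(t)$ horizontally to $g(t)$ in $G$, use \eqref{Covariant_derivative_on_RS} to see that $\frac{D}{Dt}\dot g$ is horizontal, express $\frac{D}{Dt}\dot g = TR_g\,\zeta$ via \eqref{Covariant_derivative_group_prop}, and then invoke \eqref{Relation_horizontal_spaces} to conclude $\zeta\in\mathfrak{g}^H_q$. Your added remark that \eqref{Covariant_derivative_momentum_map_prop} alone does not suffice (since it only pins down the horizontal part of a generator) is a helpful clarification the paper leaves implicit.
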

\begin{proof} For any $g_0 \in \Pi^{-1}(q_0)$ define the horizontal curve $g(t) \in G$ by $g(0) = g_0$ and $\dot{g} = \left( \bar{\mathbf{J}}(\dot q)\right) _G(g)$. Then, by formula \eqref{Covariant_derivative_on_RS}, $\frac{D}{Dt} \dot{g}$ is horizontal. Moreover, by Proposition  \ref{Covariant_derivative_main_proposition},
\[
\frac{D}{Dt} \dot{g} = \left(\partial _t \bar{\mathbf{J}}(\dot q) + \operatorname{ad}^\dagger_{\bar{\mathbf{J}}(\dot q)}\bar{\mathbf{J}}(\dot q)\right) _G(g) = TR_g\left(\partial _t \bar{\mathbf{J}}(\dot q) + \operatorname{ad}^\dagger_{\bar{\mathbf{J}}(\dot q)}\bar{\mathbf{J}}(\dot q)\right).
\]
The statement of the lemma now follows from \eqref{Relation_horizontal_spaces}.
\end{proof}
This lemma enables us to rewrite the Lagrangian \eqref{Cubics_Lagrangian} of Riemannian cubics, evaluated along the curve $q(t)$, as follows,
\begin{align*}
  L(q, \dot{q}, \ddot{q}) = \frac{1}{2} \left\|\frac{D}{Dt} \dot{q}\right\|^2_q = \frac{1}{2} \left\|\left(\partial _t \bar{\mathbf{J}}(\dot q) + \operatorname{ad}^\dagger_{\bar{\mathbf{J}}(\dot q)}\bar{\mathbf{J}}(\dot q)\right)_Q(q)\right\|^2_q = \frac{1}{2} \left\|\partial _t \bar{\mathbf{J}}(\dot q) + \operatorname{ad}^\dagger_{\bar{\mathbf{J}}(\dot q)}\bar{\mathbf{J}}(\dot q)\right\|^2_{\mathfrak{g}}.
\end{align*}
Hamilton's principle \eqref{Hamiltons_principle_general} for Riemannian cubics is $\delta \mathcal{J} = 0$ with cost functional $\mathcal{J}$ of the form \eqref{Action_functional},
\begin{equation*}
  \mathcal{J}[q] = \int_0^1 \frac{1}{2} \left\|\frac{D}{Dt} \dot{q} \right\|^2_q \, dt = \int_0^1 \ell\left( \bar{\mathbf{J}}(\dot{q}), \partial_t \bar{\mathbf{J}}(\dot{q})\right) \, dt,
\end{equation*}
where
\begin{align*}
  \ell&: 2 \mathfrak{g} \rightarrow \mathbb{R}, \quad (\xi_1, \xi_2) \mapsto \frac{1}{2} \left\|\xi_2 + \operatorname{ad}^\dagger_{\xi_1} \xi_1\right\|^2_{\mathfrak{g}}.
\end{align*}
Remarkably, the function $\ell$ coincides with the reduced Lagrangian in the Euler--Poincar\'e reduction of Riemannian cubics on Lie groups, see \eqref{EP_reduced_Lagrangian} below. The variational derivatives of $\ell$ read
\begin{equation*}
  \frac{\delta \ell}{\delta \xi _1 } = (\operatorname{ad}_{\xi _1  }\eta)^\flat - \operatorname{ad}^*_\eta \xi _1 ^\flat, \quad \mbox{and} \quad \frac{\delta \ell}{\delta \xi _2 } = \eta^\flat, \qquad \text{where $\eta := \xi_2 + \operatorname{ad}^\dagger_{\xi_1} \xi_1$} 
\end{equation*}
Hence, the Euler-Lagrange equation 
\eqref{Euler_Lagrange_general} becomes
\begin{equation}\label{Spline_Euler_Lagrange_equations}
  \left[\left(\frac{\delta \bar{\mathbf{J}}}{\delta q}\right)^* - \frac{D}{Dt} \circ \left(\frac{\delta \bar{\mathbf{J}}}{\delta \dot{q}}\right)^*\right]  \left( -\partial_t \eta^\flat +  (\operatorname{ad}_{\bar{\mathbf{J}}}\eta)^\flat  - \operatorname{ad}^*_\eta \bar{\mathbf{J}}^\flat\right)  = 0, \quad \mbox{where} \quad \eta := \dot{\bar{\mathbf{J}}} + \operatorname{ad}^\dagger_{\bar{\mathbf{J}}} \bar{\mathbf{J}}.
\end{equation}
We emphasize that the covariant derivative $\frac{D}{Dt}$ is understood with respect to a chosen linear connection on the tangent bundle $TQ$. A possible choice is the Levi-Civita connection with respect to the normal metric $\gamma_Q$ on $Q$.

\subsection{Splines on Lie groups}
Riemannian cubics on Lie groups $(G, \gamma_G)$ with 
right-invariant metrics $\gamma_G$ were treated in 
\cite{Gay-BalmazEtAl2010} by second-order Euler--Poincar\'e reduction. Here we revisit the problem from the point of view 
of normal metrics.
\paragraph{Equations of motion.} We first observe that 
$\gamma_G$ is a particular case of a normal metric. Namely, let 
$(G, \gamma_G)$ act on $G$ by left multiplication. Then the normal metric induced on $G$ is again $\gamma_G$. The generator of a curve $g(t)$ is the right-invariant velocity vector, 
$\bar{\mathbf{J}}(g, \dot{g}) = TR_{g^{-1}}\dot{g}$. Choosing 
as linear connection on $TG$ the Levi-Civita connection of 
$\gamma_G$ one arrives, by Proposition 
\ref{Covariant_derivative_main_proposition} {\bf (ii)}, at
\begin{align}
&\left.\frac{\delta \bar{\mathbf{J}}}{\delta g}\right|_{(g, 
\dot{g})} : \delta g \mapsto 
-\frac{1}{2}\operatorname{ad}^\dagger_\eta \xi 
- \frac{1}{2} \operatorname{ad}^\dagger_\xi\eta 
- \frac{1}{2} [\xi, \eta], \quad \mbox{where} \quad 
\xi:= TR_{g^{-1}}\dot{g}, \, \eta := TR_{g^{-1}} \delta g,\nonumber \\
&\left(\left.\frac{\delta \bar{\mathbf{J}}}{\delta g}\right|_{(g,\dot{g})}\right)^*: \mu \mapsto (TR_{g^{-1}})^* 
\left(\frac{1}{2} \operatorname{ad}^*_{\mu^\sharp}\xi^\flat - \frac{1}{2} (\operatorname{ad}_\xi\mu^\sharp)^\flat 
- \frac{1}{2}\operatorname{ad}^*_\xi\mu\right), \quad 
\mbox{where} \quad \xi := TR_{g^{-1}}\dot{g}, \nonumber \\
&\left.\frac{\delta \bar{\mathbf{J}}}{\delta \dot{g}}\right|_{(g, \dot{g})} : v_g \mapsto TR_{g^{-1}} v_g \quad 
\mbox{for any} \quad v_g \in T_gG, \quad \mbox{and} \nonumber \\
&\left(\left.\frac{\delta \bar{\mathbf{J}}}{\delta \dot{g}}\right|_{(g, \dot{g})} \right)^*: \mu \mapsto (TR_{g^{-1}})^*\mu.\nonumber
\end{align}
Using now the expression for $\frac{D}{Dt}$ given in 
\eqref{Covariant_derivative_covf_prop} we get 
\begin{equation}
    \left[\left(\frac{\delta \bar{\mathbf{J}}}{\delta g}\right)^* - \frac{D}{Dt} \circ \left(\frac{\delta \bar{\mathbf{J}}}{\delta \dot{g}}\right)^*\right]\mu = (TR_{g^{-1}})^* \big(-\partial_t - \operatorname{ad}^*_{\bar{J}})\mu
\end{equation}
for any curves $g(t) \in G$ and $\mu(t) \in \mathfrak{g}^*$.
The Euler-Lagrange equations \eqref{Spline_Euler_Lagrange_equations} are therefore
\begin{equation}
  (TR_{g^{-1}})^* \left( \partial_t + \operatorname{ad}^*_{\bar{\mathbf{J}}}\right)  [\partial_t \eta^\flat -  (\operatorname{ad}_{\bar{\mathbf{J}}}\eta)^\flat  + \operatorname{ad}^*_\eta \bar{\mathbf{J}}^\flat] = 0, \quad \mbox{where} \quad \eta := \dot{\bar{\mathbf{J}}} + \operatorname{ad}^\dagger_{\bar{\mathbf{J}}} \bar{\mathbf{J}}.
\end{equation}
 This is equivalent to
\begin{equation}\label{Spline_equation_Lie_group_normal_metrics}
\left( \partial_t + \operatorname{ad}^*_{\bar{\mathbf{J}}}\right)  [\partial_t \eta^\flat -  (\operatorname{ad}_{\bar{\mathbf{J}}}\eta)^\flat  + \operatorname{ad}^*_\eta \bar{\mathbf{J}}^\flat] = 0, \quad \mbox{where} \quad \eta := \dot{\bar{\mathbf{J}}} + \operatorname{ad}^\dagger_{\bar{\mathbf{J}}} \bar{\mathbf{J}},
\end{equation}
and $\bar{ \mathbf{J} }:= \bar{\mathbf{J}}(g, \dot g)= \dot g g ^{-1} $.
\paragraph{Bi-invariance and the NHP equation.} If the metric $\gamma_G$ is bi-invariant, then $\operatorname{ad}^\dagger = -\operatorname{ad}$, and the above expressions simplify. Namely, $\eta = \dot{\bar{\mathbf{J}}}$ and 
\eqref{Spline_equation_Lie_group_normal_metrics} is 
equivalent to the NHP equation
\begin{equation}\label{NHP_equation}
  \dddot{\bar{\mathbf{J}}} + [\ddot{\bar{\mathbf{J}}}, \bar{\mathbf{J}}] = 0.
\end{equation}
This equation was originally derived in \cite{NoHePa1989} for the special case of $G = SO(3)$ with bi-invariant metric. It can be integrated once to yield
\begin{equation}\label{NHP_equation_integrated_once}
  \ddot{\bar{\mathbf{J}}} + [\dot{\bar{\mathbf{J}}}, \bar{\mathbf{J}}] = \nu
\end{equation}
for a constant $\nu \in \mathfrak{g}$. Solutions 
$\bar{\mathbf{J}}$ were called \emph{Lie quadratics} in 
\cite{No2003, No2004}. For $G = SO(3)$, long-term behavior and internal symmetries of Lie quadratics were studied there, both in the \emph{null} ($\nu = 0$) and the \emph{non-null} ($\nu \neq 0$) case. Generalizations to cubics in tension can be found in \cite{NoPo2005}.
\paragraph{Euler--Poincar\'{e} reduction.} We include a discussion of second-order Euler--Poincar\'e reduction and, in particular, the Riemannian cubics in this context. More details and higher-order generalizations can be found in 
\cite{Gay-BalmazEtAl2010}. The first-order case is discussed, for example, in \cite{MaRa1994}. Start with a right-invariant Lagrangian $L: T^{(2)}G \rightarrow \mathbb{R}$ with reduced Lagrangian $\ell: 2 \mathfrak{g} \rightarrow \mathbb{R}$. Consider Hamilton's principle
\begin{equation}\label{Hamiltons_Principle_on_group}
  \delta \int_0^1 L(g, \dot{g}, \ddot{g}) \, dt = 0
\end{equation}
with respect to variations of curves $g(t): [0, 1] \rightarrow G$ respecting boundary conditions $\delta g(0) = \delta g(1) = 0$ and $\delta \dot{g}(0) = \delta \dot{g} (1) = 0$. The right-invariance of $L$ leads to the equivalent reduced formulation
\begin{equation}\label{Reduced_Hamiltons_Principle_on_group}
  \delta \int_0^1 \ell(\xi, \dot{\xi}) \, dt = 0,
\end{equation}
with respect to constrained variations of curves $\xi(t): [0, 1] \rightarrow \mathfrak{g}$. Namely, one considers variations of the form $\delta \xi = \dot{\eta} - [\xi, \eta]$ with $\eta(t): [0, 1] \rightarrow \mathfrak{g}$ arbitrary up to boundary conditions $\eta(0) = \eta(1) = 0$ and $\dot{\eta} (0) = \dot{\eta}(1) = 0$. Solutions $g(t)$ of 
\eqref{Hamiltons_Principle_on_group} and solutions $\xi(t)$ of  
\eqref{Reduced_Hamiltons_Principle_on_group} are equivalent through the reconstruction relation $\xi = TR_{g^{-1}} \dot{g} 
= \bar{\mathbf{J}}(g, \dot{g})$.\newline
Taking constrained variations of \eqref{Reduced_Hamiltons_Principle_on_group} leads to the second-order Euler--Poincar\'e equation
\begin{equation}\label{Second_order_EP}
  \left(\frac{d}{dt} + \operatorname{ad}^*_\xi\right) \left(\frac{\delta \ell}{\delta \xi} - \frac{d}{dt} \frac{\delta \ell}{\delta \dot{\xi}}\right) = 0.
\end{equation}
For a curve $g(t) \in G$ with right-invariant velocity vector 
$\xi(t) = TR_{g^{-1}}\dot{g}$ we rewrite the Lagrangian of cubics,
\begin{equation*}
 L(g, \dot{g}, \ddot{g}) = \frac{1}{2} 
 \left\|\frac{D}{Dt}\dot{g}\right\|_{g}^2 
 = \frac{1}{2} \left\|(\dot{\xi}+ 
 \operatorname{ad}^\dagger_\xi\xi)_G(g)\right\|_g^2 
 = \frac{1}{2} \left\|\dot{\xi} 
 + \operatorname{ad}^\dagger_\xi\xi\right\|^2_{\mathfrak{g}},
\end{equation*}
where in the second equality we used 
\eqref{Covariant_derivative_group_prop} and the third equality follows from right-invariance of $\gamma_G$. This demonstrates that the spline Lagrangian $L$ only depends on the right-invariant velocity $\xi$ and its time-derivative $\dot{\xi}$ and is therefore right-invariant. The reduced Lagrangian can be read off as
\begin{equation}\label{EP_reduced_Lagrangian}
  \ell(\xi, \dot{\xi}) = \frac{1}{2} \left\|\dot{\xi} + \operatorname{ad}^\dagger_\xi\xi\right\|^2_{\mathfrak{g}}.
\end{equation}
The dynamics are governed by the second-order Euler--Poincar\'e equation \eqref{Second_order_EP}, which becomes, for $\ell$ as above,
\begin{equation*}
  (\partial_t + \operatorname{ad}^*_\xi) [\partial_t \eta^\flat -  (\operatorname{ad}_{\xi}\eta)^\flat  + \operatorname{ad}^*_\eta \xi^\flat] = 0, \quad \mbox{where} \quad \eta := \dot{\xi} + \operatorname{ad}^\dagger_\xi\xi.
\end{equation*}
This coincides with 
\eqref{Spline_equation_Lie_group_normal_metrics}, since 
$\xi = TR_{g^{-1}} \dot{g} = \bar{\mathbf{J}}(g, \dot{g})$.
\subsection{Splines on symmetric spaces}\label{Sec-Splines_on_symmetric_spaces}
We particularize the equation for Riemannian cubics \eqref{Spline_Euler_Lagrange_equations} to symmetric spaces. Due to the appearance of the horizontal generator, this equation lends itself to the analysis of horizontal lifting properties to be addressed in Section \ref{Sec-Horizontal_lifts}. We also comment on how it is related to the equation derived in \cite{CrSL1995}. 
\paragraph{The horizontal generator.} Recall that for any curve $g(t)$ in a Lie group $G$ with $\dot{g} = TR_g \xi$ for $\xi(t) \in \mathfrak{g}$ and any curve $\nu(t) \in \mathfrak{g}$,
\begin{equation}\label{Time_derivative_Ad}
  \frac{d}{dt} \operatorname{Ad}_{g^{-1}}\nu = \operatorname{Ad}_{g^{-1}}\left(\dot{\nu} + [\nu, \xi]\right).
\end{equation}
Let $G$ be a Lie group with a bi-invariant metric $\gamma$ that acts transitively on a manifold $Q$ equipped with the normal metric $\gamma_Q$, so that the action is by isometries. Denote by $G_a$ the isotropy subgroup of a fixed element $a \in Q$, so that $Q$ is diffeomorphic to $G/G_a$, the quotient being taken with respect to the right-action of $G_a$ on $G$.  Recall the Riemannian submersion $\Pi: G \rightarrow Q$ given by $g \mapsto ga$. Note that for any 
$g \in G$ with $ga = q$,
  \begin{align}\label{Ad_relations_projections_dash}
     \operatorname{Ad}_g \circ \operatorname{H}_a \circ \operatorname{Ad}_g^{-1} = \operatorname{H}_q \quad \mbox{and} \quad \operatorname{Ad}_g \circ \operatorname{V}_a \circ \operatorname{Ad}_g^{-1} = \operatorname{V}_q\,.
\end{align}  
\begin{lemma} Let $g(t) \in G$ be a horizontal lift of a curve $q(t) \in Q$, and let $\bar{\mathbf{J}} := \bar{\mathbf{J}}(\dot{q})$ be the horizontal generator of $q(t)$. Then,
  \begin{align}
    &\operatorname{V}_q\left(\bar{\mathbf{J}}\right) = 0, \quad \operatorname{V}_q\left(\dot{\bar{\mathbf{J}}}\right) = 0, \quad \operatorname{V}_q\left(\ddot{\bar{\mathbf{J}}} + \left[ \dot{\bar{\mathbf{J}}}, \bar{\mathbf{J}}\right]\right) = 0,\label{Horizontality_relations_1}\\
 & \operatorname{V}_q\left(\dddot{\bar{\mathbf{J}}} + 2 \left[\ddot{\bar{\mathbf{J}}}, \bar{\mathbf{J}}\right] + \left[\left[\dot{\bar{\mathbf{J}}}, \bar{\mathbf{J}}\right], \bar{\mathbf{J}}\right]\right) = 0. \label{Horizontality_relations_2}
  \end{align}
\end{lemma}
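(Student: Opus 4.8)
The plan is to transfer the whole problem to the \emph{fixed} horizontal subspace $\mathfrak{g}^H_a$ at the reference point $a$ by conjugating with $\operatorname{Ad}_{g(t)^{-1}}$. Recall that the horizontal lift $g(t)$ satisfies $\dot g = (\bar{\mathbf{J}})_G(g) = TR_g\bar{\mathbf{J}}$, so that the relevant velocity in \eqref{Time_derivative_Ad} is $\xi = \bar{\mathbf{J}}$. The first identity $\operatorname{V}_q(\bar{\mathbf{J}}) = 0$ is immediate, since by \eqref{Momentum_map_horizontal_generator_prop} the horizontal generator already satisfies $\bar{\mathbf{J}}(t) \in \mathfrak{g}^H_{q(t)}$. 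For the remaining identities I would introduce the curve
\begin{equation*}
  \Lambda(t) := \operatorname{Ad}_{g(t)^{-1}}\bar{\mathbf{J}}(t).
\end{equation*}
Because $q(t) = g(t)a$, the conjugation relation \eqref{Ad_relations_projections_dash} gives $\mathfrak{g}^H_{q(t)} = \operatorname{Ad}_{g(t)}(\mathfrak{g}^H_a)$, and hence $\Lambda(t) \in \mathfrak{g}^H_a$ for every $t$. The decisive point is that $\mathfrak{g}^H_a$ is a \emph{time-independent} linear subspace, so every time derivative $\Lambda^{(j)}(t)$ again lies in $\mathfrak{g}^H_a$.

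The computation of these derivatives then uses only \eqref{Time_derivative_Ad} with $\xi = \bar{\mathbf{J}}$ together with the trivial identity $[\bar{\mathbf{J}}, \bar{\mathbf{J}}] = 0$. Differentiating once gives $\dot\Lambda = \operatorname{Ad}_{g^{-1}}\dot{\bar{\mathbf{J}}}$; differentiating again produces $\ddot\Lambda = \operatorname{Ad}_{g^{-1}}\big(\ddot{\bar{\mathbf{J}}} + [\dot{\bar{\mathbf{J}}}, \bar{\mathbf{J}}]\big)$; and a third application of \eqref{Time_derivative_Ad}, applied to $W := \ddot{\bar{\mathbf{J}}} + [\dot{\bar{\mathbf{J}}}, \bar{\mathbf{J}}]$, yields
\begin{equation*}
  \dddot\Lambda = \operatorname{Ad}_{g^{-1}}\big(\dddot{\bar{\mathbf{J}}} + 2[\ddot{\bar{\mathbf{J}}}, \bar{\mathbf{J}}] + [[\dot{\bar{\mathbf{J}}}, \bar{\mathbf{J}}], \bar{\mathbf{J}}]\big),
\end{equation*}
where the factor $2$ arises from combining the $[\ddot{\bar{\mathbf{J}}}, \bar{\mathbf{J}}]$ term that appears in $\dot W$ with the one that appears in $[W, \bar{\mathbf{J}}]$.

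Since each of $\dot\Lambda,\ \ddot\Lambda,\ \dddot\Lambda$ lies in the fixed subspace $\mathfrak{g}^H_a$, applying $\operatorname{Ad}_g$ and using $\operatorname{Ad}_g(\mathfrak{g}^H_a) = \mathfrak{g}^H_q$ shows that the three Lie-algebra combinations $\dot{\bar{\mathbf{J}}}$, $\ddot{\bar{\mathbf{J}}} + [\dot{\bar{\mathbf{J}}}, \bar{\mathbf{J}}]$, and $\dddot{\bar{\mathbf{J}}} + 2[\ddot{\bar{\mathbf{J}}}, \bar{\mathbf{J}}] + [[\dot{\bar{\mathbf{J}}}, \bar{\mathbf{J}}], \bar{\mathbf{J}}]$ are all horizontal at $q$, which is exactly \eqref{Horizontality_relations_1}--\eqref{Horizontality_relations_2}. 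I expect no serious obstacle in the calculus once this framework is in place; the only genuinely substantive step is recognizing that bi-invariance of $\gamma$, through \eqref{Ad_relations_projections_dash}, renders $\operatorname{Ad}_{g^{-1}}\bar{\mathbf{J}}$ a curve in the constant subspace $\mathfrak{g}^H_a$. After that the whole lemma reduces to repeated differentiation and the elementary fact that the derivatives of a curve lying in a fixed linear subspace remain in that subspace.
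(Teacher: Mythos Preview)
Your proposal is correct and follows essentially the same approach as the paper: both transfer the problem to the fixed subspace $\mathfrak{g}^H_a$ via $\operatorname{Ad}_{g^{-1}}$, observe that the resulting curve lies in a time-independent linear subspace, and then differentiate repeatedly using \eqref{Time_derivative_Ad}. Your version is in fact slightly more detailed, since you write out explicitly the second and third derivatives of $\Lambda$, whereas the paper simply states that the last two identities follow from ``taking two more time derivatives.''
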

\begin{proof}
The first equation follows from horizontality of $\bar{\mathbf{J}}$. It is equivalent to $\operatorname{V}_a\left(\operatorname{Ad}_{g^{-1}}\bar{\mathbf{J}}\right) = 0$. The second equation follows from taking a time derivative of this last relation using \eqref{Time_derivative_Ad}. Note that $\dot{g} =TR_g \bar{\mathbf{J}}$. For instance, 
\[
  \partial_t \left(\operatorname{V}_a\left(\operatorname{Ad}_{g^{-1}}\bar{\mathbf{J}}\right)\right) = \operatorname{V}_a\left(\operatorname{Ad}_{g^{-1}} \dot{\bar{\mathbf{J}}}\right) = 0.
\]
Therefore,  $\operatorname{V}_q\left(\dot{\bar{\mathbf{J}}}\right) = 0$. The third and fourth equations follow from taking two more time derivatives.
\end{proof}
\paragraph{Symmetric spaces.} Assume in addition that $\sigma$ is an involutive Lie algebra automorphism of $\mathfrak{g}$ such that $\mathfrak{g}_a^V$ and $\mathfrak{g}_a^H$ are, respectively, the $+1$ and $-1$ eigenspaces. Then $(G, Q, \sigma)$ is a \emph{symmetric space structure}. The following inclusions hold for all $q \in Q$,
\begin{equation}\label{Symmetric_space_inclusions_dash}
  \left[\mathfrak{g}_q^V, \mathfrak{g}_q^V\right] \subset \mathfrak{g}_q^V, \quad  \left[\mathfrak{g}_q^H, \mathfrak{g}_q^V\right] \subset \mathfrak{g}_q^H, \quad  \left[\mathfrak{g}_q^H, \mathfrak{g}_q^H\right] \subset \mathfrak{g}_q^V.
\end{equation}
The first identity follows because $\mathfrak{g}_q^V$ is a Lie subalgebra of $\mathfrak{g}$. The second one is a consequence of the Ad-invariance of the inner product on $\mathfrak{g}$. The third one is characteristic of symmetric spaces. It follows from the eigenspace structure of $\sigma$ described above. 
 We now compute the the Euler-Lagrange equation of cubics, \eqref{Spline_Euler_Lagrange_equations}. We will find that it is equivalent to
   \begin{equation}\label{Cubic_symmetric_spaces_dash}
  \operatorname{H}_q\left(\dddot{\bar{\mathbf{J}}} + 2\left[\ddot{\bar{\mathbf{J}}}, \bar{\mathbf{J}}\right]\right) = 0.
\end{equation}
We start with a lemma.
\begin{lemma}\label{DeltaJ_deltaq}
Relative to the Levi-Civita connection on $TQ$,
  \begin{equation}
  \left<\left.\frac{\delta \bar{\mathbf{J}}}{\delta q}\right|_{(q, \dot{q})}, \delta q\right> = \left[\bar{\mathbf{J}}(\delta q), \bar{\mathbf{J}}(\dot{q})\right], \quad \mbox{and} \quad \left<\left.\frac{\delta \bar{\mathbf{J}}}{\delta \dot{q}}\right|_{(q, \dot{q})}, \delta q\right> = \bar{\mathbf{J}}(\delta q).\nonumber
  \end{equation}
\end{lemma}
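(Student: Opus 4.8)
The plan is to treat the two identities separately, since the fiber derivative $\frac{\delta \bar{\mathbf{J}}}{\delta \dot{q}}$ is essentially trivial while the spatial derivative $\frac{\delta \bar{\mathbf{J}}}{\delta q}$ carries all the geometric content. For the second identity, recall from \eqref{Momentum_map_horizontal_generator_prop} that the restriction of $\bar{\mathbf{J}}$ to a single fiber $T_qQ$ is \emph{linear}: it is the composite $v_q \mapsto (\mathbf{J}(v_q^\flat))^\sharp$ of the fiberwise-linear momentum map with the musical isomorphisms, equivalently the inverse of the linear isomorphism $\mathfrak{g}^H_q \ni \xi \mapsto \xi_Q(q) \in T_qQ$. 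Varying $v_q = \dot q$ purely in the fiber direction $\delta q$ therefore gives $\frac{d}{d\varepsilon}\big|_0 \bar{\mathbf{J}}(\dot q + \varepsilon\,\delta q) = \bar{\mathbf{J}}(\delta q)$, which is exactly $\langle \frac{\delta \bar{\mathbf{J}}}{\delta \dot q}\big|_{(q,\dot q)}, \delta q\rangle$. For the first identity I unwind the definition: $\langle \frac{\delta \bar{\mathbf{J}}}{\delta q}\big|_{(q,\dot q)}, \delta q\rangle = \frac{d}{d\varepsilon}\big|_0 \bar{\mathbf{J}}(v(\varepsilon))$, where $v(\varepsilon)$ realizes the horizontal lift $(\delta q)^H_{\dot q}$ of the Levi-Civita connection, i.e. $v(\varepsilon)$ is the parallel transport of $\dot q$ along a base curve $c(\varepsilon)$ with $c(0)=q$, $c'(0)=\delta q$. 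Setting $\xi(\varepsilon) := \bar{\mathbf{J}}(v(\varepsilon)) \in \mathfrak{g}^H_{c(\varepsilon)}$, I must compute $\xi'(0)$, and I do so by splitting it into its $\mathfrak{g}^H_q$ and $\mathfrak{g}^V_q$ parts.

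For the vertical part I use the $\operatorname{Ad}$-equivariance of the horizontal generator, $\bar{\mathbf{J}}(T\Phi_h w) = \operatorname{Ad}_h \bar{\mathbf{J}}(w)$, which follows from equivariance of infinitesimal generators together with \eqref{Ad_relations_projections_dash}. Choosing $g(\varepsilon)$ to be the horizontal lift of $c(\varepsilon)$ through some $g_0 \in \Pi^{-1}(q)$, I write $\xi(\varepsilon) = \operatorname{Ad}_{g(\varepsilon)}\zeta(\varepsilon)$ with $\zeta(\varepsilon) := \operatorname{Ad}_{g(\varepsilon)^{-1}}\bar{\mathbf{J}}(v(\varepsilon)) \in \mathfrak{g}^H_a$. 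Differentiating by the Leibniz rule and using $g'(0)g_0^{-1} = \bar{\mathbf{J}}(\delta q)$ — the right-trivialized velocity of the horizontal lift is the horizontal generator of $\delta q$ by \eqref{Commutativity} and \eqref{Relation_horizontal_spaces} — yields $\xi'(0) = [\bar{\mathbf{J}}(\delta q), \bar{\mathbf{J}}(\dot q)] + \operatorname{Ad}_{g_0}\zeta'(0)$. Since $\zeta(\varepsilon)$ stays in the fixed subspace $\mathfrak{g}^H_a$, the remainder $\operatorname{Ad}_{g_0}\zeta'(0)$ lies in $\operatorname{Ad}_{g_0}\mathfrak{g}^H_a = \mathfrak{g}^H_q$, whereas $[\bar{\mathbf{J}}(\delta q), \bar{\mathbf{J}}(\dot q)] \in [\mathfrak{g}^H_q, \mathfrak{g}^H_q] \subset \mathfrak{g}^V_q$ by the characteristic symmetric-space inclusion \eqref{Symmetric_space_inclusions_dash}. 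Reading off the vertical component therefore already gives $\operatorname{V}_q(\xi'(0)) = [\bar{\mathbf{J}}(\delta q), \bar{\mathbf{J}}(\dot q)]$, independently of $\zeta'(0)$.

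It remains to show that the horizontal remainder $\operatorname{Ad}_{g_0}\zeta'(0) = \operatorname{H}_q(\xi'(0))$ vanishes, and this is the main obstacle. Applying the infinitesimal generator map to $\xi'(0)$ and invoking the identities \eqref{desiredformula1}--\eqref{desiredformula2} proved inside Proposition \ref{Covariant_derivative_main_proposition} (with the roles of $q(t)$ and $\eta$ played by $c(\varepsilon)$ and $\xi$), I obtain $(\xi'(0))_Q(q) = -\nabla_{\delta q}(\bar{\mathbf{J}}(\dot q))_Q$; since the vertical part is annihilated by the generator map, this identifies $\operatorname{H}_q(\xi'(0))$ with the horizontal generator of $-\nabla_{\delta q}(\bar{\mathbf{J}}(\dot q))_Q$. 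Everything thus reduces to the statement that a Killing field $\zeta_Q$ generated by a horizontal element is parallel at the base point, $\nabla_{v_q}\zeta_Q = 0$ for all $\zeta \in \mathfrak{g}^H_q$ and $v_q \in T_qQ$ — the one genuinely symmetric-space input, which fails for general normal homogeneous spaces. I would establish it using the geodesic symmetry $s_q$, an isometry fixing $q$ with $T_q s_q = -\operatorname{id}$ that reverses horizontal generators, $(s_q)_*\zeta_Q = -\zeta_Q$ for $\zeta \in \mathfrak{g}^H_q$ (precisely the $-1$-eigenspace condition defining $\sigma$); since $s_q$ preserves $\nabla$, evaluating the tensor $\nabla\zeta_Q$ at the fixed point forces it to vanish. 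Equivalently, and more in the spirit of the present computation, one checks via the Riemannian submersion and bi-invariance that $\nabla_{u_Q}\zeta_Q\big|_q = -\frac{1}{2}\operatorname{H}_q[\bar{\mathbf{J}}(u_Q(q)), \zeta]$, which is zero exactly because $[\mathfrak{g}^H_q, \mathfrak{g}^H_q] \subset \mathfrak{g}^V_q$. With the horizontal part killed, $\xi'(0) = [\bar{\mathbf{J}}(\delta q), \bar{\mathbf{J}}(\dot q)]$, which completes the first identity.
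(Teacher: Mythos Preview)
Your argument is correct but takes a genuinely different route from the paper. The paper proceeds in the opposite direction: it \emph{guesses} the answer by setting $\omega(\varepsilon) := \bar{\mathbf{J}}(\dot q) + \varepsilon\,[\bar{\mathbf{J}}(\delta q),\bar{\mathbf{J}}(\dot q)]$, defines $X(\varepsilon) := TR_{g(\varepsilon)}\omega(\varepsilon)$ along a horizontal lift $g(\varepsilon)$ of a base curve, and then \emph{verifies} that the projected vector field $v(\varepsilon) := T\Pi(X(\varepsilon))$ is parallel to first order. That verification is a one-line application of \eqref{Covariant_derivative_vf_prop} and \eqref{Covariant_derivative_on_RS}: $\widetilde{\nabla}_{\delta g}X = \big(\tfrac{1}{2}[\bar{\mathbf{J}}(\delta q),\bar{\mathbf{J}}(\dot q)]\big)_G(g)$ is vertical, hence $\nabla_{\delta q}v = 0$, and since $\omega(\varepsilon) \in \mathfrak{g}^H_{q(\varepsilon)}$ to first order one reads off $\bar{\mathbf{J}}(v(\varepsilon)) = \omega(\varepsilon)$ and differentiates. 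You instead start from the abstract parallel transport, split $\xi'(0)$ via $\operatorname{Ad}$-differentiation, and identify the vanishing of the horizontal remainder with the classical symmetric-space statement that Killing fields generated by $\mathfrak{g}^H_q$ are covariantly parallel at $q$. Your approach makes explicit \emph{which} symmetric-space fact is doing the work and explains why the formula must fail for general normal homogeneous spaces; the paper's approach is shorter and avoids invoking that fact by name, since the needed computation $\widetilde{\nabla}_{\delta g}X \in T^V_gG$ already encodes it.

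One small caution: the identities \eqref{desiredformula1}--\eqref{desiredformula2} you cite from the proof of Proposition~\ref{Covariant_derivative_main_proposition} are established there for curves $\eta(t)$ in the \emph{vertical} spaces $\mathfrak{g}^V_{q(t)}$ (the proof uses $\eta(0)_Q(q)=0$), not horizontal ones, so they do not apply verbatim to your $\xi(\varepsilon)\in\mathfrak{g}^H_{c(\varepsilon)}$. What you actually need is the elementary Leibniz identity $0 = \tfrac{D}{D\varepsilon}\big|_0 (\xi(\varepsilon))_Q(c(\varepsilon)) = (\xi'(0))_Q(q) + \nabla_{\delta q}(\xi(0))_Q$, which holds for any curve $\xi(\varepsilon)$ in $\mathfrak{g}$ and follows immediately in coordinates; with that substitution your argument is complete.
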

\begin{proof}
The second statement is due to the linearity of $\bar{\mathbf{J}}$ on fibers of $TQ$. In order to prove the first equation, let $q(\varepsilon)$ be a curve with $q(0) = q$ and $\partial_{\varepsilon = 0} q(\varepsilon) = \delta q$, and let $g(\varepsilon)$ with $g(0) = g$ be horizontal above $q(\varepsilon)$. We construct the parallel transport of $\dot{q}$ along $q(\varepsilon)$. Define
  \begin{equation}
    \omega(\varepsilon) := \bar{\mathbf{J}}(\dot{q}) + \varepsilon \left[\bar{\mathbf{J}}(\delta q), \bar{\mathbf{J}}(\dot{q})\right], \quad X(\varepsilon) := TR_{g(\varepsilon)} \omega(\varepsilon).
  \end{equation}
To first order in $\varepsilon$,  $X(\varepsilon)$ is a horizontal vector field along $g(\varepsilon)$ and $\omega(\varepsilon)$ lies in $\mathfrak{g}_{q(\varepsilon)}^H$. Now define a vector field along $q(\varepsilon)$ by $v(\varepsilon):= T_{g(\varepsilon)}\Pi (X(\varepsilon))$. Note that to first order in $\varepsilon$, $\bar{\mathbf{J}}(v(\varepsilon)) = \omega(\varepsilon)$. Denoting by $\widetilde{\nabla}$ the covariant derivative on $G$ with respect to the Levi-Civita connection of $\gamma$, we use \eqref{Covariant_derivative_vf_prop} to get
\begin{align}
  \widetilde{\nabla}_{\delta g} X = \left(\frac{1}{2} \left[\bar{\mathbf{J}}(\delta q), \bar{\mathbf{J}}(\dot{q})\right]\right)_G(g) = \widetilde{\nabla_{ \delta q}v} + \frac{1}{2}\left[ \delta g , X\right]^V, \nonumber
\end{align}
where we used \eqref{Covariant_derivative_on_RS} in the second step. Recall that $\left[\bar{\mathbf{J}}(\delta q), \bar{\mathbf{J}}(\dot{q})\right]$ is in $\mathfrak{g}_q^V$, so that applying $T\Pi$ to the above shows $\nabla_{\delta q}v = 0$. To first order in $\varepsilon$,  $v(\varepsilon)$ is therefore the parallel transport of $\dot{q}$ along $q(\varepsilon)$, and
\begin{equation}
  \left<\left.\frac{\delta \bar{\mathbf{J}}}{\delta q}\right|_{(q, \dot{q})}, \delta q\right> = \left.\frac{d}{d\varepsilon}\right|_{\varepsilon = 0}\bar{\mathbf{J}}(v(\varepsilon))= \left.\frac{d}{d\varepsilon}\right|_{\varepsilon = 0} \omega(\varepsilon)  = \left[\bar{\mathbf{J}}(\delta q), \bar{\mathbf{J}}(\dot{q})\right].\nonumber
  \end{equation}
\end{proof}
It follows from this and from Lemma \ref{DeltaJ_deltaq} that
\begin{align}
  \left(\left.\frac{\delta \bar{\mathbf{J}}}{\delta q}\right|_{(q, \dot{q})}\right)^*: \mu \mapsto \left(\left(\left[\bar{\mathbf{J}}(\dot{q}), \mu^\sharp\right]\right)_Q(q)\right)^\flat, \quad  \left(\left.\frac{\delta \bar{\mathbf{J}}}{\delta \dot{q}}\right|_{(q, \dot{q})}\right)^*: \mu \mapsto \left(\left(\mu^\sharp\right)_Q(q)\right)^\flat, 
\end{align}
for any $\mu \in \mathfrak{g}^*$. Taking the sharp of \eqref{Spline_Euler_Lagrange_equations} therefore gives
\begin{align}\label{EL_sym_sp_1}
  \left(\left[\bar{\mathbf{J}}, \ddot{\bar{\mathbf{J}}}\right]\right)_Q(q) - \frac{D}{Dt} \left[\left(\ddot{\bar{\mathbf{J}}}\right)_Q(q)\right] = 0.
\end{align}
It follows from \eqref{Horizontality_relations_1} and relations \eqref{Symmetric_space_inclusions_dash} that $\ddot{\bar{\mathbf{J}}} + \left[\dot{\bar{\mathbf{J}}}, \bar{\mathbf{J}}\right]$ is the horizontal generator of $\big(\ddot{\bar{\mathbf{J}}}\big)_Q(q)$ and therefore
\begin{align}
  & \frac{D}{Dt} \left[\left(\ddot{\bar{\mathbf{J}}}\right)_Q(q)\right] =  \frac{D}{Dt} \left[\left(\ddot{\bar{\mathbf{J}}} + \left[\dot{\bar{\mathbf{J}}}, \bar{\mathbf{J}}\right]\right)_Q(q)\right] \nonumber \\ &\qquad =  \left(\dddot{\bar{\mathbf{J}}} + \frac{3}{2}\left[\ddot{\bar{\mathbf{J}}}, \bar{\mathbf{J}}\right] - \frac{1}{2} \left[\bar{\mathbf{J}}, \left[\dot{\bar{\mathbf{J}}}, \bar{\mathbf{J}}\right]\right]\right)_Q(q)
= \left(\dddot{\bar{\mathbf{J}}} +\left[\ddot{\bar{\mathbf{J}}}, \bar{\mathbf{J}}\right]\right)_Q(q)\nonumber,
\end{align}
where in the last equality we used \eqref{Symmetric_space_inclusions_dash}, namely
\begin{equation} 
\big(\big[\bar{\mathbf{J}}, \big[\dot{\bar{\mathbf{J}}}, \bar{\mathbf{J}}\big]\big]\big)_Q(q) = -\big(\big[\bar{\mathbf{J}}, \operatorname{V}_q\big(\ddot{\bar{\mathbf{J}}}\big)\big]\big)_Q(q) =  -\big(\big[\bar{\mathbf{J}}, \ddot{\bar{\mathbf{J}}}\big]\big)_Q(q).
\end{equation}
Therefore, \eqref{EL_sym_sp_1} becomes \eqref{Cubic_symmetric_spaces_dash},
\begin{equation}\label{Cubic_symmetric_spaces}
  \operatorname{H}_q\left(\dddot{\bar{\mathbf{J}}} + 2\left[\ddot{\bar{\mathbf{J}}}, \bar{\mathbf{J}}\right]\right) = 0.
\end{equation}
We will exploit the similarity of this equation with the NHP equation \eqref{NHP_equation} when we analyze the horizontal lifts of cubics in the next section.
\begin{remark}{\rm The equations for Riemannian cubics in symmetric spaces were first given in \cite{CrSL1995}. We briefly remark on how those equations are related to \eqref{Cubic_symmetric_spaces}. We first note that \eqref{Horizontality_relations_2} together with \eqref{Symmetric_space_inclusions_dash} show that the vertical part of $\dddot{\bar{\mathbf{J}}} + 2\left[\ddot{\bar{\mathbf{J}}}, \bar{\mathbf{J}}\right]$ vanishes. Hence,
    \begin{equation}\label{Cubic_symmetric_spaces_augmented}
      \dddot{\bar{\mathbf{J}}} + 2\left[\ddot{\bar{\mathbf{J}}}, \bar{\mathbf{J}}\right] = 0.
\end{equation}
Let $g(t)$ be a horizontal lift of a Riemannian cubic $q(t)$, and define $\mathbf{V}(t) \in \mathfrak{g}_a^H$ by 
\begin{equation}
  \mathbf{V} = \operatorname{Ad}_{g^{-1}}\bar{\mathbf{J}}.
\end{equation}
One checks easily that \eqref{Cubic_symmetric_spaces_augmented} implies
\begin{equation}
  \dddot{\mathbf{V}} + \left[\mathbf{V}, \left[\dot{\mathbf{V}}, \mathbf{V}\right]\right] = 0.
\end{equation}
This coincides with equation (46) of \cite{CrSL1995}.
}
\end{remark}
\paragraph{Example: $G = SO(3)$ and $Q = S^2$.} Let $G = SO(3)$ and $Q = S^2 \subset \mathbb{R}^3$ and let $SO(3)$ act on $S^2$ through its action on vectors in $\mathbb{R}^3$. We start with our notational conventions.
\begin{remark}{\rm [Conventions for $SO(3)$ and $S^2$]\label{Remark_Conventions}\newline
Throughout this paper we use vector notation for the Lie algebra $\mathfrak{so}(3)$ of the Lie group of rotations $SO(3)$, as well as for its dual $\mathfrak{so}(3)^*$. One identifies 
$\mathfrak{so}(3)$ with $\mathbb{R}^3$ via the familiar 
isomorphism
\begin{equation}\label{hat_map}
\,\widehat{\,}:  \mathbb{R}^3\rightarrow \mathfrak{so}(3),\quad\mathbf{\Omega}=\left(
\begin{array}{c}
a\\
b\\
c\\
\end{array}\right) \mapsto \Omega:=\widehat{\mathbf{\Omega}}=\left(
\begin{array}{ccc}
0&-a&b\\
a&0&-c\\
-b&c&0
\end{array}
\right),
\end{equation}
called the \emph{hat map}.
This is a Lie algebra isomorphism when the vector cross product $\times$ is used as the Lie bracket operation on $\mathbb{R}^3$. The identification of $\mathfrak{so}(3)$ with $\mathbb{R}^3$ induces an isomorphism of the dual spaces $\mathfrak{so}(3)^* \cong \left(\mathbb{R}^3\right)^* \cong \mathbb{R}^3$. We represent tangent and cotangent spaces of $S^2$ as 
\begin{align}
T_{\mathbf{x}}S^2 = \left\{(\mathbf{x}, \mathbf{v})\in 
S^2\times\mathbb{R}^3 \mid \mathbf{x}\cdot \mathbf{v}=0\right\}, 
\quad  T_{\mathbf{x}}^*S^2=\left\{(\mathbf{x}, \mathbf{p}) \in 
S^2\times \mathbb{R}^3 \mid \mathbf{x}\cdot \mathbf{p} = 0
\right\}
\end{align}
with duality pairing $\left<\left(\mathbf{x}, \mathbf{p}\right), 
\left(\mathbf{x}, \mathbf{v}\right)\right>_{T^*S^2\times TS^2} 
= \mathbf{p} \cdot \mathbf{v}$.
Whenever admissible, we will drop the explicit mention of the 
base point $\mathbf{x}$ in what follows.}
\end{remark}
The infinitesimal generator is given by 
$(\mathbf{\Omega})_{S^2}(\mathbf{x}) 
= \mathbf{\Omega} \times \mathbf{x}$. We consider the bi-invariant extension $\gamma_{SO(3)}$ to 
$SO(3)$ of the identity moment of inertia inner product 
$\left<\mathbf{\Omega},\mathbf{\Omega}\right>_{\mathfrak{so}(3)} 
= \mathbf{\Omega} \cdot \mathbf{\Omega} = \|\mathbf{\Omega}\|^2$ 
on $\mathfrak{so}(3)$. The corresponding normal metric on $S^2$ 
is the round metric. The vertical and horizontal spaces are, 
see \eqref{Vertical_subspace_Lie_algebra_definition},
\begin{align}
    \mathfrak{so}(3)^V_{\mathbf{x}}&=\left\{ \mathbf{\Omega} \in \mathfrak{so}(3) \big| \mathbf{\Omega}= \lambda \mathbf{x} \mbox{ for } \lambda \in \mathbb{R}\right\} \quad \mbox{and} \quad 
 \mathfrak{so}(3)^H_{\mathbf{x}}=\left\{\mathbf{\Omega} \in \mathfrak{so}(3) \big| \mathbf{\Omega}\cdot \mathbf{x} = 0 \right\}.
\end{align}
The map $\bar{\mathbf{J}}$ of \eqref{Momentum_map_horizontal_generator_prop} becomes $\bar{\mathbf{J}}(\mathbf{x}, \mathbf{v}) = \mathbf{x} \times \mathbf{v}$. Equation \eqref{Cubic_symmetric_spaces} is
\begin{equation}\label{Spline_equation_sphere}
   \mathbf{x} \times (\dddot{\bar{\mathbf{J}}} + 2 \ddot{\bar{\mathbf{J}}}\times \bar{\mathbf{J}}) = 0, \quad \mbox{with}\quad  \bar{\mathbf{J}} = \mathbf{x}\times \dot{\mathbf{x}}.
\end{equation}
 Equation \eqref{Spline_equation_sphere} appears in \cite{Krakowski2005}, where it is derived from the general Euler-Lagrange equation for cubics \eqref{ELeqns-T1}. The similarity of \eqref{Spline_equation_sphere} with the NHP equation \eqref{NHP_equation} on $SO(3)$, $\dddot{\bar{\mathbf{J}}} + \ddot{\bar{\mathbf{J}}} \times \bar{\mathbf{J}} = 0$, was remarked upon there. We will take advantage of this similarity in Section \ref{Sec-Horizontal_lifts} for the investigation of horizontal lifts of cubics.
\subsection{Curvature from cubics}\label{Sec-Curvature_from_cubics}
One can infer the Christoffel symbols for a given metric if one knows the geodesic equation. It is interesting to note that in a similar way one obtains expressions for the (sectional) curvature from the equation of Riemannian cubics. We illustrate this in the case of Lie groups and symmetric spaces.
\paragraph{Lie groups.} Consider a Lie group $G$ with metric $\gamma_G$ that we assume, for simplicity, to be bi-invariant. Analogous arguments apply in the non-bi-invariant case. Let $g(t)$ be a curve in $G$ and $\bar{\mathbf{J}} =TR_{g^{-1}} \dot{g}$ its right-invariant velocity. Recall that $\operatorname{ad}^\dagger = -\operatorname{ad}$ and from \eqref{Covariant_derivative_group_prop}, \eqref{Covariant_derivative_vf_prop} that
\begin{align}
\!\!\!\!
  D_t\dot{g} =\left( \partial _t \bar{\mathbf{J}}\right) _G(g), \quad D_t^2\dot{g} = \big(\ddot{\bar{\mathbf{J} }} - \frac{1}{2}[\bar{\mathbf{J} }, \dot{\bar{\mathbf{J} }}]\big)_G(g) =: (\nu)_G(g), \quad D_t^3\dot{g} = \big(\dot{\nu} - \frac{1}{2}[\bar{\mathbf{J} }, \nu]\big)_G(g), \nonumber
\end{align}
where we defined $\nu := \ddot{\bar{\mathbf{J} }} - \frac{1}{2}[\bar{\mathbf{J}}, \dot{\bar{\mathbf{J} }}]$. The general Euler-Lagrange equation for cubics \eqref{ELeqns-T1} becomes
\begin{equation}\label{Lie_group_spline_with_curvature}
\left(\dot{\nu} - \frac{1}{2}[\bar{\mathbf{J} },\nu]\right)_G(g) 
+ R\left(\dot{\bar{\mathbf{J}}}_G(g),\bar{\mathbf{J}}_G(g)
\right)\left(\bar{\mathbf{J} }_G(g)\right) = 0.
\end{equation}
On the other hand, if $g(t)$ is a spline, then the NHP equation \eqref{NHP_equation} is satisfied, and therefore $\dot{\nu} = \frac{1}{2}\left[\bar{\mathbf{J}}, \ddot{\bar{\mathbf{J}}}\right]$. Plugging this into \eqref{Lie_group_spline_with_curvature}, yields
\begin{equation}
\frac{1}{4}\left(\left[\bar{\mathbf{J}}, 
\left[\bar{\mathbf{J}}, \dot{\bar{\mathbf{J}}}\right]\right]\right)_G(g) +  R\left(\dot{\bar{\mathbf{J} }}_G(g), 
\bar{\mathbf{J} }_G(g)\right)\left(\bar{\mathbf{J} }_G(g)\right) = 0. \nonumber
\end{equation}
We conclude that for any $g \in G$ and $\xi, \eta \in \mathfrak{g}$,
\begin{equation}\label{Lie_group_curvature}
  R\big(\eta_G(g), \xi_G(g)\big)\xi_G(g) = -\frac{1}{4}\big([\xi, [\xi, \eta]]\big)_G(g).
\end{equation}
\paragraph{Symmetric spaces.} For symmetric spaces one derives in a similar fashion that for a cubic $q(t)$ with horizontal generator $\bar{\mathbf{J}} = \bar{\mathbf{J}}(\dot{q})$,
\begin{equation}
  D_t^3 \dot{q} = \left(\dddot{\bar{\mathbf{J}}} + \left[\ddot{\bar{\mathbf{J}}}, \bar{\mathbf{J}}\right]\right)_Q(q). \nonumber
\end{equation}
It follows from \eqref{ELeqns-T1} and \eqref{Cubic_symmetric_spaces} that
\begin{equation}
  R(D_t\dot{q}, \dot{q}) \dot{q} = \left(\left[\ddot{\bar{\mathbf{J}}}, \bar{\mathbf{J}}\right]\right)_Q(q) = - \left(\left[\bar{\mathbf{J}}, \left[\bar{\mathbf{J}}, \dot{\bar{\mathbf{J}}}\right]\right]\right)_Q(q). \nonumber
\end{equation}
We conclude that for any $q \in Q$ and $\eta, \xi \in \mathfrak{g}_q^H$,
\begin{equation}\label{Symmetric_space_curvature}
  R \left(\eta_Q(q), \xi_Q(q)\right)\xi_Q(q) = - \left([\xi, [\xi, \eta]]\right)_Q(q). \nonumber
\end{equation}
\section{Horizontal lifts}\label{Sec-Horizontal_lifts}
The horizontal lifting property of geodesics in the normal metrics context has been an important feature of the large deformation matching framework in computational anatomy. References \cite{HoRaTrYo2004, MiTrYo2006, Younes2009S40}, amongst others, explore this aspect in detail. This motivates us to ask which Riemannian cubics on the manifold $Q$ lift to horizontal cubics on the Lie group $G$. In the case of symmetric spaces we give a complete characterization of the cubics that can be lifted horizontally. We then consider the more general setting of Riemannian submersions and formulate necessary and sufficient conditions under which horizontal lifts are possible.
\subsection{Symmetric spaces}\label{Subsec-Horizontal_lifts_symmetric_spaces}
Here we completely characterize the Riemannian cubics that can be lifted horizontally to the group of isometries. Let $(G, Q, \sigma)$ be a \emph{symmetric space structure}, as defined in Section \ref{Sec-Splines_on_symmetric_spaces}. In particular we recall the important relations \eqref{Symmetric_space_inclusions_dash},
\begin{equation}\label{Symmetric_space_inclusions}
  \left[\mathfrak{g}_q^V, \mathfrak{g}_q^V\right] \subset \mathfrak{g}_q^V, \quad  \left[\mathfrak{g}_q^H, \mathfrak{g}_q^V\right] \subset \mathfrak{g}_q^H, \quad  \left[\mathfrak{g}_q^H, \mathfrak{g}_q^H\right] \subset \mathfrak{g}_q^V.
\end{equation}
\begin{theorem}\label{Lifting_symmetric_spaces}
 A curve $q(t) \in Q$ is a Riemannian cubic and can be lifted horizontally to a Riemannian cubic $g(t) \in  G$ if and only if it satisfies $\dot{q}(t) = (\xi(t))_Q(q(t))$ for a curve $\xi(t) \in \mathfrak{g}$ of the form
 \begin{equation}\label{Quadratic_polynomial_symmetric_spaces}
   \xi(t) = \frac{ut^2}{2} + vt + w,
 \end{equation}
where $u, v, w$ span an Abelian subalgebra that lies in $\mathfrak{g}_{q(0)}^H$.
\end{theorem}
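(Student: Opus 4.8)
The plan is to express both cubic conditions through the common horizontal generator $\bar{\mathbf{J}} = \bar{\mathbf{J}}(\dot{q})$ of \eqref{Momentum_map_horizontal_generator_prop} and then read off the algebraic structure of its coefficients. The starting observation is that if $g(t)$ is the horizontal lift of $q(t)$, then its right-invariant velocity $\dot{g}g^{-1}$ coincides with $\bar{\mathbf{J}}$. Hence ``$q$ is a cubic'' is precisely \eqref{Cubic_symmetric_spaces_augmented}, namely $\dddot{\bar{\mathbf{J}}} + 2[\ddot{\bar{\mathbf{J}}}, \bar{\mathbf{J}}] = 0$, whereas ``the horizontal lift is a cubic'' is, by bi-invariance of $\gamma_G$, the NHP equation \eqref{NHP_equation}, $\dddot{\bar{\mathbf{J}}} + [\ddot{\bar{\mathbf{J}}}, \bar{\mathbf{J}}] = 0$, both written with the \emph{same} curve $\bar{\mathbf{J}}$. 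Subtracting, I note that the two conditions hold simultaneously if and only if $\dddot{\bar{\mathbf{J}}} = 0$ and $[\ddot{\bar{\mathbf{J}}}, \bar{\mathbf{J}}] = 0$.

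For the forward implication I would write $\bar{\mathbf{J}}(t) = \tfrac{1}{2}ut^2 + vt + w$ with $u = \ddot{\bar{\mathbf{J}}}$, $v = \dot{\bar{\mathbf{J}}}(0)$, $w = \bar{\mathbf{J}}(0)$ and set $\xi := \bar{\mathbf{J}}$, so that $\dot{q} = (\xi)_Q(q)$ with $\xi$ of the required quadratic form. Expanding $[\ddot{\bar{\mathbf{J}}}, \bar{\mathbf{J}}] = [u, \tfrac{1}{2} u t^2 + vt + w] = 0$ forces $[u,v] = [u,w] = 0$, and a short computation then shows that $[\dot{\bar{\mathbf{J}}}, \bar{\mathbf{J}}] = [v,w] =: z$ is constant in $t$. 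From the horizontality relations \eqref{Horizontality_relations_1} and the inclusions \eqref{Symmetric_space_inclusions} I would record that $w$ and $v$ are horizontal at $q(0)$, that $z = [\dot{\bar{\mathbf{J}}}, \bar{\mathbf{J}}]$ is vertical at every $q(t)$, and that $u + z = \ddot{\bar{\mathbf{J}}} + [\dot{\bar{\mathbf{J}}}, \bar{\mathbf{J}}]$ is horizontal at every $q(t)$.

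The crux is to show that $z = 0$, i.e.\ that the coefficients actually commute; this is where I expect the real work to lie, and it is exactly the point at which bi-invariance is indispensable. Using only linearity of the projection $\operatorname{V}_{q(t)}$ together with the horizontality of $\bar{\mathbf{J}}$, of $\dot{\bar{\mathbf{J}}} = ut + v$, and of $u + z$, I would peel off the vertical parts of the \emph{fixed} vectors $u, v, w$ at the moving point $q(t)$ and obtain $\operatorname{V}_{q(t)}(u) = -z$, then $\operatorname{V}_{q(t)}(v) = tz$, and finally $\operatorname{V}_{q(t)}(w) = -\tfrac{1}{2}t^2 z$. Pairing the last identity with $z$, which is vertical at $q(t)$ and hence orthogonal to the horizontal part of $w$, yields $\langle w, z\rangle = -\tfrac{1}{2}t^2 \|z\|^2$. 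The left-hand side is independent of $t$ while the right-hand side is a nonconstant multiple of $\|z\|^2$, so $\|z\| = 0$ and $z = [v,w] = 0$. Once $z = 0$, the relation $\operatorname{V}_{q(t)}(u) = -z = 0$ shows $u \in \mathfrak{g}_{q(0)}^H$, and then $u, v, w$ pairwise commute and lie in $\mathfrak{g}_{q(0)}^H$, as claimed.

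For the converse I would argue that, since $u, v, w$ span an abelian subalgebra $\mathfrak{a} \subset \mathfrak{g}_{q(0)}^H$, all the $\xi(t)$ commute, so the time-ordered flow of $\dot{g} = (\xi)_G(g)$ is the ordinary exponential $g(t) = \exp(\zeta(t)) g_0$ with $\zeta(t) = \int_0^t \xi(s)\,ds \in \mathfrak{a}$ and $g_0 \in \Pi^{-1}(q(0))$. Then $q(t) = \exp(\zeta(t))q(0)$ and $\dot{g}g^{-1} = \xi$, and because $\operatorname{Ad}_{\exp\zeta(t)}$ fixes $\xi(t) \in \mathfrak{a}$ while $\xi(t) \in \mathfrak{g}_{q(0)}^H$, one gets $\xi(t) \in \operatorname{Ad}_{\exp\zeta(t)}\mathfrak{g}_{q(0)}^H = \mathfrak{g}_{q(t)}^H$; hence $\xi = \bar{\mathbf{J}}$ and $g$ is the horizontal lift. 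Finally $\dddot{\bar{\mathbf{J}}} = 0$ and $[\ddot{\bar{\mathbf{J}}}, \bar{\mathbf{J}}] = [u, \xi] = 0$ make both \eqref{Cubic_symmetric_spaces_augmented} and \eqref{NHP_equation} hold, so $q$ is a cubic and its horizontal lift is a cubic, completing the equivalence.
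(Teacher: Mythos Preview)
Your argument is correct, and the converse direction matches the paper's essentially verbatim. The forward direction, however, is organized differently from the paper's, and the key step---showing that $[v,w]=0$---is handled by a genuinely different mechanism.

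In the paper, the authors do not first write $\bar{\mathbf{J}}$ as a quadratic polynomial and then chase the remaining commutator. Instead, immediately after obtaining $[\ddot{\bar{\mathbf{J}}},\bar{\mathbf{J}}]=0$, they use bi-invariance (i.e.\ $\operatorname{ad}^\dagger=-\operatorname{ad}$) together with \eqref{Horizontality_relations_1} and \eqref{Symmetric_space_inclusions} to compute
\[
0=\gamma\big([\ddot{\bar{\mathbf{J}}},\bar{\mathbf{J}}],\dot{\bar{\mathbf{J}}}\big)=\gamma\big(\ddot{\bar{\mathbf{J}}},[\bar{\mathbf{J}},\dot{\bar{\mathbf{J}}}]\big)=\big\|[\bar{\mathbf{J}},\dot{\bar{\mathbf{J}}}]\big\|^2,
\]
the last equality because $\ddot{\bar{\mathbf{J}}}+[\dot{\bar{\mathbf{J}}},\bar{\mathbf{J}}]$ is horizontal while $[\bar{\mathbf{J}},\dot{\bar{\mathbf{J}}}]$ is vertical. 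This kills $[\bar{\mathbf{J}},\dot{\bar{\mathbf{J}}}]$ in one line, and only afterward do they read off the quadratic form and the commutation of $u,v,w$.

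Your route instead first extracts the polynomial, obtains $[u,v]=[u,w]=0$ from $[\ddot{\bar{\mathbf{J}}},\bar{\mathbf{J}}]=0$, and then eliminates $z=[v,w]$ by tracking the vertical projections $\operatorname{V}_{q(t)}$ of the fixed vectors $u,v,w$ along the curve and exploiting the $t$-independence of $\langle w,z\rangle$. This is a nice moving-frame argument and is entirely valid; it trades the single $\operatorname{ad}$-invariance identity for a short cascade of linear relations among vertical parts. The paper's approach is shorter and makes the role of bi-invariance more transparent at the decisive step; yours is more elementary in the sense that once the polynomial form is in hand, it uses only linearity of the projections and orthogonality of the horizontal/vertical splitting.
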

\begin{proof}
Suppose $q(t)$ is a cubic and can be lifted horizontally to a cubic $g(t)$. The horizontal generator curve $\bar{\mathbf{J}}(\dot{q}(t))$ simultaneously satisfies equations \eqref{NHP_equation} and \eqref{Cubic_symmetric_spaces_augmented}. Therefore,  $[\ddot{\bar{\mathbf{J}}}, \bar{\mathbf{J}}]= 0$. In particular
\begin{equation}
\gamma\left(\left[\ddot{\bar{\mathbf{J}}}, \bar{\mathbf{J}}\right], \dot{\bar{\mathbf{J}}}\right) =  \gamma\left(\ddot{\bar{\mathbf{J}}}, \left[\bar{\mathbf{J}}, \dot{\bar{\mathbf{J}}}\right]\right)  = \left\|\left[\bar{\mathbf{J}}, \dot{\bar{\mathbf{J}}}\right]\right\|^2_{\mathfrak{g}} = 0,\nonumber
  \end{equation}
where we used \eqref{Horizontality_relations_1} and \eqref{Symmetric_space_inclusions}. We conclude that  $[\bar{\mathbf{J}}, \dot{\bar{\mathbf{J}}}]= 0$.  Together with the NHP equation \eqref{NHP_equation_integrated_once} this reveals that $\ddot{\bar{\mathbf{J}}}$ is constant. Therefore,
  \begin{equation}\nonumber
    \bar{\mathbf{J}}(\dot{q}(t)) = \frac{ut^2}{2} +v t + w
  \end{equation}
with constants $u, v, w \in \mathfrak{g}$. These are mutually commuting because of $[\bar{\mathbf{J}}, \dot{\bar{\mathbf{J}}}]= 0$ and its time derivative $[\bar{\mathbf{J}}, \ddot{\bar{\mathbf{J}}}]= 0$. Moreover, $\mathbf{J}$ and $\dot{\mathbf{J}}$ are horizontal. Due to  $[\bar{\mathbf{J}}, \dot{\bar{\mathbf{J}}}]= 0$,  $\ddot{\mathbf{J}}$ is also horizontal. Therefore, $u, v, w \in \mathfrak{g}_{q(0)}^H$. Setting $\xi(t) = \bar{\mathbf{J}}(\dot{q}(t))$ therefore completes the first part of the proof. For the reverse, let $q(t)$ be a curve that satisfies  $\dot{q}(t) = (\xi(t))_Q(q(t))$ for $\xi(t) \in \mathfrak{g}$ of the form \eqref{Quadratic_polynomial_symmetric_spaces} with mutually commuting $u, v, w \in \mathfrak{g}_{q(0)}^H$. Fix an element $a \in Q$ and recall the Riemannian submersion $\Pi: G \rightarrow Q$ given by $g \mapsto ga$. Assume without loss of generality that $q(0) = a$. We first show that $\xi(t)$ is horizontal at all times. Then $\dot{\xi}(t)$ and $\ddot{\xi}(t)$ are also horizontal, since the commutators $\left[\xi, \dot{\xi}\right]$ and $ \left[\xi, \ddot{\xi}\right]$ vanish. Define the curve $g(t) \in G$ by $g(0) = e$ and $\dot{g} = TR_g \xi$. This curve lies in the Abelian subgroup $\operatorname{Exp}(\operatorname{span}(u, v, w))$ of $G$ with Lie algebra $\operatorname{span}(u, v, w) \subset \mathfrak{g}_{q(0)}^H$. Therefore, $ TL_{g^{-1}} \dot{g} = \operatorname{Ad}_{g^{-1}}\xi \in \operatorname{span}(u, v, w)$ which means that $\xi \in \operatorname{Ad}_{g}\operatorname{span}(u, v, w) \subset \operatorname{Ad}_g \mathfrak{g}_{q(0)}^H = \mathfrak{g}_{q(t)}^H$. Hence $\xi(t)$ is the horizontal generator of $q(t)$, and therefore $g(t)$ is a horizontal lift of $q(t)$. Moreover, $g(t)$ is a cubic in $G$ since $\xi(t)$ satisfies the NHP equation \eqref{NHP_equation}. It also satisfies \eqref{Cubic_symmetric_spaces}. Therefore, $q(t)$ is a cubic in $Q$. This concludes the proof.
\end{proof}
A \emph{Cartan subalgebra (CSA) based at} $q \in Q$ is a maximal Abelian Lie subalgebra contained in $\mathfrak{g}_q^H$. The \emph{rank} of the symmetric space is the dimension of its CSAs. The greater the rank of a symmetric space, the larger the set of vectors $u, v, w$ consistent with the requirements of Theorem \ref{Lifting_symmetric_spaces}.
\begin{corollary}\label{Corollary_rank_one}
  In rank-one symmetric spaces the only Riemannian cubics that can be lifted horizontally to Riemannian cubics on the group of isometries are geodesics composed with a cubic polynomial in time.
\end{corollary}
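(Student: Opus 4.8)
The plan is to specialize Theorem~\ref{Lifting_symmetric_spaces} to the rank-one case and then recognize the surviving curves as reparametrized geodesics. By that theorem, a cubic that lifts horizontally to a cubic is precisely a curve $q(t)$ with $\dot q = (\xi)_Q(q)$ and $\xi(t) = \frac{u t^2}{2} + vt + w$, where $u,v,w$ span an Abelian subalgebra $\mathfrak{a}\subset\mathfrak{g}^H_{q(0)}$. First I would invoke the rank-one hypothesis: any Abelian subalgebra of $\mathfrak{g}^H_{q(0)}$ is contained in a maximal one, that is, in a CSA, and in rank one every CSA is one-dimensional. Hence $\dim\mathfrak{a}\le 1$, so there is a single $e\in\mathfrak{g}^H_{q(0)}$ and a scalar quadratic polynomial $p(t)$ with $\xi(t)=p(t)\,e$.

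Next I would reconstruct the curve explicitly. Assuming without loss of generality $q(0)=a$, the fact that $\xi(t)=p(t)e$ is at all times proportional to the fixed generator $e$ forces the horizontal lift $g(t)$ determined by $\dot g = TR_g\xi$ to stay inside the one-parameter subgroup generated by $e$. Concretely $g(t)=\exp(\psi(t)e)$ with $\psi(t)=\int_0^t p(\tau)\,d\tau$, a cubic polynomial, which I would verify by differentiating $\exp(\psi(t)e)$ and using $\frac{d}{ds}\exp(se)=TR_{\exp(se)}e$. Projecting through $\Pi$ then yields $q(t)=\exp(\psi(t)e)\,a=\gamma(\psi(t))$, where $\gamma(s):=\exp(se)\,a$.

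It remains to argue that $\gamma$ is a geodesic of $Q$, so that $q$ is a geodesic composed with the cubic polynomial $\psi$. For the bi-invariant metric on $G$ the one-parameter subgroup $s\mapsto\exp(se)$ is a geodesic, and it is horizontal at every $s$: since $\exp(se)$ commutes with $e$ we have $\operatorname{Ad}_{\exp(se)}e=e$, whence by the conjugation description of horizontal spaces in \eqref{Ad_relations_projections_dash} one gets $e\in\operatorname{Ad}_{\exp(se)}\mathfrak{g}^H_a=\mathfrak{g}^H_{\gamma(s)}$. By the horizontal lifting property of geodesics for the Riemannian submersion $\Pi$, which follows from \eqref{Covariant_derivative_on_RS}, its projection $\gamma$ is a geodesic on $Q$. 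This establishes that every horizontally liftable cubic in a rank-one symmetric space is a geodesic reparametrized by a cubic polynomial. For the converse I would run the argument backwards: a geodesic through $a$ takes the form $\gamma(s)=\exp(se)a$ with $e\in\mathfrak{g}^H_a$, and composing with a cubic polynomial $\psi$ gives a curve whose horizontal generator is the quadratic $\psi'(t)\,e$, with $\mathfrak{a}=\mathbb{R}e$ Abelian and horizontal; by Theorem~\ref{Lifting_symmetric_spaces} it is therefore a liftable cubic.

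The main obstacle I anticipate is the step that the one-parameter subgroup remains horizontal for all $s$, not merely at $s=0$; this is what guarantees that $\gamma$ is the horizontal projection of a geodesic, and hence itself a geodesic, rather than some other curve. The identity $\operatorname{Ad}_{\exp(se)}e=e$ together with the conjugation rule \eqref{Ad_relations_projections_dash} is exactly what resolves it. A secondary point to treat carefully is the reduction to one dimension, which rests on the definition of rank through the common dimension of the CSAs and on the embedding of any Abelian subalgebra of $\mathfrak{g}^H_{q(0)}$ into a CSA.
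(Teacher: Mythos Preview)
Your proposal is correct and follows essentially the same approach as the paper: specialize Theorem~\ref{Lifting_symmetric_spaces} so that rank one forces $\xi(t)=p(t)\,e$ for a scalar quadratic $p$ and a fixed $e\in\mathfrak{g}^H_{q(0)}$, then integrate to $q(t)=\exp(\psi(t)e)\,q(0)$ with $\psi$ a cubic polynomial. You supply more justification than the paper does---in particular the argument via \eqref{Ad_relations_projections_dash} that $s\mapsto\exp(se)$ stays horizontal so that $\gamma(s)=\exp(se)\,a$ is indeed a geodesic, and the converse direction---whereas the paper simply asserts that $y(t)=e^{td}q(0)$ is a geodesic and leaves the rest implicit.
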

\begin{proof}
  Let $Q$ be a rank-one symmetric space. Then any CSA is one-dimensional. A curve $q(t)$ is therefore a cubic that can be lifted horizontally if and only if  $\dot{q}(t) = (\xi(t))_Q(q(t))$ with $\xi(t) \in \mathfrak{g}$ of the form
 \begin{equation}
   \xi(t) = \left(\frac{at^2}{2} + bt + c\right) d,
 \end{equation}
where $d \in \mathfrak{g}_{q(0)}^H$ and $a, b, c \in \mathbb{R}$. Therefore,
\[
  q(t) = e^{\left(\frac{at^3}{6} + \frac{bt^2}{2} + ct\right) d}q(0),
\]
which corresponds to the geodesic $y(t) = e^{td}q(0)$, composed with a cubic polynomial in time.
\end{proof}
\begin{remark}{\rm
Consider $G = SO(3)$ and the rank-one symmetric space $Q = S^2$.  The special cubics appearing in Corollary \ref{Corollary_rank_one} are considered in more detail in \cite{PaRa1997}. In particular, the case $b = c = 0$ corresponds to the so-called natural splines in computer aided design (CAD) applications.
}
\end{remark}
\subsection{Riemannian submersions} \label{Sec_Riemannian_submersion_viewpoint}
In this section we generalize the question of horizontal lifts of cubics to the Riemannian submersion setting. We also show how the result implies Theorem \ref{Lifting_symmetric_spaces} of the previous section. The result is the following: 
\begin{theorem}\label{RS_Lifting_theorem_statement}
    Let $\Pi: \tilde{Q} \rightarrow Q$ be a Riemannian submersion, and let $q(t) \in  Q$ be a Riemannian cubic. Moreover, let $\tilde{q}(t) \in \tilde{Q}$ be a horizontal lift of $q(t)$. The curve $\tilde{q}(t)$ is a Riemannian cubic if and only if
\begin{align}\label{RS_Lifting_proposition}
    \left[\dot{\tilde{q}}, \frac{D}{Dt}\dot{\tilde{q}}\right]^V = 0,\quad \text{ for all } t,
\end{align}
where the superscript $V$ denotes the vertical part.
  \end{theorem}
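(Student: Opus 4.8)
The plan is to form the ``cubic defect'' of $\tilde q$, namely $P:=\frac{D^3}{Dt^3}\dot{\tilde q}+\tilde R\big(\frac{D}{Dt}\dot{\tilde q},\dot{\tilde q}\big)\dot{\tilde q}$, whose vanishing is exactly the cubic equation \eqref{ELeqns-T1} on $\tilde Q$, and to split $P$ into its horizontal part $P^H$ and vertical part $P^V$. I write $X:=\dot{\tilde q}$, let $Y$ denote the horizontal lift of $\frac{D}{Dt}\dot q$, and abbreviate $A_XY:=\frac12[X,Y]^V$, so that \eqref{Covariant_derivative_on_RS} reads $\tilde\nabla_{\tilde X}\tilde Y=\widetilde{\nabla_XY}+A_{\tilde X}\tilde Y$ on basic horizontal fields; here $A$ is tensorial and skew, $\langle A_XY,U\rangle=-\langle Y,A_XU\rangle$ for $Y$ horizontal and $U$ vertical. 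First I would record the two lowest covariant derivatives of $\dot{\tilde q}$. Since $[\dot{\tilde q},\dot{\tilde q}]^V=0$, formula \eqref{Covariant_derivative_on_RS} gives $\frac{D}{Dt}\dot{\tilde q}=\widetilde{\frac{D}{Dt}\dot q}=Y$, which is horizontal (this is the computation used for lifting geodesics). A second application yields $\frac{D^2}{Dt^2}\dot{\tilde q}=Z+W$, where $Z:=\widetilde{\frac{D^2}{Dt^2}\dot q}$ is horizontal and $W:=A_{\dot{\tilde q}}Y=\frac12[\dot{\tilde q},\frac{D}{Dt}\dot{\tilde q}]^V$ is vertical. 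Thus the hypothesis \eqref{RS_Lifting_proposition} says precisely that $W\equiv0$, i.e.\ that $\frac{D^2}{Dt^2}\dot{\tilde q}$ is horizontal.

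Differentiating a third time and projecting, one application of \eqref{Covariant_derivative_on_RS} puts a term $A_{\dot{\tilde q}}W$ into $\big(\frac{D^3}{Dt^3}\dot{\tilde q}\big)^H$, while O'Neill's curvature identity for four horizontal arguments contributes $3A_{\dot{\tilde q}}W$ to the horizontal part of the curvature term; the remaining horizontal pieces assemble into the horizontal lift of $\frac{D^3}{Dt^3}\dot q+R(\frac{D}{Dt}\dot q,\dot q)\dot q$, which vanishes because $q$ is a cubic. Hence $P^H=4\,A_{\dot{\tilde q}}W$ (the precise constant is immaterial). This already settles the forward implication: if $\tilde q$ is a cubic then $A_{\dot{\tilde q}}W=0$, and pairing with $Y=\frac{D}{Dt}\dot{\tilde q}$ and using skew-symmetry gives $\langle A_{\dot{\tilde q}}W,Y\rangle=-\langle W,A_{\dot{\tilde q}}Y\rangle=-\|W\|^2$, so that $W\equiv0$, which is \eqref{RS_Lifting_proposition}.

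For the converse I would assume $W\equiv0$ and show $P=0$. Then $P^H=4A_{\dot{\tilde q}}W=0$ at once, and since $W\equiv0$ also kills $(\tilde\nabla_{\dot{\tilde q}}W)^V$, everything reduces to $P^V=A_{\dot{\tilde q}}Z+\big(\tilde R(Y,X)X\big)^V$. The hard part is the vertical curvature term, which I would compute from $\tilde R(Y,X)X=\tilde\nabla_Y\tilde\nabla_XX-\tilde\nabla_X\tilde\nabla_YX-\tilde\nabla_{[Y,X]}X$ by repeated use of \eqref{Covariant_derivative_on_RS} and the O'Neill tensors $A$ and $T$. The decisive point is that the hypothesis forces $A_YX=-W=0$ along the curve, so that the vertical derivative term $\mathcal V\tilde\nabla_{\dot{\tilde q}}(A_YX)$ and the second-fundamental-form term $T_{A_YX}X$ both drop out; the cross-terms built from $\widetilde{\nabla_YX}$ cancel against the vertical part of $\tilde\nabla_{[Y,X]}X$ by skew-symmetry of $A$; and, using that along the curve $\widetilde{\nabla_{\dot q}\dot q}=Y$ and $\widetilde{\nabla_{\dot q}(\frac{D}{Dt}\dot q)}=Z$, the survivor is $A_YY-A_{\dot{\tilde q}}Z=-A_{\dot{\tilde q}}Z$. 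Therefore $P^V=A_{\dot{\tilde q}}Z-A_{\dot{\tilde q}}Z=0$, so $P=0$ and $\tilde q$ is a cubic.

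Two remarks guide the write-up. All intermediate quantities involve extensions of $X$ and $Y$ to basic fields, and the cancellation of the $\widetilde{\nabla_YX}$ terms is exactly what certifies that the final expression depends only on the values of $\dot{\tilde q}$ and $\frac{D}{Dt}\dot{\tilde q}$ along the curve, as it must since $\tilde R$ is tensorial. Moreover, the $T$-dependent terms are switched off precisely by the hypothesis $W=0$, so no assumption that the fibres be totally geodesic is needed and the statement holds for an arbitrary Riemannian submersion. The only genuinely delicate computation is the vertical curvature term of the previous paragraph; the rest is routine bookkeeping of horizontal and vertical projections.
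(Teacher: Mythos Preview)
Your strategy---form the cubic defect $P$ on $\tilde Q$, split it horizontally/vertically, and use $q$'s cubic equation---is exactly the paper's approach, and your forward direction (obtaining $P^H=4A_XW$ and pairing with $Y$ to force $\|W\|^2=0$) is equivalent to the paper's computation, which pairs the full $P$ with $\tilde\nabla_{\tilde v}\tilde v$ and reaches the same conclusion via O'Neill's sectional curvature formula.

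The one place where you diverge is the converse computation of $(\tilde R(Y,X)X)^V$. You attempt it directly from $\tilde R(Y,X)X=\tilde\nabla_Y\tilde\nabla_XX-\tilde\nabla_X\tilde\nabla_YX-\tilde\nabla_{[Y,X]}X$ with basic extensions, and your sketch (``cross-terms built from $\widetilde{\nabla_YX}$ cancel against the vertical part of $\tilde\nabla_{[Y,X]}X$'', ``the survivor is $A_YY-A_XZ$'') is too loose to be convincing as written: the extensions of $X,Y$ to basic fields do not satisfy $\tilde\nabla_XX=Y$ or $\tilde\nabla_XY=Z$ off the curve, so several of the terms you suppress are not obviously zero and the cancellations need to be justified pointwise. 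The paper sidesteps this entirely by invoking O'Neill's tensorial identities for $\langle\tilde R(X,Y)Z,H\rangle$ (four horizontal arguments) and $\langle\tilde R(X,Y)Z,W\rangle$ (three horizontal, one vertical). With the hypothesis $A_XY=0$ these reduce the horizontal check to the base cubic equation, and for the vertical part the paper differentiates $A_{\tilde v}(\tilde\nabla_{\tilde v}\tilde v)=0$ along the curve to obtain the tensorial relation $(\tilde\nabla_{\tilde v}A)_{\tilde v}(\tilde\nabla_{\tilde v}\tilde v)=-A_{\tilde v}((\tilde\nabla_{\tilde v})^2\tilde v)=-A_XZ$, which feeds directly into O'Neill's formula to give $(\tilde R(Y,X)X)^V=-A_XZ$. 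This is exactly your ``survivor'', but obtained cleanly without any extension bookkeeping. I would recommend replacing your direct curvature computation with this argument.
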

\begin{remark}{\rm
For any two horizontal vectors $\tilde{v}, \tilde{w} \in T^H_{\tilde{q}}\tilde{Q}$, the expression $[\tilde{v}, \tilde{w}]^V$ is defined as $[\tilde{v}, \tilde{w}]^V := [\tilde{V}, \tilde{W}]^V(\tilde{q})$, for horizontal extensions $\tilde{V}, \tilde{W}$ of $\tilde{v}, \tilde{w}$.
}
\end{remark}
\begin{proof}
We denote $v(t) := \dot{q}(t)$ and $\tilde{v}(t) := 
\dot{\tilde{q}}(t)$. The metrics on $Q$ and $\tilde{Q}$ are denoted $\tilde{\gamma}$ and $\gamma$. The covariant derivatives with respect to the Levi-Civita connections are written 
$\nabla$ and $\tilde{\nabla}$ respectively. Recall that by definition $\frac{D}{Dt} v = \nabla_{v}v$ and $\frac{D}{Dt}\tilde{v} = \tilde{\nabla}_{\tilde{v}}\tilde{v}$. Using the formula $\tilde{\nabla}_{\tilde{X}}\tilde{Y} = \widetilde{\nabla_XY} + \frac{1}{2}[\tilde{X}, \tilde{Y}]^V$ for the covariant derivative induced by a Riemannian submersion of horizontally lifted vector fields $\tilde{X}$ and $\tilde{Y}$, one obtains
\begin{align}
  &\tilde{\nabla}_{\tilde{v}}\tilde{v} = \widetilde{\nabla_v v}, \quad (\tilde{\nabla}_{\tilde{v}})^2 \tilde{v} = \widetilde{(\nabla_{v})^2 v} + \frac{1}{2} [\tilde{v},\widetilde{\nabla_{v} v}]^V, \label{Intermediate_2}
\\
& (\tilde{\nabla}_{\tilde{v}})^3 \tilde{v} = \widetilde{(\nabla_{v})^3 v} + 
\frac{1}{2} [\tilde{v},\widetilde{(\nabla_{v}^2) v}]^V + 
\frac{1}{2} \tilde{\nabla}_{\tilde{v}}  ([\tilde{v},\widetilde{\nabla_{v} v}]^V).\label{Intermediate_3}
\end{align}
Suppose $q(t)$ and $\tilde{q}(t)$ are as in the statement of the 
theorem and let $\tilde{q}(t)$ be a Riemannian cubic spline, 
i.e., the respective spline equations are satisfied,
\begin{align}
  \begin{cases}
    (\nabla_{v})^3 v + R(\nabla_v v,v) v = 0,\\ \label{System_of_Splines}
    (\tilde{\nabla}_{\tilde{v}})^3\tilde{v} + \tilde{R}(\tilde{\nabla}_{\tilde{v}} \tilde{v}, \tilde{v}) \tilde{v} = 0.
  \end{cases}
\end{align}
For the following manipulations we record that
\begin{align}\label{Partial_Integration}
& \tilde{\gamma}\left( \tilde{\nabla}_{\tilde{v}}  ([\tilde{v},\widetilde{\nabla_{v} v}]^V) ,  \tilde{\nabla}_{\tilde{v}}\tilde{v} \right)  =\frac{d}{dt}\tilde{\gamma}\left( [\tilde{v},\widetilde{\nabla_{v} v}]^V ,  \tilde{\nabla}_{\tilde{v}}\tilde{v}\right) -\tilde{\gamma}\left(  [\tilde{v},\widetilde{\nabla_{v} v}]^V,\tilde{\nabla}^2_{\tilde{v}} \tilde{v} \right)=  -\tilde{\gamma}\left( [\tilde{v},\widetilde{\nabla_{v} v}]^V,\tilde{\nabla}^2_{\tilde{v}} \tilde{v} \right), \nonumber
\end{align}
where the second step follows since $ \tilde{\nabla}_{\tilde{v}}\tilde{v}$ is horizontal. We use this equality as well as equations \eqref{Intermediate_2} - \eqref{Intermediate_3} to obtain 
\begin{align*}
  \tilde{\gamma}\left( (\tilde{\nabla}_{\tilde{v}})^3\tilde{v}, \tilde{\nabla}_{\tilde{v}}\tilde{v}\right) &= \tilde{\gamma}\left(\widetilde{(\nabla_{v})^3 v}, \widetilde{\nabla_v v} \right) + \frac{1}{2} \tilde{\gamma}\left( \tilde{\nabla}_{\tilde{v}}  ([\tilde{v},\widetilde{\nabla_{v} v}]^V),  \widetilde{\nabla_v v} \right)\\
&=  \tilde{\gamma}\left( \widetilde{(\nabla_{v})^3 v}, \widetilde{\nabla_v v} \right) - \frac{1}{2}  \tilde{\gamma}\left( [\tilde{v},\widetilde{\nabla_{v} v}]^V, (\tilde{\nabla}_{\tilde{v}}  )^2 \tilde{v}\right) \\
&=  \tilde{\gamma}\left( \widetilde{(\nabla_{v})^3 v}, \widetilde{\nabla_v v} \right) - \frac{1}{4} \left\| [\tilde{v},\widetilde{\nabla_{v} v}]^V\right\|^2.
\end{align*}
Hence, 
\begin{equation}\label{Pairing_Cov_Ders}
   \tilde{\gamma}\left( (\tilde{\nabla}_{\tilde{v}})^3\tilde{v}, \tilde{\nabla}_{\tilde{v}}\tilde{v}\right) =  \gamma\left((\nabla_{v})^3 v, \nabla_v v \right) - \frac{1}{4} \left\| [\tilde{v},\widetilde{\nabla_{v} v}]^V\right\|^2.
\end{equation}
On the other hand O'Neill's formula for sectional curvatures of Riemannian submersions \cite{ONeill1966} (Equation $3.$ in Corollary 1), implies that
\begin{align}\label{Pairing_Cov_Der_Curvature}
 \tilde{\gamma}\left( \tilde{R}(\tilde{\nabla}_{\tilde{v}} \tilde{v}, \tilde{v}) \tilde{v}, \tilde{\nabla}_{\tilde{v}}\tilde{v}\right) = \gamma\left( R(\nabla_v v, v) v, \nabla_v v\right) - \frac{3}{4} \left\|[\tilde{v}, \widetilde{\nabla_v v}]^V\right\|^2.
\end{align}
Adding \eqref{Pairing_Cov_Ders} and \eqref{Pairing_Cov_Der_Curvature} and using the spline equations \eqref{System_of_Splines} we conclude that
\begin{equation}\label{First_condition_in_proof}
   [\tilde{v}, \widetilde{\nabla_v v}]^V = [\tilde{v}, \tilde{\nabla}_{\tilde{v}} \tilde{v}]^V = 0\,,
\end{equation}
which is \eqref{RS_Lifting_proposition}. To show the reverse 
direction, we note that when \eqref{RS_Lifting_proposition} holds, then \eqref{Intermediate_2} and \eqref{Intermediate_3} take the simplified form \begin{align}
   (\tilde{\nabla}_{\tilde{v}})^2 \tilde{v} = \widetilde{(\nabla_v)^2v}, \quad (\tilde{\nabla}_{\tilde{v}})^3 \tilde{v} = \widetilde{(\nabla_{v})^3 v} + 
\frac{1}{2} [\tilde{v},(\tilde{\nabla}_{\tilde{v}})^2 \tilde{v}]^V.\label{Intermediate_3_updated}
\end{align}
Hence, $(\tilde{\nabla}_{\tilde{v}})^2 \tilde{v}$ is horizontal. Moreover, $ (\tilde{\nabla}_{\tilde{v}})^3 \tilde{v}$ splits naturally into horizontal and vertical parts. Therefore, checking that the second equation in  \eqref{System_of_Splines} holds, amounts to verifying that for any choice of horizontal vector field $\tilde{h}(t)$ and any choice of vertical vector field $\tilde{w}(t)$ along $\tilde{q}(t)$,
\begin{align}
  \begin{cases}\label{Splitting_equations_horizontal_vertical}
    \tilde{\gamma}\left(\tilde{R}(\tilde{\nabla}_{\tilde{v}}\tilde{v}, \tilde{v}) \tilde{v}, \tilde{h}\right) - \gamma\left( R(\nabla_v v, v)v, h\right) = 0\\
\tilde{\gamma}\left( \tilde{R}(\tilde{\nabla}_{\tilde{v}}\tilde{v}, \tilde{v}) \tilde{v}, \tilde{w}\right) +\frac{1}{2} \tilde{\gamma}\left( [\tilde{v}, (\tilde{\nabla}_{\tilde{v}})^2\tilde{v}]^V, \tilde{w}\right) = 0,
  \end{cases}
\end{align}
where we denoted $h := \Pi_*\tilde{h}$. To proceed, we introduce the $(1,2)$-tensors $A$ and $T$ defined, for arbitrary vector fields $E, F$, by
 \begin{align}
   A_EF = \left(\tilde{\nabla}_{E^H}(F^H)\right)^V + 
\left(\tilde{\nabla}_{E^H}(F^V)\right)^H \label{Definition_A}\\
 T_EF =\left(\tilde{\nabla}_{E^V}(F^V)\right)^H + 
 \left(\tilde{\nabla}_{E^V}(F^H)\right)^V. \label{Definition_T}
 \end{align}
 The superscripts $H$ and $V$ denote the horizontal and vertical parts, respectively. Definitions \eqref{Definition_A} and \eqref{Definition_T} coincide with the ones given in 
 \cite{ONeill1966}. It is shown there (in Equations \{3\} and \{4\}) that if $X, Y, Z, H$ are horizontal vector fields and $W$ is a vertical vector field, then
\begin{align}\label{ONeill_4}
  \tilde{\gamma}\left( \tilde{R}(X, Y) Z, H\right) =&\mbox{ } \gamma\left( R(\Pi_*X, \Pi_*Y) \Pi_*Z, \Pi_*H\right) + 2\tilde{\gamma} \left( A_XY, A_ZH\right) \nonumber \\&- \tilde{\gamma}\left( A_YZ, A_XH\right) - \tilde{\gamma}\left( A_ZX, A_YH\right).
\end{align}
and
 \begin{align} \label{ONeill_3}
   \tilde{\gamma}\left(\tilde{R}(X, Y) Z, W \right) = &- \tilde{\gamma}\left((\tilde{\nabla}_ZA)_XY, W\right) - \tilde{\gamma}\left( A_XY, T_WZ\right) \nonumber \\&+ \tilde{\gamma}\left( A_YZ, T_WX\right) + \tilde{\gamma}\left( A_ZX, T_WY\right).
 \end{align}
Note that we differ from \cite{ONeill1966} in our sign convention for the curvature tensor. It is also shown in \cite{ONeill1966} that for any two horizontal vector fields $X$ and $Y$ one has $ A_XY = \frac{1}{2} [X, Y]^V$. In particular, \eqref{First_condition_in_proof} can be written as $A_{\tilde{v}} (\tilde{\nabla}_{\tilde{v}}\tilde{v}) = 0$. This, together with \eqref{ONeill_4}, implies that
 \begin{equation}
   \tilde{\gamma}\left( \tilde{R}(\tilde{\nabla}_{\tilde{v}}\tilde{v}, \tilde{v})\tilde{v}, \tilde{h}\right) = \gamma\left( R(\nabla_vv, v) v, h\right), \nonumber
 \end{equation}
which is equivalent to the first equation in 
\eqref{Splitting_equations_horizontal_vertical}. In order to show the second equation we take a covariant derivative of \eqref{First_condition_in_proof} written in the form 
$A_{\tilde{v}} (\tilde{\nabla}_{\tilde{v}}\tilde{v}) = 0$ 
to obtain
\begin{equation}
   0 = \tilde{\nabla}_{\tilde{v}}(A_{\tilde{v}}(\tilde{\nabla}_{\tilde{v}}\tilde{v})) = (\tilde{\nabla}_{\tilde{v}}A)_{\tilde{v}}(\tilde{\nabla}_{\tilde{v}}\tilde{v}) + A_{\tilde{v}}((\tilde{\nabla}_{\tilde{v}})^2\tilde{v}). \nonumber
\end{equation}
It follows from this and \eqref{ONeill_3} that
\begin{align}
  \tilde{\gamma}\left( \tilde{R}(\tilde{\nabla}_{\tilde{v}}\tilde{v}, \tilde{v}) \tilde{v}, \tilde{w}\right) &= - \tilde{\gamma}\left( \tilde{R}(\tilde{v}, \tilde{\nabla}_{\tilde{v}}\tilde{v}) \tilde{v}, \tilde{w}\right)
= \tilde{\gamma}\left( (\tilde{\nabla}_{\tilde{v}}A)_{\tilde{v}} (\tilde{\nabla}_{\tilde{v}}\tilde{v}), \tilde{w}\right) \nonumber  \\
&= - \tilde{\gamma}\left( A_{\tilde{v}}((\tilde{\nabla}_{\tilde{v}})^2\tilde{v}), \tilde{w}\right)
= -\frac{1}{2}\tilde{\gamma}\left( [\tilde{v}, (\tilde{\nabla}_{\tilde{v}})^2\tilde{v}]^V, \tilde{w}\right). \nonumber
\end{align}
Therefore the second equation of \eqref{Splitting_equations_horizontal_vertical} is satisfied. This concludes the proof.
\end{proof}
\subsubsection{Example: Normal metrics in the bi-invariant case}
We now show how this result relates to Theorem \ref{Lifting_symmetric_spaces} of Section \ref{Subsec-Horizontal_lifts_symmetric_spaces}. Let $G$ be a Lie group with a bi-invariant metric $\gamma$ that acts transitively on a manifold $Q$ equipped with the normal metric $\gamma_Q$. Recall that for a fixed element $a \in Q$ the map $\Pi: G \rightarrow Q, g \mapsto ga$ is a Riemannian submersion.
\begin{lemma}\label{A_lemma}
   Let $g \in G$ with $\Pi(g) = q$ and let $\xi$ and $\eta$ be in $\mathfrak{g}_q^H$. Then
   \begin{equation}\label{A_lemma_eqn}
     \left[\xi_G(g), \eta_G(g)\right]^V = \left(\operatorname{V}_q\left([\xi, \eta]\right)\right)_G(g).
   \end{equation}
\end{lemma}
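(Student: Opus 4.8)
The plan is to evaluate the vertical bracket by realizing the two horizontal vectors through the horizontal parts of the fundamental (right-invariant) vector fields $\xi_G,\eta_G$, whose ordinary bracket is known, and then to correct for the vertical pieces. I first recall from the Remark following Theorem~\ref{RS_Lifting_theorem_statement} that $[\xi_G(g),\eta_G(g)]^V$ is computed from \emph{any} horizontal extensions of the horizontal vectors $\xi_G(g),\eta_G(g)\in T^H_gG$; these vectors are horizontal because $\xi,\eta\in\mathfrak{g}^H_q$ and $T^H_gG=TR_g(\mathfrak{g}^H_q)$ by \eqref{Relation_horizontal_spaces}. Independence of the extension is exactly the tensoriality of the O'Neill tensor $A_XY=\frac{1}{2}[X,Y]^V$ from \eqref{Definition_A}. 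I will therefore use the horizontal parts $\operatorname{H}(\xi_G)$ and $\operatorname{H}(\eta_G)$ of the fundamental fields as my horizontal extensions, since these agree with $\xi_G(g),\eta_G(g)$ at $g$.

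Writing $\operatorname{H}(\xi_G)=\xi_G-\operatorname{V}(\xi_G)$ and likewise for $\eta$, I expand $[\operatorname{H}(\xi_G),\operatorname{H}(\eta_G)]$ into four brackets. Evaluated at $g$, the vanishing $\operatorname{V}(\xi_G)(g)=\operatorname{V}(\eta_G)(g)=0$ (again from $\xi,\eta\in\mathfrak{g}^H_q$) kills the term $[\operatorname{V}(\xi_G),\operatorname{V}(\eta_G)]$ and turns the two cross terms into Lie derivatives of fields vanishing at $g$. So I must compute, at $g$, the three brackets $[\xi_G,\eta_G]$, $[\xi_G,\operatorname{V}(\eta_G)]$ and $[\operatorname{V}(\xi_G),\eta_G]$, and extract their vertical parts.

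For the first, $\xi\mapsto\xi_G$ is a Lie algebra anti-homomorphism for a left action (the fields are right-invariant), so $[\xi_G,\eta_G]=-[\xi,\eta]_G$, with value $-TR_g[\xi,\eta]$ and vertical part $-(\operatorname{V}_q([\xi,\eta]))_G(g)$. For the cross terms I use that the flow of $\xi_G$ is $L_{\exp(t\xi)}$, so $[\xi_G,\operatorname{V}(\eta_G)](g)=\frac{d}{dt}\big|_{0}TL_{\exp(-t\xi)}\big(\operatorname{V}(\eta_G)(\exp(t\xi)g)\big)$. Here bi-invariance enters decisively through the Ad-equivariance \eqref{Ad_relations_projections_dash}, $\operatorname{V}_{q(t)}=\operatorname{Ad}_{\exp(t\xi)}\circ\operatorname{V}_q\circ\operatorname{Ad}_{\exp(-t\xi)}$ with $q(t)=\exp(t\xi)q$. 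Combining this with $L_{\exp(-t\xi)}\circ R_{\exp(t\xi)g}=R_g\circ\operatorname{conj}_{\exp(-t\xi)}$ and $\frac{d}{dt}\big|_0\operatorname{Ad}_{\exp(-t\xi)}\eta=-[\xi,\eta]$ collapses the expression to the manifestly vertical $-(\operatorname{V}_q([\xi,\eta]))_G(g)$. The same computation with $\xi,\eta$ interchanged, together with antisymmetry, gives $[\operatorname{V}(\xi_G),\eta_G](g)=-(\operatorname{V}_q([\xi,\eta]))_G(g)$ as well.

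Writing $Z:=(\operatorname{V}_q([\xi,\eta]))_G(g)$, the vertical part of $[\operatorname{H}(\xi_G),\operatorname{H}(\eta_G)](g)$ is then $-Z-(-Z)-(-Z)=Z$, which is \eqref{A_lemma_eqn}. I expect the main obstacle to be the sign bookkeeping: bracketing the fundamental fields directly would give $-Z$, the wrong sign, and it is precisely the two vertical correction terms — each contributing $+Z$ after the sign flips in the expansion — that repair this. Producing those corrections correctly is exactly where bi-invariance, through the Ad-equivariance of the vertical projection, is indispensable, and is the crux of the argument.
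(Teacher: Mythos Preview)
Your proof is correct, but it takes a different route from the paper's. The paper observes that the \emph{left-invariant} vector field $H^\xi(h):=TL_h(\operatorname{Ad}_{g^{-1}}\xi)$ is a horizontal extension of $\xi_G(g)$: since $\operatorname{Ad}_{g^{-1}}\xi\in\mathfrak{g}_a^H$ and bi-invariance gives $TL_h(\mathfrak{g}_a^H)\perp TL_h(\mathfrak{g}_a^V)=T_h^VG$, the field is horizontal at every $h$. The bracket of left-invariant fields is then simply $[H^\xi,H^\eta]=l_{\operatorname{Ad}_{g^{-1}}[\xi,\eta]}$, which evaluated at $g$ is $[\xi,\eta]_G(g)$, and taking the vertical part yields \eqref{A_lemma_eqn} in one line.

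Your approach instead uses the horizontal projections of the right-invariant fundamental fields as extensions, which forces you to expand into four brackets and compute the two cross terms $[\xi_G,\operatorname{V}(\eta_G)](g)$ and $[\operatorname{V}(\xi_G),\eta_G](g)$ via the flow of $\xi_G$. Your computation of these is correct, and your observation that the $+2Z$ contributed by the cross terms is exactly what repairs the ``wrong'' sign $-Z$ from the naive bracket $[\xi_G,\eta_G]=-[\xi,\eta]_G$ is a nice structural insight. Both arguments consume bi-invariance at the same essential point: the paper needs it so that left-invariant fields with values in $\mathfrak{g}_a^H$ are horizontal, while you need it for the $\operatorname{Ad}$-equivariance \eqref{Ad_relations_projections_dash} of the vertical projection. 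The paper's argument is considerably shorter; yours makes the sign mechanism more transparent and would generalize more readily to situations where globally horizontal left-invariant extensions are not available.
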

\begin{proof}
 In order to compute the left hand side we extend $\xi_G(g)$ to a horizontal vector field $H^\xi$ on $G$. Namely, $H^\xi(h) = l_{\operatorname{Ad}_{g^{-1}}(\xi)}(h)$, where $l_\eta$ denotes the left-invariant vector field $l_\nu(h) = h \nu$ for any $\nu \in \mathfrak{g}$. Similarly $H^\eta(h) =  l_{\operatorname{Ad}_{g^{-1}}(\eta)}(h)$. Now
\begin{equation}
    \left[\xi_G(g), \eta_G(g)\right]^V  = \left[H^\xi, H^\eta\right]^V(g) = \left(l_{\operatorname{Ad}_{g^{-1}}[\xi, \eta]}(g)\right)^V= \left([\xi, \eta]_G(g)\right)^V = \left(\operatorname{V}_q\left([\xi, \eta]\right)\right)_G(g).\nonumber
\end{equation}
\end{proof}
\begin{theorem}\label{Normal_metrics_lifting_proposition_statement}
   Let $q(t) \in Q$ be a Riemannian cubic spline with horizontal generator $\bar{\mathbf{J}}(\dot q(t)) \in \mathfrak{g}_{q(t)}^H$. A horizontal lift $g(t) \in G$ of $q(t)$ is a Riemannian cubic if and only if
\begin{align}\label{Normal_metrics_lifting_proposition}
    \operatorname{V}_q\left([\bar{\mathbf{J}}, \dot{\bar{\mathbf{J}}}]\right) = 0,\quad \text{ for all }t.
\end{align}
\end{theorem}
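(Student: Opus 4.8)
The plan is to deduce this theorem directly from the general Riemannian-submersion criterion in Theorem \ref{RS_Lifting_theorem_statement}, specialized to the map $\Pi: G \rightarrow Q$, $g \mapsto ga$, which is a Riemannian submersion by the discussion in Section \ref{Section_Normal_metrics}. Taking $\tilde{Q} = G$ and $\tilde{q}(t) = g(t)$ in that theorem, the horizontal lift $g(t)$ of the cubic $q(t)$ is again a cubic if and only if $[\dot{g}, \frac{D}{Dt}\dot{g}]^V = 0$ for all $t$. The entire task therefore reduces to rewriting this abstract vertical bracket as $\operatorname{V}_q([\bar{\mathbf{J}}, \dot{\bar{\mathbf{J}}}])$, which is exactly what Lemma \ref{A_lemma} is designed to do.

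First I would express the two vectors appearing in the bracket in terms of the horizontal generator. Since $g(t)$ is the horizontal lift of $q(t)$, its velocity is $\dot{g} = (\bar{\mathbf{J}})_G(g) = TR_g \bar{\mathbf{J}}$ with $\bar{\mathbf{J}} \in \mathfrak{g}_q^H$. For the covariant acceleration I would invoke Proposition \ref{Covariant_derivative_main_proposition}{\bf (i)}, which gives $\frac{D}{Dt}\dot{g} = (\dot{\bar{\mathbf{J}}} + \operatorname{ad}^\dagger_{\bar{\mathbf{J}}}\bar{\mathbf{J}})_G(g)$; because the metric $\gamma$ is bi-invariant we have $\operatorname{ad}^\dagger = -\operatorname{ad}$, so $\operatorname{ad}^\dagger_{\bar{\mathbf{J}}}\bar{\mathbf{J}} = -[\bar{\mathbf{J}}, \bar{\mathbf{J}}] = 0$ and hence $\frac{D}{Dt}\dot{g} = (\dot{\bar{\mathbf{J}}})_G(g) = TR_g \dot{\bar{\mathbf{J}}}$.

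The one point that requires care -- and the only real prerequisite beyond formal substitution -- is that Lemma \ref{A_lemma} applies only when both of its arguments are horizontal. Horizontality of the first argument is immediate from $\bar{\mathbf{J}} \in \mathfrak{g}_q^H$. For the second I would appeal to the horizontality relation $\operatorname{V}_q(\dot{\bar{\mathbf{J}}}) = 0$ from \eqref{Horizontality_relations_1}, so that $\dot{\bar{\mathbf{J}}} \in \mathfrak{g}_q^H$ as well; via \eqref{Relation_horizontal_spaces} both $\dot{g}$ and $\frac{D}{Dt}\dot{g}$ then lie in $T^H_g G$, as is needed even to form $[\,\cdot\,,\,\cdot\,]^V$. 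Applying Lemma \ref{A_lemma} with $\xi = \bar{\mathbf{J}}$ and $\eta = \dot{\bar{\mathbf{J}}}$ yields $[\dot{g}, \frac{D}{Dt}\dot{g}]^V = (\operatorname{V}_q([\bar{\mathbf{J}}, \dot{\bar{\mathbf{J}}}]))_G(g)$. Finally, since $(\,\cdot\,)_G(g) = TR_g$ is a linear isomorphism onto $T_gG$, this vector vanishes if and only if $\operatorname{V}_q([\bar{\mathbf{J}}, \dot{\bar{\mathbf{J}}}]) = 0$; combined with Theorem \ref{RS_Lifting_theorem_statement} this gives the stated equivalence. I expect the computation of $\frac{D}{Dt}\dot{g}$ and the verification of horizontality of $\dot{\bar{\mathbf{J}}}$ to be the only substantive steps, with everything else being a direct citation of the two preceding results.
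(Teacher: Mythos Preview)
Your proposal is correct and follows essentially the same route as the paper: both arguments reduce the claim to Theorem~\ref{RS_Lifting_theorem_statement}, write $\dot g=(\bar{\mathbf J})_G(g)$ and $\tfrac{D}{Dt}\dot g=(\dot{\bar{\mathbf J}})_G(g)$ with $\bar{\mathbf J},\dot{\bar{\mathbf J}}\in\mathfrak g_q^H$, and then invoke Lemma~\ref{A_lemma} to identify $[\dot g,\tfrac{D}{Dt}\dot g]^V$ with $(\operatorname V_q([\bar{\mathbf J},\dot{\bar{\mathbf J}}]))_G(g)$. You are simply a bit more explicit than the paper in justifying $\tfrac{D}{Dt}\dot g=(\dot{\bar{\mathbf J}})_G(g)$ via bi-invariance and in citing \eqref{Horizontality_relations_1} for the horizontality of $\dot{\bar{\mathbf J}}$.
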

\begin{proof}
Since $\dot{g} = \left(\bar{\mathbf{J}}\right)_G(g)$ and $D_t\dot{g} =  \big(\dot{\bar{\mathbf{J}}}\big)_G(g)$ with both $\bar{\mathbf{J}}$ and $\dot{\bar{\mathbf{J}}}$ in $\mathfrak{g}_q^H$, Lemma \ref{A_lemma} gives
   \begin{equation}
     \left[\dot{g}, \frac{D}{Dt}\dot{g}\right]^V = \left(\operatorname{V}_q\left([\bar{\mathbf{J}}, \dot{\bar{\mathbf{J}}}]\right)\right)_G(g).
   \end{equation}
This expression vanishes according to Proposition \ref{RS_Lifting_proposition}, and therefore $\operatorname{V}_q\left([\bar{\mathbf{J}}, \dot{\bar{\mathbf{J}}}]\right) = 0$.
\end{proof}
\begin{remark}{\rm
If $Q$ is a symmetric space, then relations \eqref{Symmetric_space_inclusions} hold. Therefore $\operatorname{V}_q\left([\bar{\mathbf{J}}, \dot{\bar{\mathbf{J}}}]\right) = 0$ is equivalent to $ [\bar{\mathbf{J}}, \dot{\bar{\mathbf{J}}}] = 0$. This property was the basis for the proof of Theorem \ref{Lifting_symmetric_spaces}.
}
\end{remark}
\section{Extended analysis: Reduction by isotropy subgroup}\label{Section-Extended_analysis_Reduction_by_isotropy_subgroup}
In the previous section, we analyzed the relationship between Riemannian cubics on $Q$ and horizontal curves on $G$. More precisely, we gave a necessary and sufficient condition 
guaranteeing the existence of a horizontally lifted cubic on 
$G$ covering a given cubic on $Q$. In the present section we include in our considerations the non-horizontal curves on $G$. We show that certain non-horizontal geodesics on $G$ project to cubics. We then extend the analysis in the following way. We reduce the Riemannian cubic variational problem on $G$ by the isotropy subgroup $G_a$ of a point $a \in Q$. The reduced Lagrangian couples horizontal and vertical parts of the motion, which accounts for the absence of a general horizontal lifting property. The reduced form of the equations reveals the obstruction for a cubic on $G$ to project to a cubic on $Q$. The main technical tool in this section is second-order Lagrange--Poincar\'e reduction. The main references are \cite{CeMaRa2001} for the first-order theory and \cite{Gay-BalmazEtAl2011HOLPHP} for the recent generalization to higher order.
\subsection{Setting}
Let $G$ be a Lie group with bi-invariant metric $\gamma_G$, acting transitively from the left on a manifold $Q$ with the normal metric $\gamma_Q$. Recall the Riemannian submersion $\Pi: G \rightarrow Q$ given by $g \mapsto ga$ for a fixed element $a \in Q$. Let $G_a \subset G$ be the stabilizer of $a$. Consider the right action
\[
    \psi: G \times G_a \rightarrow G\,, \quad (g, h) \mapsto gh\,.
\]
This action is free and the projection from $G$ onto the quotient space $G/G_a$ is a submersion. Moreover the map $G/G_a \rightarrow Q$ given by $[g] \mapsto ga$ is a diffeomorphism. The ingredients $\left(G, Q \cong G/G_a, G_a, \Pi, \psi\right)$ therefore constitute a principal fiber bundle with total space $G$, base manifold $Q$, structure group $G_a$, projection $\Pi$, and action $\psi$. The Lie algebra of the structure group $G_a$ is $\mathfrak{g}_a= \mathfrak{g}_a^V$. Recall the vertical and horizontal projections, 
$\operatorname{V}_q: \mathfrak{g} \rightarrow \mathfrak{g}_q^V$ and $\operatorname{H}_q: \mathfrak{g} \rightarrow 
\mathfrak{g}_q^H$, for any $q \in Q$. Note that for any 
$g \in G$ with $ga = q$,
  \begin{align}\label{Ad_relations_projections}
     \operatorname{Ad}_g \circ \operatorname{H}_a \circ \operatorname{Ad}_g^{-1} = \operatorname{H}_q \quad \mbox{and} \quad \operatorname{Ad}_g \circ \operatorname{V}_a \circ \operatorname{Ad}_g^{-1} = \operatorname{V}_q\,.
  \end{align}
We equip $G$ with the principal bundle connection $\mathcal{A}$, 
\begin{equation}\label{PFB_mechanical_connection}
   \mathcal{A}: TG \rightarrow \mathfrak{g}_a\,, \quad v_g \mapsto \mathcal{A}_g(v_g):= \operatorname{V_a}(TL_{g^{-1}} v_g).
\end{equation}
Recall from Section \ref{Section_Normal_metrics} that $\gamma_G$, together with the projection $\Pi$, induces a splitting of $TG$ into horizontal and vertical subbundles $TG = T^HG \oplus T^VG$. This splitting coincides with the one prescribed by the connection $\mathcal{A}$, that is, $T_g^HG = \ker \mathcal{A}_g$ and $T_g^VG = \ker T_g\Pi$. The curvature of $\mathcal{A}$ is the $\mathfrak{g}$-valued two-form 
\begin{align}
\begin{split}
\label{LP_curvature}
\mathcal{B} (u _g , v _g )&=[ V_a( TL_{g ^{-1}} u _g ),V_a( TL_{g ^{-1}} v _g ) ]- V_a \left( [TL_{g ^{-1}} u _g , TL_{g ^{-1}} v _g ] \right) \\
&= - V_a \left( [H_a (TL_{g ^{-1}} u _g ),H_a( TL_{g ^{-1}} v _g ) ]\right) 
\end{split}
\end{align}
for $u_g, v_g \in T_gG$. Consider the following action of $G_a$ on $G \times \mathfrak{g}_a$,
\begin{equation}
  G_a \times (G \times \mathfrak{g}_a) \rightarrow G \times \mathfrak{g}_a, \quad (h, g, \xi) \mapsto (gh, \operatorname{Ad}^{-1}_h \xi).
\end{equation}
We define the associated adjoint vector bundle over $Q$, $\tilde{\mathfrak{g}}_a := (G \times \mathfrak{g}_a)/G_a$. The
equivalence class, i.e., orbit, of $(g, \xi) \in G \times \mathfrak{g}_a$ will be denoted by square brackets, 
$\sigma =[g, \xi] \in \tilde{\mathfrak{g}}_a$. The principal connection $\mathcal{A}$ induces a linear connection on  $\tilde{\mathfrak{g}}_a$ with covariant derivative
\begin{equation}\label{Cov_der_adjoint_bundle}
  \frac{D^{\mathcal{A}}}{Dt} [g(t), \xi(t)] = \left[g(t), \dot{\xi}(t) + \left[\mathcal{A}(\dot{g}(t)), \xi(t)\right]\right].
\end{equation}
 We will sometimes use the shorthand $\dot{\sigma} := \frac{D^{\mathcal{A}}}{Dt} \sigma$ where $\sigma(t)$ is a curve in $\tilde{\mathfrak{g}}_a$. Moreover, we define the map
\begin{equation}\label{Ad_map_PFB}
i:\tilde{\mathfrak{g}}_a \rightarrow \mathfrak{g}, \quad 
[g, \eta] \mapsto i([g, \eta ]):=\operatorname{Ad}_{g} \eta,
\end{equation}
and write $[g, \eta]=:\sigma \mapsto \bar \sigma :=i([g,\eta])$, 
as shorthand. Note that $i([g, \eta ]) \in \mathfrak{g}_{ga}^V$. We introduce the fiber-wise inner product $\bar{\gamma}$ on $\tilde{\mathfrak{g}}_a$ given by
\[
\bar{\gamma}(\sigma, \rho) := \gamma(\bar{\sigma}, \bar{\rho})
\]
and its corresponding norm is denoted by $\|.\|_{\tilde{\mathfrak{g}}_a}$.
We define the $\tilde{\mathfrak{g}}_a$-valued reduced curvature $2$-form $\widetilde{\mathcal{B}}$,
\[
\widetilde{\mathcal{B}}(u_q, v_q) := [g, \mathcal{B}(u_g, v_g)],
\]
for $u_q, v_q \in T_qQ$, where $g \in G$ and $u_g, v_g \in T_gG$ are such that $\Pi(g) = q$ and $T_g \Pi(u_g) = u_q$, $T_g \Pi(v_g) = v_q$. 
\subsection{First-order Lagrange--Poincar\'e reduction}
We start by recalling first-order Lagrange--Poincar\'e reduction, which makes use of the bundle diffeomorphism
\begin{equation}\label{Bundle_diffeomorphism_alpha_1}
  \alpha_{\mathcal{A}}^{(1)}: TG/G_a \rightarrow TQ \times_Q \tilde{\mathfrak{g}}_a, \quad [g, \dot{g}] \mapsto (q, \dot{q}) \times [g, \mathcal{A}(\dot{g})].
\end{equation}
Here we introduced the quotient $TG/G_a$ of $TG$ by the natural action of $G_a$, whose elements we denote by $[g, \dot{g}] \in TG/G_a$. We also defined $(q, \dot{q}) := T_g \Pi(g, \dot{g})$ for any representative $(g,\dot{g})$ of $[g, \dot{g}]$. Let $L: TG \rightarrow \mathbb{R}$ be a $G_a$-invariant Lagrangian. The reduced Lagrangian $\ell$ is defined by $L = \ell \circ \alpha^{(1)}_{\mathcal{A}}$. In order to compute the Lagrange--Poincar\'e equations one takes constrained variations in the reduced variable space. Take for instance the kinetic energy Lagrangian $L(g, \dot{g}) = \frac{1}{2} \left\|\dot{g}\right\|_g^2$. Hamilton's principle $\delta S = 0$ for $S = \int_0^1 L \, dt$ leads to the Euler-Lagrange equation $D_t \dot{g} = 0$. This is the geodesic equation on $G$. We derive the corresponding Lagrange--Poincar\'e equations. The reduced Lagrangian $\ell: TQ \times_Q \tilde{\mathfrak{g}}_a$ is given by
\begin{equation}
  \ell(q, \dot{q}, \sigma) = \frac{1}{2} \left\|\dot{q}\right\|^2_q + \frac{1}{2} \left\|\sigma\right\|^2_{\tilde{\mathfrak{g}}_a} =  \frac{1}{2} \left\|\dot{q}\right\|^2_q + \frac{1}{2} \left\|\bar{\sigma}\right\|^2_{\mathfrak{g}}, 
\end{equation}
where we recall that $\bar{\sigma} := i(\sigma)$. We will need the following result.
\begin{lemma}\label{Lemma_delta_sigma_bar} Consider the map $i:\tilde{\mathfrak{g}  } _a \rightarrow \mathfrak{g}  $ defined in \eqref{Ad_map_PFB} and a curve $ \sigma _\varepsilon \in \tilde{\mathfrak{g}  } _a$, covering the curve $ q _\varepsilon \in Q$. Then, we have the formula
\[
\frac{d}{d\varepsilon} i ( \sigma _\varepsilon )=i \left( \frac{D^ \mathcal{A} }{D\varepsilon} \sigma _\varepsilon \right) + F\left( \frac{d}{d\varepsilon} q _\varepsilon , \sigma \right),
\]
where the covariant derivative $D^{\mathcal{A}}$ was defined in \eqref{Cov_der_adjoint_bundle} and the map $F:TQ \times_Q  \tilde{ \mathfrak{g}  }_a \rightarrow \mathfrak{g}  $ is defined by
\[
F( v _q, \sigma _q ):= H_q\left( [ TR_{g^{-1}}v_g, \operatorname{Ad}_g \eta ] \right)  = [H_q(TR_{g^{-1}}v_g ), \operatorname{Ad}_g \eta ],
\]
where $ v _g \in TG$ and $ \eta \in \mathfrak{g}  _a $ are such that $T_g \pi \left( v _g \right) = v _q $ and $[g, \eta ]= \sigma _q $.
\end{lemma}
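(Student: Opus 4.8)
The plan is to pick an explicit representative and differentiate the adjoint action directly. I would write $\sigma_\varepsilon = [g(\varepsilon), \eta(\varepsilon)]$, where $g(\varepsilon) \in G$ is any curve covering $q_\varepsilon$, i.e. $\Pi(g(\varepsilon)) = q_\varepsilon$, and $\eta(\varepsilon) \in \mathfrak{g}_a$. Then $i(\sigma_\varepsilon) = \operatorname{Ad}_{g(\varepsilon)}\eta(\varepsilon)$, and applying the product rule together with the $\operatorname{Ad}_g$-analogue of \eqref{Time_derivative_Ad}, namely $\left.\frac{d}{d\varepsilon}\right|_{\zeta\text{ fixed}}\operatorname{Ad}_{g(\varepsilon)}\zeta = [\xi_R, \operatorname{Ad}_g\zeta]$ with the spatial velocity $\xi_R := TR_{g^{-1}}\dot g$, gives
\[
\frac{d}{d\varepsilon} i(\sigma_\varepsilon) = \operatorname{Ad}_g \dot\eta + [\xi_R, \operatorname{Ad}_g \eta].
\]

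Next I would compute the first right-hand term of the claimed identity in the same representative. By the definition \eqref{Cov_der_adjoint_bundle} of $D^{\mathcal{A}}$ we have $\frac{D^{\mathcal{A}}}{D\varepsilon}\sigma_\varepsilon = [g, \dot\eta + [\mathcal{A}(\dot g), \eta]]$, whence $i\big(\frac{D^{\mathcal{A}}}{D\varepsilon}\sigma_\varepsilon\big) = \operatorname{Ad}_g\dot\eta + \operatorname{Ad}_g[\mathcal{A}(\dot g), \eta]$. The key manipulation is to rewrite the second term. Since $\mathcal{A}(\dot g) = \operatorname{V}_a(\xi_L)$ with $\xi_L := TL_{g^{-1}}\dot g$ by \eqref{PFB_mechanical_connection}, and since $\operatorname{Ad}_g \xi_L = \xi_R$, the relation \eqref{Ad_relations_projections} ($\operatorname{Ad}_g \circ \operatorname{V}_a \circ \operatorname{Ad}_g^{-1} = \operatorname{V}_q$) yields $\operatorname{Ad}_g \operatorname{V}_a(\xi_L) = \operatorname{V}_q(\xi_R)$, so that
\[
i\left(\frac{D^{\mathcal{A}}}{D\varepsilon}\sigma_\varepsilon\right) = \operatorname{Ad}_g\dot\eta + [\operatorname{V}_q(\xi_R), \operatorname{Ad}_g \eta].
\]
Subtracting this from $\frac{d}{d\varepsilon} i(\sigma_\varepsilon)$ leaves $[\xi_R - \operatorname{V}_q(\xi_R), \operatorname{Ad}_g \eta] = [\operatorname{H}_q(\xi_R), \operatorname{Ad}_g \eta]$. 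Setting $v_g := \dot g$, which is a lift of $v_q := \frac{d}{d\varepsilon} q_\varepsilon$ with $\xi_R = TR_{g^{-1}} v_g$, this is precisely the second expression for $F(v_q, \sigma)$.

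Finally, I would verify that the two displayed formulas for $F$ agree, which closes the argument. Writing $\xi_R = \operatorname{H}_q(\xi_R) + \operatorname{V}_q(\xi_R)$ and noting $\operatorname{Ad}_g \eta \in \mathfrak{g}_q^V$ (because $\eta \in \mathfrak{g}_a = \mathfrak{g}_a^V$ and $\operatorname{Ad}_g \mathfrak{g}_a^V = \mathfrak{g}_q^V$), the bracket $[\operatorname{V}_q(\xi_R), \operatorname{Ad}_g \eta]$ lies in $[\mathfrak{g}_q^V, \mathfrak{g}_q^V] \subset \mathfrak{g}_q^V$ since $\mathfrak{g}_q^V$ is the isotropy subalgebra, hence is vertical, while $[\operatorname{H}_q(\xi_R), \operatorname{Ad}_g \eta]$ lies in $[\mathfrak{g}_q^H, \mathfrak{g}_q^V] \subset \mathfrak{g}_q^H$ by Ad-invariance of the metric, hence is horizontal; applying $\operatorname{H}_q$ to $[\xi_R, \operatorname{Ad}_g \eta]$ therefore selects exactly $[\operatorname{H}_q(\xi_R), \operatorname{Ad}_g \eta]$. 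A brief check that $F$ is independent of the chosen lift $v_g$ (only $\operatorname{H}_q$ enters) and of the representative $(g, \eta) \mapsto (gh, \operatorname{Ad}_{h^{-1}}\eta)$ (since $\operatorname{Ad}_{gh}\operatorname{Ad}_{h^{-1}}\eta = \operatorname{Ad}_g\eta$) completes the proof. The only real obstacle is bookkeeping of the left/right trivializations: keeping $\xi_L$ and $\xi_R$ straight and invoking $\operatorname{Ad}_g \circ \operatorname{V}_a \circ \operatorname{Ad}_g^{-1} = \operatorname{V}_q$ at the right step. Once the $\operatorname{Ad}_g$-derivative identity is in hand, the rest is algebra in $\mathfrak{g}$.
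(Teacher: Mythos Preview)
The paper states this lemma without proof, so there is nothing to compare against directly. Your argument is correct and is the natural direct computation one would expect the authors had in mind: write $\sigma_\varepsilon=[g(\varepsilon),\eta(\varepsilon)]$, differentiate $\operatorname{Ad}_{g}\eta$ using the right-trivialized velocity $\xi_R$, identify $\operatorname{Ad}_g\mathcal{A}(\dot g)=\operatorname{V}_q(\xi_R)$ via \eqref{Ad_relations_projections}, and observe that the difference is $[\operatorname{H}_q(\xi_R),\operatorname{Ad}_g\eta]$. Your verification that the two displayed formulas for $F$ agree correctly invokes $[\mathfrak{g}_q^V,\mathfrak{g}_q^V]\subset\mathfrak{g}_q^V$ (isotropy subalgebra) and $[\mathfrak{g}_q^H,\mathfrak{g}_q^V]\subset\mathfrak{g}_q^H$ (Ad-invariance of the bi-invariant metric, cf.\ the remark after \eqref{Symmetric_space_inclusions_dash}), both of which hold in the setting of Section~\ref{Section-Extended_analysis_Reduction_by_isotropy_subgroup}; the well-definedness checks are also fine.
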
 
In short, $\delta \bar{\sigma} = i(\delta \sigma) + F(\delta q, \sigma)$, where $\delta \sigma := \left.\frac{D^{\mathcal{A}}}{D\varepsilon}\right|_{\varepsilon = 0} \sigma$. Note that $ i(\delta \sigma) \in \mathfrak{g}_q^V$ and $F(\delta q, \sigma) \in \mathfrak{g}_q^H$. One computes
\begin{align}
  \delta S = \int_0^1 \gamma_Q( \dot{q}, D_\varepsilon \dot{q}_\varepsilon) + \gamma( \bar{\sigma}, \delta \bar{\sigma})\, dt =  \int_0^1 -\gamma_Q(D_t \dot{q}, \delta q) + \bar{\gamma}(\sigma, \delta \sigma) \, dt. \nonumber
\end{align}
Using the constrained variations
\[
\delta \sigma =\frac{D^ \mathcal{A} }{Dt} \eta -[ \sigma , \eta ]+\widetilde{\mathcal{B}}( \delta q, \dot q)\in \tilde{ \mathfrak{g}  }_a
\]
gives the horizontal and vertical Lagrange--Poincar\'e equations
\begin{align}\label{Original_LP_equations}
  \frac{D}{Dt} \dot{q} = - \left<\sigma, \mathbf{i}_{\dot{q}}\widetilde{\mathcal{B}}\right>^\sharp, \quad \frac{D^{\mathcal{A}}}{Dt} \sigma = 0,
\end{align}
where we defined
\[
  \gamma_Q\bigg(\left<\rho, \mathbf{i}_{v_q}\widetilde{\mathcal{B}}\right>^\sharp, w_q\bigg) = \bar{\gamma}\left(\rho, \mathbf{i}_{v_q}\widetilde{\mathcal{B}}(w_q)\right) = \bar{\gamma}\left(\rho, \widetilde{\mathcal{B}}(v_q, w_q)\right),
\]
for all $v_q, w_q \in T_qQ$ and $\rho \in \left(\tilde{\mathfrak{g}}_a\right)_q$.
Using the expression \eqref{LP_curvature} one computes that
\begin{equation}\label{B_tilde}
   \left<\rho, \mathbf{i}_{v_q}\widetilde{\mathcal{B}}\right>^\sharp = \left(\left[\bar{\mathbf{J}}(v_q), \bar{\rho}\right]\right)_Q(q) = - \nabla_{v_q}\bar{\rho}_Q,
\end{equation}
so that \eqref{Original_LP_equations} becomes
\begin{equation}\label{Geodesic_LP_equations}
   \frac{D}{Dt} \dot{q} =  \nabla_{\dot{q}}\bar{\sigma}_Q,   \quad  \frac{D^{\mathcal{A}}}{Dt} \sigma = 0.
\end{equation}
The first of these equations is the geodesic equation on $Q$ up to a forcing term on the right hand side. One recognizes again that horizontal geodesics ($\sigma = 0$) on $G$ project to geodesics on $Q$. 

\begin{remark}\rm \label{Rem_Ballistic_curves}The projections to $Q$ of non-horizontal geodesics in $G$ are called \emph{ballistic curves}  in \cite{AlKrLoMi2003}.
\end{remark} 
\begin{remark}\rm \label{EP_constant}
Note that since the Lagrangian $L(g, \dot{g}) = \frac{1}{2} \left\|\dot{g}\right\|_g^2$ is not only $G_a $-invariant but $G$-invariant, the Lagrange--Poincar\'e equations \eqref{Geodesic_LP_equations} can be further reduced to yield the Euler--Poincar\'e equations
\[
\frac{d}{dt} \xi = - \operatorname{ad}^\dagger _\xi \xi  =0, \quad \xi = TR_{g^{-1}}\dot g .
\]
We thus obtain that $ \xi $ is a constant. This form of the equation is however not adapted for our purpose since it does not involve the manifold $Q$.
\end{remark}
\begin{remark}\rm \label{Remark_LP_symmetric}
 If $Q$ is a symmetric space, then the first equation in \eqref{Geodesic_LP_equations} can be used together with the second equality of \eqref{B_tilde} to write
  \begin{equation}
    \dot{\bar{\mathbf{J}}}_Q(q) = \frac{D}{Dt} \dot{q} = - \left(\left[\bar{\mathbf{J}}, \bar{\sigma}\right]\right)_{Q}(q).  \nonumber
\end{equation}
Since both $\dot{\bar{\mathbf{J}}}$ and $\left[\bar{\mathbf{J}}, \bar{\sigma}\right]$ are in $\mathfrak{g}_{q}^H$ we conclude that $\dot{\bar{\mathbf{J}}} = - \left[\bar{\mathbf{J}}, \bar{\sigma}\right]$. The second equation in \eqref{Geodesic_LP_equations} together with Lemma \ref{Lemma_delta_sigma_bar} yields
\begin{align}
  \dot{\bar{\sigma}} = \frac{d}{dt} i(\sigma) = F(\dot{q}, \sigma) = \left[\bar{\mathbf{J}}, \bar{\sigma}\right], \nonumber
\end{align}
where we recall that $\bar{\sigma} := i(\sigma)$. For symmetric spaces, \eqref{Geodesic_LP_equations} is therefore equivalent to
\begin{align}
  \dot{\bar{\mathbf{J}}} = \left[\bar{\sigma}, \bar{\mathbf{J}}\right], \qquad \dot{\bar{\sigma}} =  \left[\bar{\mathbf{J}}, \bar{\sigma}\right].
\end{align}
In particular  $\xi = \bar{\mathbf{J}} + \bar{\sigma}$ is a constant, as in Remark \ref{EP_constant} above.
\end{remark}
\subsection{Example: $G = SO(3)$, $Q = S^2$.}
We work with the conventions defined in Remark \ref{Remark_Conventions}. Choose as anchor point $a$ the North pole $\mathbf{e}_z \in S^2$ and define the map
\begin{equation}
  (\cdot )_3 : \mathfrak{so}(3) \rightarrow \mathbb{R}, \quad \widehat{\mathbf{\Omega}} \mapsto (\widehat{\mathbf{\Omega}})_3:= \mathbf{\Omega} \cdot \mathbf{e}_z.\nonumber 
\end{equation}
The isotropy subgroup $SO(3)_a \cong S^1 \subset SO(3)$ corresponds to rotations around $\mathbf{e}_z$, and $\mathfrak{so}(3)_a$ is identified with $\mathbb{R}$ via the map $\lambda \mathbf{e}_z \mapsto \lambda$. The vector bundle $\widetilde{\mathfrak{so}(3)}_a$ is isomorphic to $S^2 \times \mathbb{R}$ via
\begin{equation}\label{Adjoint_bundle_identification_sphere}
 \widetilde{\mathfrak{so}(3)}_a \rightarrow S^2 \times \mathbb{R}, \quad  [\Lambda, \lambda \widehat{\mathbf{e}_z}] \mapsto (\Lambda \mathbf{e}_z, \lambda).
\end{equation}
In particular, it follows that the space of reduced variables $TS^2 \times_Q \widetilde{\mathfrak{so}(3)}_a$ can be identified with $TS^2 \times \mathbb{R}$. The map $\alpha^{(1)}_{\mathcal{A}}$ defined in \eqref{Bundle_diffeomorphism_alpha_1} becomes
\begin{equation}\label{Bundle_diffeomorphism_alpha_1_sphere}
  \alpha^{(1)}_{\mathcal{A}}: TSO(3)/S^1 \rightarrow TS^2 \times \mathbb{R}, \quad [\Lambda, \dot{\Lambda}] \mapsto \big(\mathbf{x}, \dot{\mathbf{x}}, (\Lambda^{-1}\dot{\Lambda})_3\big).
\end{equation}
Moreover, the mapping \eqref{Ad_map_PFB} is, through the correspondence \eqref{Adjoint_bundle_identification_sphere},
\begin{equation}
  S^2 \times \mathbb{R} \rightarrow \mathfrak{so}(3), \quad (\mathbf{x}, \lambda) \mapsto \lambda \widehat{\mathbf{x}}.
\end{equation}
The first-order geodesic equations \eqref{Geodesic_LP_equations} on $SO(3)$ become
\begin{equation}
  \dot{\bar{\mathbf{J}}} = \bar{\boldsymbol{\sigma}} \times \bar{\mathbf{J}}, \quad \dot{\bar{\boldsymbol{\sigma}}} = \bar{\mathbf{J}} \times \bar{\boldsymbol{\sigma}}. \nonumber
\end{equation}
\subsection{Cubics and ballistic curves}\label{Sec-Cubics_and_ballistic_curves}
In this section we show that certain types of ballistic curves in a symmetric space $Q$ are Riemannian cubics. Recall from Remark \ref{Rem_Ballistic_curves} that a ballistic curve is the projection $q(t) \in Q$ of a geodesic $g(t) \in G$. Geodesics in $G$ are given by the Lagrange--Poincar\'e equations \eqref{Geodesic_LP_equations}. As explained in Remark \ref{Remark_LP_symmetric} these are equivalent to
\begin{equation}\label{LP1_equation_rewritten}
  \dot{\bar{\mathbf{J}}} = \left[\bar{\sigma}, \bar{\mathbf{J}}\right], \quad \dot{\bar{\sigma}} = \left[\bar{\mathbf{J}}, \bar{\sigma}\right],
\end{equation}
where $\bar{\mathbf{J}} := \bar{\mathbf{J}}(\dot{q})$.  Recall that for a curve $g(t) \in G$ with projection $q(t) \in Q$ we defined $\sigma = [g, \mathcal{A}(\dot{g})]$ and $\bar{\mathbf{J}} = \bar{\mathbf{J}}(\dot{q})$. As a consequence, the conserved right-invariant velocity is $TR_{g^{-1}} \dot{g} = \bar{\mathbf{J}} + \bar{\sigma}$, where $\bar{\sigma} := i(\sigma)$ with $i$ defined in \eqref{Ad_map_PFB}. Moreover, $\bar{\mathbf{J}}(t) = \operatorname{Ad}_{g(t)} \bar{\mathbf{J}}(0)$ and $\bar{\sigma}(t) =  \operatorname{Ad}_{g(t)}\bar{\sigma}(0)$.  
\begin{theorem}\label{Theorem_ballistic}
  The projection $q(t) \in Q$ of a geodesic $g(t) \in G$ is a Riemannian cubic if and only if at time $t = 0$
  \begin{equation}\label{Iff_ballistic}
    \left[\bar{\sigma}, \left[\bar{\sigma}, \left[\bar{\sigma}, \bar{\mathbf{J}}\right]\right]\right] + \left[\bar{\mathbf{J}}, \left[\bar{\mathbf{J}}, \left[\bar{\mathbf{J}}, \bar{\sigma}\right]\right]\right] = 0.
  \end{equation}
\end{theorem}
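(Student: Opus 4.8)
The plan is to combine the symmetric-space cubic equation \eqref{Cubic_symmetric_spaces}, which asserts that a curve $q(t)$ with horizontal generator $\bar{\mathbf{J}}$ is a Riemannian cubic if and only if $\operatorname{H}_{q(t)}\big(\dddot{\bar{\mathbf{J}}} + 2[\ddot{\bar{\mathbf{J}}}, \bar{\mathbf{J}}]\big) = 0$ for all $t$, with the reduced geodesic equations \eqref{LP1_equation_rewritten} for $g(t)$. The key simplification is that along a geodesic the right-invariant velocity $C := \bar{\mathbf{J}} + \bar{\sigma}$ is constant (cf.\ Remark \ref{Remark_LP_symmetric}) and $[C, \bar{\mathbf{J}}] = [\bar{\sigma}, \bar{\mathbf{J}}] = \dot{\bar{\mathbf{J}}}$, so that $\dot{\bar{\mathbf{J}}} = \operatorname{ad}_C \bar{\mathbf{J}}$. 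Consequently all time derivatives reduce to iterated brackets with the constant element $C$, namely $\ddot{\bar{\mathbf{J}}} = \operatorname{ad}_C^2 \bar{\mathbf{J}}$ and $\dddot{\bar{\mathbf{J}}} = \operatorname{ad}_C^3 \bar{\mathbf{J}}$.

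First I would substitute $C = \bar{\mathbf{J}} + \bar{\sigma}$ into these iterated brackets and expand, exploiting the $\mathbb{Z}_2$-grading encoded in the symmetric-space relations \eqref{Symmetric_space_inclusions}: assigning degree $0$ to $\mathfrak{g}^V_q$ and degree $1$ to $\mathfrak{g}^H_q$, the Lie bracket adds degrees modulo $2$, with $\bar{\mathbf{J}} \in \mathfrak{g}^H_q$ of degree $1$ and $\bar{\sigma} \in \mathfrak{g}^V_q$ of degree $0$. Tracking which of the expanded terms land in $\mathfrak{g}^H_q$ then isolates the horizontal part of $\dddot{\bar{\mathbf{J}}} + 2[\ddot{\bar{\mathbf{J}}}, \bar{\mathbf{J}}]$ directly, bypassing the projection $\operatorname{H}_q$. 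A short computation, in which the horizontal contribution of $2[\ddot{\bar{\mathbf{J}}}, \bar{\mathbf{J}}]$ partially cancels that of $\dddot{\bar{\mathbf{J}}}$, should yield
\[
\operatorname{H}_{q}\big(\dddot{\bar{\mathbf{J}}} + 2[\ddot{\bar{\mathbf{J}}}, \bar{\mathbf{J}}]\big) = [\bar{\mathbf{J}}, [\bar{\mathbf{J}}, [\bar{\mathbf{J}}, \bar{\sigma}]]] + [\bar{\sigma}, [\bar{\sigma}, [\bar{\sigma}, \bar{\mathbf{J}}]]],
\]
which is exactly the left-hand side of \eqref{Iff_ballistic}.

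Finally, I would upgrade this pointwise identity to a statement valid for all $t$. Writing $E(t)$ for the right-hand side above, the $\operatorname{Ad}$-equivariance $\bar{\mathbf{J}}(t) = \operatorname{Ad}_{g(t)}\bar{\mathbf{J}}(0)$ and $\bar{\sigma}(t) = \operatorname{Ad}_{g(t)}\bar{\sigma}(0)$, together with the fact that $\operatorname{Ad}_{g(t)}$ is a Lie-algebra automorphism, give $E(t) = \operatorname{Ad}_{g(t)} E(0)$. Since $\operatorname{Ad}_{g(t)}$ is invertible, the cubic condition $E(t) = 0$ for all $t$ holds if and only if $E(0) = 0$, which is precisely condition \eqref{Iff_ballistic} at $t = 0$. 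This establishes both directions of the equivalence at once.

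The main obstacle is the bookkeeping in the middle step: one must expand the three nested brackets of $C = \bar{\mathbf{J}} + \bar{\sigma}$ applied to $\bar{\mathbf{J}}$, retain only the terms of odd total degree by the grading, and then use antisymmetry — for instance $[[\bar{\mathbf{J}}, [\bar{\sigma}, \bar{\mathbf{J}}]], \bar{\mathbf{J}}] = -[\bar{\mathbf{J}}, [\bar{\mathbf{J}}, [\bar{\sigma}, \bar{\mathbf{J}}]]]$ and $[\bar{\sigma}, \bar{\mathbf{J}}] = -[\bar{\mathbf{J}}, \bar{\sigma}]$ — to combine the survivors into the symmetric form of \eqref{Iff_ballistic}. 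One should also verify that the discarded terms are genuinely vertical, equivalently that $E$ itself is horizontal; this is immediate from the grading, since each of its two summands is a bracket of odd total degree and therefore lies in $\mathfrak{g}^H_q$.
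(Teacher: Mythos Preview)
Your proposal is correct and follows essentially the same approach as the paper: both use the symmetric-space cubic equation \eqref{Cubic_symmetric_spaces}, the geodesic relations $\dot{\bar{\mathbf{J}}} = [\bar{\sigma}+\bar{\mathbf{J}}, \bar{\mathbf{J}}]$ to express higher derivatives as iterated brackets, the $\mathbb{Z}_2$-grading \eqref{Symmetric_space_inclusions} to isolate the horizontal part, and the $\operatorname{Ad}_{g(t)}$-equivariance to reduce the condition to $t=0$. The paper's computation is marginally slicker in that it first combines $\dddot{\bar{\mathbf{J}}} + 2[\ddot{\bar{\mathbf{J}}}, \bar{\mathbf{J}}] = [\bar{\sigma} - \bar{\mathbf{J}}, \ddot{\bar{\mathbf{J}}}]$ before splitting $\ddot{\bar{\mathbf{J}}}$ into horizontal and vertical parts, rather than fully expanding $\operatorname{ad}_C^3 \bar{\mathbf{J}}$, but the content is the same.
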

\begin{proof}
 Since $\operatorname{Ad}$ is a Lie automorphism it follows from  $\bar{\mathbf{J}}(t) = \operatorname{Ad}_{g(t)} \bar{\mathbf{J}}(0)$ and $\bar{\sigma}(t) =  \operatorname{Ad}_{g(t)}\bar{\sigma}(0)$ that \eqref{Iff_ballistic} holds at $t = 0$ if and only if it holds at all times. Furthermore we obtain from \eqref{LP1_equation_rewritten} that
  \begin{equation}
    \ddot{\bar{\mathbf{J}}} = \left[\bar{\mathbf{J}} + \bar{\sigma}, \dot{\bar{\mathbf{J}}}\right], \quad \dddot{\bar{\mathbf{J}}}  = \left[\bar{\mathbf{J}} + \bar{\sigma}, \ddot{\bar{\mathbf{J}}}\right]. \nonumber
  \end{equation}
Therefore, using also \eqref{Symmetric_space_inclusions_dash} and \eqref{Horizontality_relations_1}, 
\begin{align}
   &\operatorname{H}_q\left(\dddot{\bar{\mathbf{J}}} + 2\left[\ddot{\bar{\mathbf{J}}}, \bar{\mathbf{J}}\right]\right) = \operatorname{H}_q\left(\left[\bar{\sigma} - \bar{\mathbf{J}}, \ddot{\bar{\mathbf{J}}}\right]\right) = \left[\bar{\sigma}, \operatorname{H}_q\left(\ddot{\bar{\mathbf{J}}}\right)\right] - \left[\bar{\mathbf{J}}, \operatorname{V}_q\left(\ddot{\bar{\mathbf{J}}}\right)\right]\nonumber \\
&\qquad = \left[\bar{\sigma}, \left[\bar{\sigma}, \dot{\bar{\mathbf{J}}}\right]\right] - \left[\bar{\mathbf{J}}, \left[\bar{\mathbf{J}}, \dot{\bar{\mathbf{J}}}\right]\right] =  \left[\bar{\sigma}, \left[\bar{\sigma}, \left[\bar{\sigma}, \bar{\mathbf{J}}\right]\right]\right] +  \left[\bar{\mathbf{J}}, \left[\bar{\mathbf{J}}, \left[\bar{\mathbf{J}}, \bar{\sigma}\right]\right]\right].\nonumber
\end{align}
The theorem now follows from equation \eqref{Cubic_symmetric_spaces} for cubics in symmetric spaces.
\end{proof}
It is clear that \eqref{Iff_ballistic} is satisfied if $\bar{\sigma} = 0$ at time $t = 0$. This leads to geodesics $q(t)$ since the projection of a horizontal geodesic is a geodesic. Another class of solutions is given by the following corollaries
\begin{corollary}\label{Corollary_ballistic_example_dash}
  Let $q(t) \in Q$ be the projection of a geodesic $g(t) \in G$ that satisfies $ \left[\bar{\mathbf{J}}, \bar{\sigma}\right] = 0$ at time $t = 0$. Then, $q(t)$ is a Riemannian cubic.
\end{corollary}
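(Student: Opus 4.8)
The plan is to deduce this corollary immediately from Theorem \ref{Theorem_ballistic}, which already characterizes when the projection of a geodesic is a Riemannian cubic in terms of the single algebraic condition \eqref{Iff_ballistic} evaluated at $t = 0$. The only thing that remains to check is that the hypothesis $\left[\bar{\mathbf{J}}, \bar{\sigma}\right] = 0$ at $t = 0$ is strong enough to force \eqref{Iff_ballistic} to hold at $t = 0$.

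First I would inspect the structure of the left-hand side of \eqref{Iff_ballistic}. Writing out the two summands, I observe that the innermost bracket of the first is $\left[\bar{\sigma}, \bar{\mathbf{J}}\right]$ and the innermost bracket of the second is $\left[\bar{\mathbf{J}}, \bar{\sigma}\right]$. By antisymmetry of the Lie bracket these differ only by a sign, so both innermost brackets vanish under the hypothesis $\left[\bar{\mathbf{J}}, \bar{\sigma}\right] = 0$. Consequently each triple bracket collapses through $\left[\,\cdot\,, \left[\,\cdot\,, 0\right]\right] = 0$, and the entire left-hand side of \eqref{Iff_ballistic} vanishes at $t = 0$.

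Having verified \eqref{Iff_ballistic} at $t = 0$, I would then simply invoke the equivalence established in Theorem \ref{Theorem_ballistic} to conclude that $q(t)$ is a Riemannian cubic. There is no real obstacle here: the substantive content lies entirely in Theorem \ref{Theorem_ballistic}, and the present statement is the observation that, beyond the trivial case $\bar{\sigma} = 0$, the simplest sufficient condition is the commuting relation $\left[\bar{\mathbf{J}}, \bar{\sigma}\right] = 0$. If one wished to make the picture more transparent, one could additionally remark that this commuting condition in fact propagates to all times, since $\bar{\mathbf{J}}(t) = \operatorname{Ad}_{g(t)}\bar{\mathbf{J}}(0)$ and $\bar{\sigma}(t) = \operatorname{Ad}_{g(t)}\bar{\sigma}(0)$ and $\operatorname{Ad}$ is a Lie algebra automorphism, but this observation is not needed for the proof, which already reduces the verification to the instant $t = 0$.
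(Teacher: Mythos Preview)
Your proposal is correct and matches the paper's approach: the paper states this corollary without proof, presenting it as an immediate consequence of Theorem~\ref{Theorem_ballistic}, which is exactly what you do by observing that the hypothesis kills the innermost brackets in~\eqref{Iff_ballistic}.
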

\begin{corollary}\label{Corollary_ballistic_example}
  Let $q(t) \in Q$ be the projection of a geodesic $g(t) \in G$ that satisfies, at time $t = 0$,
  \begin{equation}\label{Ballistic_example_relations}
    \left[\bar{\mathbf{J}}, \left[\bar{\mathbf{J}}, \bar{\sigma}\right]\right] = c \bar{\sigma}\,, \quad \left[\bar{\sigma}, \left[\bar{\sigma}, \bar{\mathbf{J}}\right]\right] = c \bar{\mathbf{J}}\,,
  \end{equation}
for $c \in \mathbb{R}$. Then, $q(t)$ is a Riemannian cubic.
\end{corollary}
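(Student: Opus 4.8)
The plan is to verify that the hypotheses \eqref{Ballistic_example_relations} force the triple-bracket identity \eqref{Iff_ballistic} of Theorem \ref{Theorem_ballistic} to hold at $t = 0$. Since that theorem states \eqref{Iff_ballistic} is both necessary and sufficient for the ballistic curve $q(t)$ to be a Riemannian cubic, the whole argument collapses to a short algebraic substitution into the left-hand side of \eqref{Iff_ballistic}. The crucial point, already supplied by Theorem \ref{Theorem_ballistic}, is that it suffices to check the identity at the single time $t = 0$, so the relations \eqref{Ballistic_example_relations} assumed there are exactly what is needed.

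First I would abbreviate $A := \bar{\sigma}$ and $B := \bar{\mathbf{J}}$, so that the two assumptions read $[B, [B, A]] = cA$ and $[A, [A, B]] = cB$. The key observation is that each of the two triple brackets in \eqref{Iff_ballistic} contains one of these double brackets as its inner factor. Reading the first term from the inside out, $[\bar{\sigma}, [\bar{\sigma}, [\bar{\sigma}, \bar{\mathbf{J}}]]] = [A, [A, [A, B]]]$, and the inner double bracket $[A, [A, B]]$ equals $cB$ by the second relation, so $[A, [A, [A, B]]] = [A, cB] = c[A, B]$. Symmetrically, the second term $[\bar{\mathbf{J}}, [\bar{\mathbf{J}}, [\bar{\mathbf{J}}, \bar{\sigma}]]] = [B, [B, [B, A]]]$ has inner double bracket $[B, [B, A]] = cA$ by the first relation, whence $[B, [B, [B, A]]] = [B, cA] = c[B, A]$. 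Adding these and using antisymmetry of the Lie bracket gives $c[A, B] + c[B, A] = 0$, which is precisely \eqref{Iff_ballistic} at $t = 0$. Theorem \ref{Theorem_ballistic} then immediately yields that $q(t)$ is a Riemannian cubic.

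I do not expect any genuine obstacle here: once Theorem \ref{Theorem_ballistic} is in hand the corollary is a one-line consequence, and the only thing to watch is the bookkeeping of the nested brackets together with the sign from $[B, A] = -[A, B]$. The reason the statement deserves to be recorded separately is that the coupled eigenvalue-type relations \eqref{Ballistic_example_relations} single out a natural, readily checkable family of initial data---generalizing the commuting case of Corollary \ref{Corollary_ballistic_example_dash}, which corresponds to $c = 0$---for which the obstruction to projecting a geodesic on $G$ to a cubic on $Q$ vanishes identically.
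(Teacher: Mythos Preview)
Your proof is correct and is exactly the intended argument: the paper states this corollary without proof, since it is an immediate algebraic consequence of Theorem~\ref{Theorem_ballistic}, and your substitution $[A,[A,[A,B]]] + [B,[B,[B,A]]] = c[A,B] + c[B,A] = 0$ is precisely the verification required.
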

\begin{remark}\label{Remark_sphere_ballistic}{\rm
For the example $G = SO(3)$, $Q = S^2$, the solutions to equation \eqref{Iff_ballistic} are illustrated in Figure \ref{Figure_ballistic_sphere}. These solutions are fully described by Corollaries \ref{Corollary_ballistic_example_dash} and \ref{Corollary_ballistic_example}. Namely, by means of the identity 
\begin{equation}\notag
  \mathbf{a}\times(\mathbf{b} \times \mathbf{c}) = \mathbf{b}(\mathbf{a} \cdot \mathbf{c}) - \mathbf{c} (\mathbf{a} \cdot \mathbf{b})
\end{equation}
 for vectors in $\mathbb{R}^3$, \eqref{Iff_ballistic} is seen to be equivalent to
\begin{equation}\notag
  \left(\left\|\bar{\boldsymbol{\sigma}}\right\|^2 - \left\|\bar{\mathbf{J}}\right\|^2\right) \bar{\mathbf{J}} \times \bar{\boldsymbol{\sigma}} = 0.
\end{equation}
Possible solutions are given by $\bar{\mathbf{J}} \times \bar{\boldsymbol{\sigma}} = 0$, or by $\left\|\bar{\boldsymbol{\sigma}}\right\|^2 =  \left\|\bar{\mathbf{J}}\right\|^2$ at $t = 0$. The first case is equivalent to $\bar{\mathbf{J}} = 0$ or $\bar{\boldsymbol{\sigma}} = 0$ at $t = 0$, and therefore at all times. This corresponds to trivial projected curves $\mathbf{x}(t) = \mathbf{x}(0) \in S^2$, and to projections of horizontal geodesics, respectively. 

We proceed to analyze the alternative solution given by  $\left\|\bar{\boldsymbol{\sigma}}\right\|^2 =  \left\|\bar{\mathbf{J}}\right\|^2$. 
 For a given initial velocity $\dot{\mathbf{x}}(0) = \mathbf{v} \in T_{\mathbf{x}(0)}S^2$, the projection $\mathbf{x}(t) \in S^2$ describes a constant speed rotation of $\mathbf{x}(0)$ around the axis
\begin{equation}
  \mathbf{\Omega} = \bar{\mathbf{J}} + \bar{\boldsymbol{\sigma}} = \mathbf{x} \times \dot{\mathbf{x}} \pm \left\|\dot{\mathbf{x}}\right\| \mathbf{x}.\nonumber
\end{equation}
Hence, the (constant) rotation axis lies in the plane spanned by $\bar{\mathbf{J}}$ and $\mathbf{x}$, enclosing a $45^\circ$ angle with $\bar{\mathbf{J}}$. The curve $\mathbf{x}(t)$ moves with constant speed along a circle of radius $r = \frac{1}{\sqrt{2}}$. 
}
\end{remark} 
\begin{SCfigure}
\centering
\includegraphics[scale=0.1]{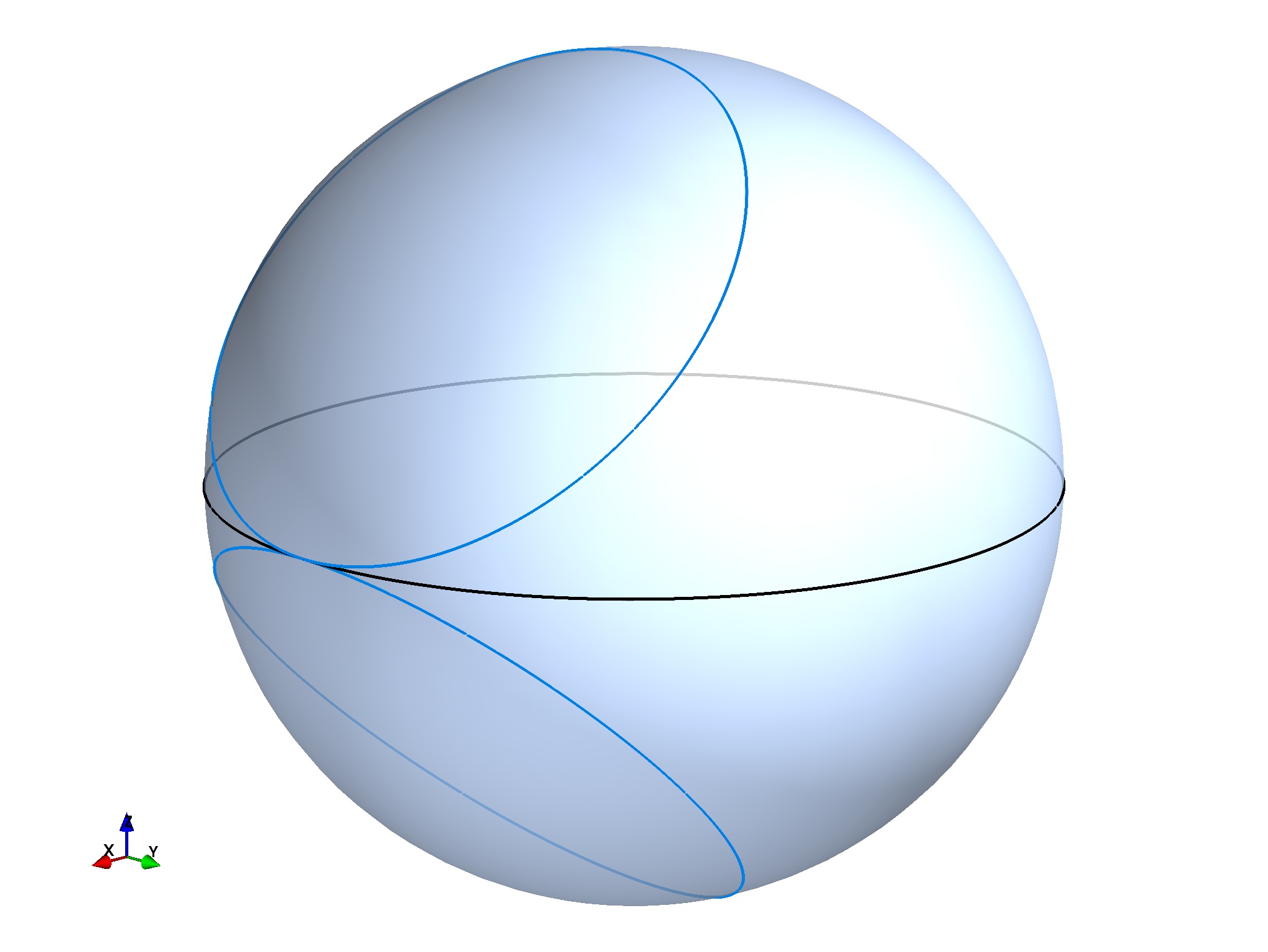}
\caption{\footnotesize Ballistic curves and cubics on the sphere. For given initial position and velocity, there are two types of trajectories that are, simultaneously, projections of geodesics on the rotation group (ballistic curves) \emph{and} Riemannian cubics. The two trajectories are shown for initial position $(1, 0, 0)$ and initial velocity parallel to the $y$-axis. In black a unit speed trajectory along the equator corresponding to the projection of a horizontal geodesic on the rotation group. The blue curves are the circular unit-speed trajectories of radius $\frac{1}{\sqrt{2}}$  described in Corollary \ref{Corollary_ballistic_example} and Remark \ref{Remark_sphere_ballistic}. }
\label{Figure_ballistic_sphere}
\end{SCfigure}
\subsection{Second-order Lagrange--Poincar\'e reduction}
To discuss second-order reduction, we introduce the quotient $T^{(2)}G/G_a$ of $T^{(2)}G$ by the natural action of $G_a$, whose elements will be denoted by $[g, \dot{g}, \ddot{g}] \in T^{(2)}G/G_a$. We will make use of the bundle diffeomorphism $\alpha^{(2)}_{\mathcal{A}}: T^{(2)}G/G_a \rightarrow T^{(2)}Q \times_Q 2\widetilde{\mathfrak{g}}_a\,,$
\begin{equation}\label{Bundle_diffeomorphism_alpha_2}
 \left[g, \dot{g}, \ddot{g}\right] \mapsto  (q, \dot{q}, \ddot{q}) \times \left[g, \mathcal{A}(\dot{g})\right] \oplus \frac{D^{\mathcal{A}}}{Dt} \left[g, \mathcal{A}(\dot{g})\right]    = (q, \dot{q}, \ddot{q}) \times \left[g, \mathcal{A}(\dot{g})\right] \oplus \left[g, \partial_t\mathcal{A}(\dot{g})\right],
\end{equation}
see \cite{Gay-BalmazEtAl2011HOLPHP}.
Here, we defined $q(t) := \Pi\left(  g(t)\right) $, where $g(t)$ is a curve representing $[g, \dot{g}, \ddot{g}]$, that is, 
$(q, \dot{q}, \ddot{q}) = T^{(2)}_g\Pi(g, \dot{g}, \ddot{g})$. Denoting the right-invariant velocity by $\xi = TR_{g^{-1}} \dot{g} $ and using the definition \eqref{PFB_mechanical_connection} of 
$\mathcal{A}$ together with \eqref{Ad_relations_projections}, 
this becomes
\begin{equation}\label{alpha_2_bundle_map}
  [g, \dot{g}, \ddot{g}] \mapsto (q, \dot{q}, \ddot{q}) \times [g, \operatorname{Ad}_{g^{-1}}\operatorname{V}_q(\xi)] \oplus [g,\operatorname{Ad}_{g^{-1}} \operatorname{V}_q(\dot{\xi})].
\end{equation}
\begin{remark}{\rm
In the example $G = SO(3)$, $Q = S^2$, working with the conventions laid out in  Remark \ref{Remark_Conventions}, the space of reduced variables $T^{(2)}S^2 \times_Q 2\widetilde{\mathfrak{so}(3)}_a$ can be identified with $T^{(2)}S^2 \times \mathbb{R}^2$. The map $\alpha^{(2)}_{\mathcal{A}}$  is
\begin{equation}\label{Bundle_diffeomorphism_alpha_2_sphere}
  \alpha^{(2)}_{\mathcal{A}}: T^{(2)}SO(3)/S^1 \rightarrow T^{(2)}S^2 \times \mathbb{R}^2, \quad [\Lambda, \dot{\Lambda}, \ddot{\Lambda}] \mapsto \big(\mathbf{x}, \dot{\mathbf{x}}, \ddot{\mathbf{x}}, (\Lambda^{-1}\dot{\Lambda})_3, \partial_t(\Lambda^{-1}\dot{\Lambda})_3\big).
\end{equation}
}
\end{remark} 
\paragraph{The reduced Lagrangian.} For $L: T^{(2)}G \rightarrow \mathbb{R}$ a $G_a$-invariant Lagrangian we define the reduced Lagrangian $\ell$ by $L = \ell \circ \alpha^{(2)}_{\mathcal{A}}$.  Consider the Lagrangian for Riemannian cubics $L = \frac{1}{2} \left\|\frac{D}{Dt} \dot{g}\right\|^2_g$ and note the following equalities,
\begin{align}
  L(g, \dot{g}, \ddot{g}) &= \frac{1}{2} \left\|\frac{D}{Dt}\dot{g}\right\|_g^2 = \frac{1}{2} \left\|\dot{\xi}\right\|_{\mathfrak{g}}^2 
= \frac{1}{2} \left\|\operatorname{H}_q\big(\dot{\xi}\big)\right\|_{\mathfrak{g}}^2 + \frac{1}{2} \left\|\operatorname{V}_q\big(\dot{\xi}\big)\right\|_{\mathfrak{g}}^2\nonumber \\ &= \frac{1}{2} \left\|\big(\dot{\xi}\big)_Q (q)\right\|_q^2 + \frac{1}{2} \left\|\operatorname{V}_q\big(\dot{\xi}\big)\right\|_{\mathfrak{g}}^2 \nonumber \\ &= \frac{1}{2} \left\|\frac{D}{Dt}\dot{q}  - \nabla_{\dot{q}} \left(\operatorname{V}_q (\xi)\right)_Q\right\|_q^2 + \frac{1}{2} \left\|\operatorname{V}_q\big(\dot{\xi}\big)\right\|_{\mathfrak{g}}^2, \label{Lagrangian_rewritten_PFB}
\end{align}
where we used the right-invariance of $L$, the definition of the normal metric, and part ${\bf (iii)}$ of Proposition \ref{Covariant_derivative_main_proposition}.  It follows from 
\eqref{alpha_2_bundle_map} and \eqref{Lagrangian_rewritten_PFB} that the reduced Lagrangian $\ell: T^{(2)}Q \times_Q 2 \tilde{\mathfrak{g}}_a \rightarrow \mathbb{R}$ reads
\begin{equation}\label{Reduced_spline_Lagrangian_1}
\ell (q, \dot{q}, \ddot{q}, \sigma , \dot{\sigma} ) = \frac{1}{2} \left\|\frac{D}{Dt} \dot{q} - \nabla_{\dot{q}}\bar{\sigma}_Q\right\|^2_q + \frac{1}{2} \|\dot{\sigma} \|^2_{\tilde{\mathfrak{g}}_a},
\end{equation}
where we recall $\dot{\sigma} := \frac{D^{\mathcal{A}}}{Dt} \sigma$. The reduced Lagrangian therefore measures the deviations from the geodesic Lagrange--Poincar\'e equations \eqref{Geodesic_LP_equations}.
\begin{remark}{\rm
In the example $G = SO(3)$, $Q = S^2$, the reduced Lagrangian $\ell: T^{(2)}S^2 \times \mathbb{R}^2 \rightarrow \mathbb{R}$ is
\begin{equation}
  (\mathbf{x}, \dot{\mathbf{x}}, \ddot{\mathbf{x}}, \sigma, \dot{\sigma}) \mapsto \frac{1}{2} \left\|D_t\dot{\mathbf{x}} - \sigma \mathbf{x} \times \dot{\mathbf{x}}\right\|_{\mathbf{x}}^2 + \frac{1}{2} \dot{\sigma}^2 = \frac{1}{2}\left\|D_t\dot{\mathbf{x}}\right\|_{\mathbf{x}}^2 - \sigma D_t\dot{\mathbf{x}} \cdot (\mathbf{x} \times \dot{\mathbf{x}}) + \frac{1}{2} \sigma^2\left\|\dot{\mathbf{x}}\right\|_{\mathbf{x}}^2 + \frac{1}{2}\dot{\sigma}^2, \nonumber
\end{equation}
where the norm $\|\cdot \|_{\mathbf{x}}$ is evaluated as the standard Euclidean norm. 
}
\end{remark} 
\paragraph{Coupling.}
The reduced Lagrangian couples the horizontal and vertical parts of the motion through the term $\nabla_{\dot{q}}\bar{\sigma}_Q$. This explains the absence of a general horizontal lifting property for Riemannian cubics studied in Section \ref{Sec_Riemannian_submersion_viewpoint}. Indeed, let us instead define the Lagrangian
\begin{equation}
  L_{KK}: T^{(2)}G \rightarrow \mathbb{R}, \quad (g, \dot{g}, \ddot{g}) \mapsto \frac{1}{2} \left\|\frac{D}{Dt}T\Pi(\dot{g})\right\|^2_{\Pi(g)} + \frac{1}{2} \left\|\partial_t\mathcal{A}(\dot{g})\right\|^2_{\mathfrak{g}}
\end{equation}
with reduced Lagrangian 
\begin{equation}\label{L_KK_reduced}
  \ell_{KK}: T^{(2)}Q \times_Q 2 \tilde{\mathfrak{g}}_a \rightarrow \mathbb{R}, \quad   (q, \dot{q}, \ddot{q}, \sigma, \dot{\sigma})  \mapsto \frac{1}{2} \left\|\frac{D}{Dt}\dot{q}\right\|^2_q + \frac{1}{2} \left\|\dot{\sigma}\right\|^2_{\tilde{\mathfrak{g}}_a}.
\end{equation}
The Lagrangian $L_{KK}$ belongs to a class of Lagrangians that were studied in \cite{Gay-BalmazEtAl2011HOLPHP} as natural second-order generalizations of the Kaluza-Klein Lagrangian. The reduced Lagrangians $\ell$ in \eqref{Reduced_spline_Lagrangian_1} and $\ell_{KK}$ in \eqref{L_KK_reduced} differ by the coupling term $\nabla_{\dot{q}}\bar{\sigma}_Q$. The decoupled form of $\ell_{KK}$ leads to a general horizontal lifting theorem. Namely, any horizontal lift $g(t)$ to $G$ of a cubic spline $q(t)$ on $Q$ is a critical point of the action $\int L_{KK} \, dt$.
\paragraph{Lagrange--Poincar\'e equations.} We now compute the Lagrange--Poincar\'e equations. Taking $ \varepsilon $-variations and defining $V_q:= \frac{D}{Dt} \dot{q} - \nabla _{\dot q} \bar\sigma_Q$, we have, for the first term of \eqref{Reduced_spline_Lagrangian_1},
\[
\delta \int_0^1\frac{1}{2} \left\|\frac{D}{Dt} \dot{q} - \nabla _{\dot q} \bar\sigma_Q\right\|^2_q dt=\int_0^1 \gamma _Q \left( V_q,\frac{D}{D\varepsilon}\frac{D}{Dt} \dot{q}- \frac{D}{D\varepsilon} \nabla _{\dot q} \bar\sigma_Q\right) dt.
\]
We then compute
\begin{align*}
\frac{D}{D\varepsilon}\frac{D}{Dt} \dot{q}= \frac{D}{Dt}\frac{D}{D\varepsilon } \dot{q}+R( \delta q, \dot q) \dot q
\end{align*} 
and
\begin{align*} 
\frac{D}{D\varepsilon} \nabla _{\dot q} (\bar\sigma _1)_Q&= \frac{D}{D\varepsilon} \left( \frac{D}{Dt} \bar\sigma_Q(q) - (\partial _t \bar \sigma) _Q(q) \right) \\
&= \frac{D}{Dt} \frac{D}{D\varepsilon} \bar\sigma_Q(q)+R\left(  \delta q, \dot q\right) \bar\sigma_Q(q) - \frac{D}{D\varepsilon}  (\partial _t \bar \sigma) _Q(q)\\
&=\frac{D}{Dt} \left( \nabla _{ \delta q}  \bar\sigma_Q+ \left( \delta \bar \sigma \right) _Q (q) \right) +R\left(  \delta q, \dot q\right) \bar\sigma_Q(q) - \left( \delta \partial _t \bar \sigma \right) _Q(q) - \nabla _{ \delta q} \left( \partial _t \bar\sigma\right) _Q
\end{align*}
Lemma \ref{Lemma_delta_sigma_bar} shows that
\[
\left( \delta \bar \sigma \right) _Q (q)= \left(i(\delta \sigma) \right) _Q (q)+ \left( F( \delta q, \sigma ) \right) _Q(q)=\left( F( \delta q, \sigma ) \right) _Q(q),
\]
since $\delta \sigma:= \left.\frac{D}{D\varepsilon} \right|_{ \varepsilon =0}\sigma _\varepsilon \in \mathfrak{g}  ^V _q$.  So we have
\begin{align*} 
&\int_0^1\gamma _Q \left(V _q , \frac{D}{Dt} \left( \delta \bar \sigma \right) _Q(q) \right) dt = -\int_0^1 \gamma _Q \left( \frac{D}{Dt} V_q,\left( F( \delta q, \sigma ) \right) _Q(q)\right)  dt\\
&\quad = -\int_0^1 \gamma  \left( \bar{\mathbf{J}} \left( \frac{D}{Dt} V_q \right) , F( \delta q, \sigma ) \right) dt=-\int_0^1 \gamma_Q   \left(F^T_{ \sigma } \bar{\mathbf{J}} \left( \frac{D}{Dt} V_q \right) ,\delta q\right) dt
\end{align*} 
and
\begin{align*} 
&\int_0^1 \gamma _Q \left(V _q , ( \delta \partial _t \bar\sigma )_Q (q) \right) dt= \int_0^1 \gamma \left( \bar{ \mathbf{J} }(V_q), \partial _t \delta \bar \sigma \right) =-\int_0^1 \gamma \left(\partial _t  \bar{ \mathbf{J} }(V_q), \delta \bar \sigma \right)\\
& \quad =-\int_0^1 \gamma \left(\partial _t  \bar{ \mathbf{J} }(V_q), i(\delta \sigma )+F( \delta q, \sigma )\right)=-\int_0^1 \bar{\gamma} \left(i^T_q \partial _t  \bar{ \mathbf{J} }(V_q), \delta \sigma \right) +\gamma _Q \left( F_\sigma^T \partial _t  \bar{ \mathbf{J} }(V_q), \delta q\right),
\end{align*} 
where $F_{ \sigma _q}^T: \mathfrak{g}  \rightarrow T_qQ$ is the transpose of the map $F_ {\sigma_q} :T_qQ \rightarrow \mathfrak{g}  $, $F_ {\sigma _q}(v_q):= F(v _q , \sigma _q )$, and $i_q^T: \mathfrak{g}  \rightarrow(\tilde{ \mathfrak{g}  }_a)_q$ is the transpose of the map $i_q:(\tilde{ \mathfrak{g}  }_a)_q \rightarrow \mathfrak{g}  $  (the restriction of \eqref{Ad_map_PFB} to the fiber $(\tilde{ \mathfrak{g}  }_a)_q$ of $\tilde{ \mathfrak{g}  }_a$ at $q$). 

For the second term of \eqref{Reduced_spline_Lagrangian_1}, we have
\[
\delta \int_0^1\frac{1}{2} \|\dot{\sigma} \|^2_{\tilde{\mathfrak{g}}_a}dt= \int_0^1 \bar \gamma \left( \dot{\sigma} , \frac{D^ \mathcal{A} }{D\varepsilon} \dot{\sigma} \right) dt.
\]

Using the variations
\[
\delta \sigma =\frac{D^ \mathcal{A} }{Dt} \eta -[ \sigma , \eta ]+\widetilde{\mathcal{B}}( \delta q, \dot q)\in \tilde{ \mathfrak{g}  }_a
,\quad 
\delta \dot{\sigma} = \frac{D^{\mathcal{A}}}{Dt} \delta \sigma - [ \widetilde{ \mathcal{B} }(\dot q , \delta q), \sigma ]\in \tilde{ \mathfrak{g}  }_a,
\]
and the formula
\[
\frac{d}{dt} \mathbf{J} \left( \alpha (t) \right) - \mathbf{J} \left( \frac{D}{Dt} \alpha (t) \right) = \mathcal{F} ^ \nabla ( \alpha (t) , \dot q (t) ),
\]
where $\left\langle  \mathcal{F} ^ \nabla ( \alpha _q , v _q ), \eta \right\rangle := \left\langle \alpha _q , \nabla _{ v _q } \eta _Q \right\rangle $, (see \cite{GBHoRa2010})  we get the equations 
\begin{align*} 
&\frac{D^2}{Dt^2}V_q+ \nabla \bar \sigma _Q^T \cdot \frac{D}{Dt} V_q+ \nabla ( \partial _t \bar{ \sigma })_Q^T \cdot V _q +R( V_q , \dot q- \bar{ \sigma }_Q(q)) \dot q \\
&\qquad  + \left\langle \frac{D^ \mathcal{A} }{Dt} \dot{\sigma} + \operatorname{ad}^\dagger_ \sigma \dot{\sigma} +i_q^T \partial _t {\bar{\mathbf{J}}} (V_q), \mathbf{i} _{\dot q} \widetilde{ \mathcal{B} } \right\rangle ^\sharp = F_ \sigma ^T {\left(\mathcal{F} ^ \nabla \left( V_q^\flat, \dot q \right)\right)^\sharp} \\
&\left(  \frac{D^ \mathcal{A} }{Dt} + \operatorname{ad}^\dagger_ \sigma \right)  \left( \frac{D^ \mathcal{A} }{Dt} \dot{\sigma}  +i_q^T \partial _t \bar{\mathbf{J}} (V_q)\right) =0,
\end{align*} 
where we recall that $V_q:= \frac{D}{Dt} \dot{q} - \nabla _{\dot q} \bar\sigma_Q\in TQ$.

Using \eqref{B_tilde} these  equations can be rewritten as
\begin{align}
  &\frac{D^3}{Dt^3}\dot{q} + R\left(\frac{D}{Dt}\dot{q}, \dot{q}\right) \dot{q} = \frac{D^2}{Dt^2} \nabla_{\dot{q}}\bar{\sigma}_Q -  \nabla \bar \sigma _Q^T \cdot \frac{D}{Dt} V_q -  \nabla ( \partial _t \bar{ \sigma })_Q^T \cdot V _q + R\left(\frac{D}{Dt} \dot{q}, \bar{\sigma}_Q(q)\right) \dot{q} \nonumber\\
&\qquad + R(\nabla_{\dot{q}}\bar{\sigma}_Q, \dot{q} - \bar{\sigma}_Q(q)) \dot{q}
+ \nabla_{\dot{q}} \left(i\left(\ddot{\sigma} + \operatorname{ad}^\dagger_\sigma \dot{\sigma} + i_q^T \partial _t \bar{\mathbf{J}} (V_q)\right)\right)_Q +  F_ \sigma ^T \left(\mathcal{F} ^ \nabla \left( V_q^\flat, \dot q \right)\right)^\sharp \label{LP_final_1} \\
&\left(  \frac{D^ \mathcal{A} }{Dt} + \operatorname{ad}^\dagger_ \sigma \right)  \left(\ddot{\sigma}  +i_q^T \partial _t \bar{\mathbf{J}} (V_q)\right) =0. \label{LP_final_2}
\end{align}
These equations are the second-order analogue of \eqref{Geodesic_LP_equations}. The left hand side of the first one is the equation for Riemannian cubics on $Q$. Hence the right hand side of \eqref{LP_final_1} is the obstruction for the projected curve to be a cubic. For symmetric spaces, solutions to \eqref{LP_final_1}-\eqref{LP_final_2} with vanishing obstruction include in particular the curves characterized in Theorem \ref{Lifting_symmetric_spaces}, but also the special geodesics on the group that project to the ballistic curves of Section \ref{Sec-Cubics_and_ballistic_curves}.  
\begin{remark}{\rm
For $G = SO(3)$, $Q = S^2$, the Lagrange--Poincar\'e equations are computed as
\begin{align}\label{SO3_S2} 
     &\frac{D^3}{Dt^3}\dot{\mathbf{x}} + R\left(\frac{D}{Dt}\dot{\mathbf{x}}, \dot{\mathbf{x}}\right) \dot{\mathbf{x}}= \sigma^2\ddot{\mathbf{x}}^\perp + 2\sigma \dot{\sigma}\dot{\mathbf{x}} + \ddot{\sigma}(\mathbf{x}\times \dot{\mathbf{x}}) + 3\dot{\sigma}(\mathbf{x}\times \ddot{\mathbf{x}})\nonumber  \\ &\qquad \qquad +\sigma\big(2[  \mathbf{x}\times \dddot{\mathbf{x}} + (\dot{\mathbf{x}}\times \ddot{\mathbf{x}})^\perp] + \|\dot{\mathbf{x}}\|^2(\mathbf{x} \times \dot{\mathbf{x}})\big) - \alpha(\mathbf{x}\times \dot{\mathbf{x}}) \nonumber \\
&-\mathbf{x} \cdot (\dot{\mathbf{x}} \times \ddot{\mathbf{x}}) + \sigma\|\dot{\mathbf{x}}\|^2 - \ddot{\sigma} = \alpha\,, \nonumber
\end{align}
for a constant $\alpha \in \mathbb{R}$. Here we denoted by $\mathbf{v}^\perp = \mathbf{v} - \mathbf{x}(\mathbf{x}\cdot \mathbf{v})$ the orthogonal projection of $\mathbf{v}$ onto the tangent plane to $S^2$ at $\mathbf{x}$.
}
\end{remark}
\section{Summary and Outlook}
This paper has investigated Riemannian cubics in object manifolds with normal metrics and in particular their relation to Riemannian cubics on the Lie group of transformations.

Our starting point was the definition, in Section \ref{Section-Geometric_Setting}, of necessary concepts and a treatment of covariant derivatives for normal metrics in Section \ref{Section-Covariant_derivatives}. The derivation of the Euler-Lagrange equation for cubics from the viewpoint of normal metrics followed in Section \ref{Section-Cubic_splines_for_normal_metrics}. The examples of Lie groups with invariant metrics and symmetric spaces were discussed in detail, and the relation with equations previously present in the literature was clarified. The new form of the equation was seen to lend itself to the analysis of horizontal lifts of cubics, due to the appearance of the horizontal generator of curves.

Section \ref{Sec-Horizontal_lifts} proceeded with this line of investigation by deriving several results about horizontal lifting properties of cubics. For symmetric spaces a complete characterization was achieved of the cubics that can be lifted horizontally to cubics on the group of isometries. In rank-one symmetric spaces this selects geodesics composed with cubic polynomials in time. The section continued with a treatment of the corresponding question in the context of Riemannian submersions. 
In Section \ref{Section-Extended_analysis_Reduction_by_isotropy_subgroup} certain non-horizontal geodesics on the group were shown to project to cubics in the object manifold. A complete characterization of such geodesics was given in the sense of Theorem \ref{Theorem_ballistic}. For the unit sphere acted on by the rotation group the corresponding projections were seen to be the circles of radius ${1}/{\sqrt{2}}$. A discussion of Lagrange--Poincar\'e reduction of cubics led to reduced equations that identified the obstruction for projections of cubics to be cubics in the object manifold.

Future research should seek to extend the results found here for horizontal lifts of cubics in symmetric spaces to more general situations. A first step in this direction has been taken in the context of Riemannian submersions in Section \ref{Sec_Riemannian_submersion_viewpoint}. However, a more complete characterization of the cubics that lift horizontally to cubics may provide a wider class of applications. A similar remark holds for the analysis of the special ballistic curves that satisfy the equation for cubics. All of these problems are tightly linked to the obstruction term in the Lagrange--Poincar\'e equations, particularly on the right hand side of \eqref{LP_final_1}. Hence, one of the main tasks ahead is to deepen the understanding of that obstruction term, and thereby determine additional situations in which it vanishes.
\subsection*{Acknowledgements}
We thank B. Doyon, P. Michor, L. Noakes and  A. Trouv\'e for encouraging comments and insightful remarks during the course of this work. DDH, DMM and FXV are grateful for partial support by the Royal Society of London Wolfson Research Merit Award  and the European Research Council Advanced Grant. FGB has been partially supported by a ``Projet Incitatif de Recherche'' contract from the Ecole Normale Superieure de Paris. TSR was partially supported by Swiss NSF grant 200020-126630 and by the government grant of the Russian Federation for support of research projects implemented by leading scientists, Lomonosov Moscow State University under the agreement No. 11.G34.31.0054. 
\small{
\bibliographystyle{alpha}
\bibliography{myrefs-FGBDM}}

\end{document}